\newtheorem{Th}{Theorem}[section]
\newtheorem{Lem}[Th]{Lemma}
\newtheorem{Rem}[Th]{Remark}
\newenvironment{altproof}[1]
{\noindent%\addvspace{0.3cm}
{\em Proof of {#1}}.}
{\nopagebreak\mbox{}\hfill $\Box$\par\addvspace{0.5cm}}
\newcommand{\wt}{\widetilde}
   \newcommand{\vp}{\varphi}
   \newcommand{\eps}{\varepsilon}
   \def\div{\mathop{\mathrm{div}}}
   \def\supp{\mathrm{supp}}
   \def\d{\diamond}
   \def\N{\mathbb{N}}
   \def\R{\mathbb{R}}
   \def\curl{\mathrm{curl}}
   \def\V{\mathcal{V}}
   \def\W{\mathcal{W}}
\def\rh{\rightharpoonup}
\def\rn{\mathbb{R}^N}
\newcommand{\cC}{{\mathcal C}}
\newcommand{\cD}{{\mathcal D}}
\newcommand{\cM}{{\mathcal M}}
\newcommand{\cN}{{\mathcal N}}
\newcommand{\cO}{{\mathcal O}}
\newcommand{\cS}{{\mathcal S}}
\newcommand{\cV}{{\mathcal V}}
\newcommand{\cW}{{\mathcal W}}
\newcommand{\Om}{\Omega}
\def\r{\mathbb{R}}
\def\r3{\mathbb{R}^3}
\def\rn{\mathbb{R}^N}
\def\eps{\varepsilon}
\def\rh{\rightharpoonup}
\def\io{\int_{\Omega}}
\def\ir3{\int_{\r3}}
\def\vp{\varphi}
\def\wt{\widetilde}
\def\ol{\overline}
\def\lam{\lambda}
\def\lam1{\lambda_1}
\def\d1p{\mathcal{D}^{1,p}}
\def\curlop{\nabla\times}
\newcommand{\weakto}{\rightharpoonup}
\newcommand{\pa}{\partial}
\newcommand{\tu}{\widetilde{u}}
\newcommand{\tv}{\widetilde{v}}
\newcommand{\tcV}{\widetilde{\cV}}
\numberwithin{equation}{section}
\title[Nonlinear curl-curl problem with critical exponent]{
A Sobolev-type inequality for the curl operator and ground states for the curl-curl equation with critical Sobolev exponent}
\author[J. Mederski]{Jaros\l aw Mederski}
\author[A. Szulkin]{Andrzej Szulkin}
\address[J. Mederski]{\newline\indent
	{%\newline\indent 
		Institute of Mathematics,
		\newline\indent 
		Polish Academy of Sciences,
		\newline\indent 
		ul. \'Sniadeckich 8, 00-656 Warsaw, Poland
		\newline\indent 
		and
		\newline\indent 
		CRC 1173 Wave phenomena: Analysis and Numerics,
		\newline\indent 
		Departement of Mathematics,
		Karlsruhe Institute of Technology (KIT), 
		\newline\indent 
		D-76128 Karlsruhe, Germany}
}
\email{\href{mailto:jmederski@impan.pl}{jmederski@impan.pl}}
\address[A. Szulkin]{\newline\indent 
	Department of Mathematics,
	\newline\indent 
	Stockholm University,
	\newline\indent 
	106 91 Stockholm, Sweden
	}
\email{\href{mailto:andrzejs@math.su.se}{andrzejs@math.su.se}}
\subjclass[2010]{Primary: 35Q60; Secondary: 35Q61, 35J20}
\keywords{Sharp constant, Sobolev inequality, time-harmonic Maxwell equations, ground state, variational methods, strongly indefinite functional, curl-curl problem}
\begin{document}

\begin{abstract} 
Let $\Omega\subset \mathbb{R}^3$ be a Lipschitz domain and let $S_\mathrm{curl}(\Omega)$ be the largest constant such that 
$$
\int_{\mathbb{R}^3}|\nabla\times u|^2\, dx\geq S_{\mathrm{curl}}(\Omega) \inf_{\substack{w\in W_0^6(\mathrm{curl};\mathbb{R}^3)\\ \nabla\times w=0}}\Big(\int_{\mathbb{R}^3}|u+w|^6\,dx\Big)^{\frac13}
$$
for any $u$ in $W_0^6(\mathrm{curl};\Omega)\subset W_0^6(\mathrm{curl};\mathbb{R}^3)$ where $W_0^6(\mathrm{curl};\Omega)$ is the closure of $\mathcal{C}_0^{\infty}(\Omega,\mathbb{R}^3)$ in $\{u\in L^6(\Omega,\mathbb{R}^3): \nabla\times u\in L^2(\Omega,\mathbb{R}^3)\}$ with respect to the norm $(|u|_6^2+|\nabla\times u|_2^2)^{1/2}$. We show that  $S_{\mathrm{curl}}(\Omega)$ is strictly larger than the classical Sobolev constant $S$ in $\mathbb{R}^3$. Moreover, $S_{\mathrm{curl}}(\Omega)$ is independent of $\Omega$  and is attained by a ground state solution to the curl-curl problem
$$
\nabla\times (\nabla\times u) = |u|^4u
$$
if  $\Omega=\mathbb{R}^3$. With the aid of these results 
we also investigate ground states of the Brezis-Nirenberg-type problem for the curl-curl operator in a bounded domain $\Omega$
$$\nabla\times (\nabla\times u) +\lambda u = |u|^4u\quad\hbox{in }\Omega$$
with the so-called metallic boundary condition
$\nu\times u=0$ on $\partial\Omega$, where $\nu$ is the exterior normal to $\partial\Omega$. 
\end{abstract}

\maketitle

\section{Introduction} \label{sec:intro}

Sobolev-type inequalities have been widely studied by a large number of authors and the best Sobolev constants play an important role e.g. in the theory of partial differential equations, differential geometry, isoperimetric inequalities as well as in mathematical physics, see e.g. \cite{Lieb,Aubin,Talenti}. In particular, if $\Om$ is a domain in $\r3$, then the best constant $S$ in the Sobolev inequality
\begin{equation} \label{se}
\int_{\Om}|\nabla u|^2\,dx\geq S \Big(\int_{\Om}|u|^6\,dx\Big)^{\frac13}\quad\hbox{ for }u\in\cD^{1,2}(\Om)
\end{equation}
has been computed explicitly by Talenti \cite{Talenti} and as is well-known, it is achieved (i.e., equality holds) if and only if $\Om=\r3$ and $u$ is the Aubin-Talenti instanton $U_{\eps,y}(x) := 3^{1/4}(\eps^2+|x-y|^2)^{-1/2}$, see \cite{Aubin,Talenti}. When $\eps=1$, this is the unique (up to translations in $\r3$) positive solution to the equation $-\Delta u=|u|^4u$ in $\cD^{1,2}(\R^3)$ and a ground state, i.e.  a minimizer for the energy functional among all nontrivial solutions.

 The aim of this work is to perform a similar analysis for the curl operator $\curlop(\cdot)$. This is challenging from the mathematical point of view and important in mathematical physics; such operator appears e.g. in Maxwell equations as well as in Navier-Stokes problems \cite{Dautray,Monk,Girault}. Finding a formulation in the spirit of \eqref{se}, but involving the curl operator, is not straightforward and there are several essential difficulties as we shall see later.                                                                                                                   
 
 For instance, the kernel of $\curlop(\cdot)$ is of infinite dimension since $\curlop(\nabla \vp)=0$ for all $\vp\in\cC^2(\Om)$. Hence the inequality \eqref{se} with $\nabla u$ replaced by $\curlop u$ would hold for all $u\in \cC_0^\infty(\r3,\r3)$ only if $S=0$. This makes it necessary to introduce a Sobolev-like constant in a different way which we now proceed to do. 

 Let $\Om$ be a Lipschitz domain in $\r3$ and for $2\le p\le 6$, let
$$
W^p(\curl;\Om) := \{u\in L^p(\Om,\R^3): \curlop u\in L^2(\Om,\R^3)\}.
$$
This is a Banach space if provided with the norm
$$
\|u\|_{W^p(\curl;\Om)} := \left(|u|^2_p+|\curlop u|^2_2\right)^{1/2}.
$$
Here and in the sequel $|\cdot|_q$ denotes the $L^q$-norm for $q\in[1,\infty]$. 
We also define
\begin{equation} \label{wzerosix}
W^p_0(\curl;\Om) := \text{closure of } \cC^{\infty}_0(\Om,\R^3) \text{ in }W^p(\curl;\Om).
\end{equation}
If $\Om=\R^3$, these two spaces coincide, see Lemma \ref{density}. Although results of this kind are well known, we provide a proof for the reader's convenience. The spaces $W^2(\curl;\Om)$ and $W^2_0(\curl;\Om)$ are studied in detail in \cite{Dautray,KirschHettlich,Monk}.
Extending $u\in W^p_0(\curl;\Om)$ by 0 outside $\Om$ we may assume $W^p_0(\curl;\Om)\subset W^p_0(\curl;\r3)$.
Denote the kernel of $\curlop (\cdot )$ in $W^6_0(\curl;\r3)$ by
$$\cW:=\{w\in W^6_0(\curl;\r3): \curlop w=0\}.$$
Let $S_{\curl}(\Om)$ be the largest possible constant such that the inequality
\begin{equation}\label{eq:neq}
\int_{\r3}|\curlop u|^2\, dx\geq S_{\curl}(\Om)\inf_{w\in \cW}\Big(\int_{\r3}|u+w|^6\,dx\Big)^{\frac13}
\end{equation}
holds for any $u\in W^6_0(\curl;\Om)\setminus \W$. Inequality \eqref{eq:neq} is in fact (trivially) satisfied also for $u\in W^6_0(\curl;\Om)\cap\cW$ because then both sides are zero. Note that here $u$ but not necessarily $w$ is supported in $\Om$.  It is not a priori clear that $S_{\curl}(\Om)$ is positive or that it is independent of $\Om$. That this is the case follows from Theorems \ref{TheoremS_curl} and \ref{Th:main1}(a) below:

\begin{Th}\label{TheoremS_curl}
$S_{\curl}(\Om) = S_\curl$ where $S_\curl := S_\curl(\r3)$.
\end{Th}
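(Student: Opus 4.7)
The plan is to establish the two inequalities $S_{\curl}(\Om)\ge S_{\curl}$ and $S_{\curl}(\Om)\le S_{\curl}$ separately. The first is essentially a tautology: since $W^6_0(\curl;\Om)\subset W^6_0(\curl;\r3)$, any constant $C$ for which \eqref{eq:neq} holds uniformly over $W^6_0(\curl;\r3)$ also holds uniformly over the subspace $W^6_0(\curl;\Om)$, so the supremum of admissible $C$'s can only be larger for the smaller space.

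For the reverse inequality my main tool will be the scale/translation invariance of the Rayleigh-type quotient
\[
R(u):=\frac{|\curlop u|_2^2}{\inf_{w\in\cW}|u+w|_6^2}.
\]
A direct change of variables under the critical scaling $u_\lambda(x):=\lambda^{1/2}u(\lambda x)$ gives $|\curlop u_\lambda|_2=|\curlop u|_2$ and $|u_\lambda|_6=|u|_6$, and the identity $\curlop(w_\lambda)=\lambda^{3/2}(\curlop w)(\lambda\cdot)$ shows that the same scaling maps $\cW$ bijectively onto itself. Replacing $w$ by $w_\lambda$ in the infimum yields $\inf_{w\in\cW}|u_\lambda+w|_6=\inf_{w\in\cW}|u+w|_6$, so $R(u_\lambda)=R(u)$; translations leave $R$ invariant in the same way.

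Combining this invariance with density will finish the argument. Fix $\eps>0$ and pick $u\in W^6_0(\curl;\r3)\setminus\cW$ with $R(u)\le S_{\curl}+\eps$. Approximate $u$ by $u_n\in\cC^\infty_0(\r3,\r3)$ in the $W^6(\curl)$ norm; since $v\mapsto\inf_{w\in\cW}|v+w|_6$ is the quotient seminorm on $L^6/\cW$ and is therefore $1$-Lipschitz on $L^6$, and since $|\curlop u_n|_2\to|\curlop u|_2$, one gets $R(u_n)\to R(u)$. Now fix a ball $B_r(x_0)$ with $\overline{B_r(x_0)}\subset\Om$; by composing a translation with a suitable dilation I can map each compactly supported $u_n$ to some $\tilde u_n\in\cC^\infty_0(B_r(x_0),\r3)\subset W^6_0(\curl;\Om)$ with $R(\tilde u_n)=R(u_n)$. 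Hence $S_{\curl}(\Om)\le R(\tilde u_n)\to R(u)\le S_{\curl}+\eps$, and letting $\eps\to 0$ gives the reverse inequality.

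The step I expect to require the most care is verifying the bijection $\cW\to\cW$ under dilation together with the $L^6$-closedness of $\cW$ that makes the denominator well-behaved (strictly positive when $u\notin\cW$ and Lipschitz in $u$). Closedness follows because $\curlop w=0$ is preserved under distributional limits of $L^6$ sequences and any $L^6$ vector field with zero curl automatically lies in $W^6_0(\curl;\r3)$ by the density lemma quoted in the excerpt; positivity of the infimum when $u\notin\cW$ is then immediate.
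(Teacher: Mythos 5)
Your proof is correct, but it takes a genuinely different and more elementary route than the paper. Both proofs obtain $S_\curl \le S_\curl(\Om)$ from the inclusion $W_0^6(\curl;\Om)\subset W_0^6(\curl;\r3)$, and both obtain the reverse inequality by rescaling compactly supported approximants into $\Om$ using the invariance of the problem under $T_{s,y}u := s^{1/2}u(s\cdot + y)$ (Lemma~\ref{isom}). The difference is in how the reverse inequality is closed. The paper detours through the action functional $J$ and the generalized Nehari manifold $\cN$: it starts from the ground state minimizer $u_0$ furnished by Theorem~\ref{Th:main1}(b), approximates $u_0$ by $u_n\in\cC_0^\infty$, rescales to $\wt u_n$, projects back onto $\cN$ via a scalar $t_n$, and controls the error with the Nehari-type inequality of Lemma~\ref{lemNehariineq} together with $J'(u_n)\to J'(u_0)=0$. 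You instead argue directly on the Rayleigh quotient $R(u)=|\curlop u|_2^2\big/\inf_{w\in\cW}|u+w|_6^2$, observing that the denominator is the quotient seminorm of $L^6(\r3,\r3)/\cW$ and hence $1$-Lipschitz, that $\cW$ is $L^6$-closed (your remark about distributional curl and Lemma~\ref{density} handles this), and that $R$ is scale/translation invariant. This makes $R$ continuous on $W^6(\curl;\r3)\setminus\cW$ and lets you pass from a near-minimizer of $S_\curl$ to compactly supported, rescaled test functions supported in $\Om$ with nearly the same quotient. Your argument has the advantage of being logically independent of Theorem~\ref{Th:main1}(b) and of the existence of a ground state; the paper's choice is presumably driven by wanting to reuse Lemma~\ref{lemNehariineq} and the same normalization $u+w(u)\in\cN$ in the proof of Theorem~\ref{th:Sbar}, which is handled in the same block. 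In short: correct, more modular, and a cleaner proof of this particular statement.
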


In the next result we show that $S_{\curl}$ is attained provided $\Om=\R^3$ and  the optimal function is (up to rescaling) a ground state solution to the curl-curl problem with  critical exponent. Existence of a ground state in this case has been an open question for some time. 
Let
\begin{equation}\label{eq:action}
	J(u):=\frac12\int_{\R^3} |\curlop u|^2\,dx- \frac16\int_{\R^3} |u|^6\, dx
	\end{equation}
and introduce the following constraint:
\begin{equation}\label{def:Neh}
	\cN:=\Big\{u\in W^6_0(\curl;\R^3)\setminus\cW: \int_{\R^3}|\curlop  u|^2=\int_{\R^3}|u|^6\, dx\hbox{ and }\div(|u|^4u)=0\Big\}.
	\end{equation}
As we shall see later, this set is a variant of a generalization of the Nehari manifold \cite{Nehari2} which may be found in \cite{Pankov} for a Schr\"odinger equation.
\begin{Th}\label{Th:main1}
(a) $S_\curl> S$. \\
(b)  $\inf_{\cN}J=\frac13S_{\curl}^{3/2}$ and is attained. Moreover, if $u\in\cN$ and $J(u)=\inf_{\cN} J$, then $u$ is a ground state solution to the equation
\begin{equation}
	\label{eq}
	\curlop (\curlop u)= |u|^4u\quad \hbox{in }\R^3
	\end{equation}
	and
	equality holds in \eqref{eq:neq} for this $u$. If $u$ satisfies equality in \eqref{eq:neq}, then there are unique $t>0$ and $w\in\cW$ such that $t(u+w)\in\cN$ and $J(t(u+w))=\inf_{\cN} J$.\\
\end{Th}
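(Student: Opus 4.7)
For part~(a), my plan is first to obtain the non-strict bound $S_\curl \geq S$ via Helmholtz decomposition, then upgrade it to strict inequality by a rigidity argument. Given $u \in W^6(\curl;\r3)$, I write $u = v + \nabla\varphi$ with $v \in \donetwo(\r3,\r3)$ divergence-free and $\curl v = \curl u$, so that $\nabla\varphi \in \cW$; then $\int|\curl u|^2 = \int|\nabla v|^2 \geq S|v|_6^2 \geq S\inf_{w\in\cW}|u+w|_6^2$. For the strict inequality I argue by contradiction. If $S_\curl = S$, then the divergence-free Helmholtz parts $v_n$ of an approximating sequence form a minimizing sequence for the vector Sobolev quotient subject to $\div v_n = 0$; after renormalizing by translations and dilations and applying Lions-type concentration-compactness in $\donetwo$, I would extract a nontrivial extremizer $v$ of the Sobolev quotient with $\div v = 0$. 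The Kato-type pointwise inequality $\bigl|\nabla|v|\bigr| \leq |\nabla v|$ then forces $|v|$ to be an Aubin-Talenti function and $v = a\,U_{\eps,y}$ for some constant $a \in \r3$; but $\div v = a\cdot\nabla U_{\eps,y}$ vanishes identically only when $a = 0$, a contradiction.

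For the lower bound in~(b), if $u \in \cN$ the Nehari identity gives $J(u) = \tfrac13\int|u|^6$, while $\div(|u|^4 u) = 0$ is precisely the Euler-Lagrange equation for the strictly convex problem $w\mapsto\int|u+w|^6$ on the affine set $u+\cW$, so $|u|_6 = \inf_{w\in\cW}|u+w|_6$. The curl-Sobolev inequality combined with $\int|\curl u|^2 = \int|u|^6$ yields $|u|_6^4 \geq S_\curl$ and hence $J(u) \geq \tfrac13 S_\curl^{3/2}$; equality here forces equality in~\eqref{eq:neq}. For the opposite inequality I take a minimizing sequence $\{\tilde u_n\}$ for the curl-Sobolev quotient, replace each $\tilde u_n$ by $u_n := \tilde u_n + w_n$ with $w_n \in \cW$ minimizing $|\tilde u_n + w|_6$ (existence by weak lower semicontinuity and coerciveness modulo gauge, uniqueness by strict $L^6$-convexity), and rescale to satisfy the Nehari identity; the rescaled function then lies in $\cN$ with $J$-value tending to $\tfrac13 S_\curl^{3/2}$.

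The main obstacle is attainment of $\inf_\cN J$. Starting from a minimizing sequence $\{u_n\} \subset \cN$, I exploit the translation and dilation invariance to renormalize so that concentration occurs at unit scale near the origin, and extract a weak limit $u$. Ruling out loss of compactness is where the strict inequality from~(a) is decisive: via a Brezis-Lieb splitting, any Aubin-Talenti-type bubble escaping at infinity or into a singular scale contributes at least $\tfrac13 S^{3/2} < \tfrac13 S_\curl^{3/2}$ to the limit energy, so bubbling cannot exhaust the total. Consequently $u \not\equiv 0$, lies in $\cN$, and achieves $J(u) = \tfrac13 S_\curl^{3/2}$. A standard Lagrange-multiplier computation shows the multipliers for the two constraints defining $\cN$ vanish at a minimizer, so $u$ is a free critical point and satisfies $\curl\curl u = |u|^4 u$; conversely, every nontrivial weak solution belongs to $\cN$ (Nehari by testing with itself, divergence-free from $\div\circ\curl = 0$), so $u$ is a ground state, and by the first step of~(b) equality holds in~\eqref{eq:neq} for this $u$. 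For the final assertion, given $u$ with equality in~\eqref{eq:neq}, I choose the unique minimizer $w^* \in \cW$ of $|u+w|_6$ and the unique $t > 0$ enforcing Nehari on $t(u+w^*)$; the equality chain then gives $t(u+w^*) \in \cN$ with $J(t(u+w^*)) = \tfrac13 S_\curl^{3/2}$, and uniqueness of $t$ and $w^*$ at each step yields the claimed uniqueness.
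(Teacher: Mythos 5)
Your argument for part (a) takes a genuinely different route from the paper: you attempt a direct proof by Lions-type compactness plus a Kato-inequality rigidity argument, whereas the paper derives (a) \emph{after} (b), using the explicit minimizer $u = v + w(v)$ from (b) to force equality in \eqref{eq:2}, and then arguing componentwise via \eqref{eq:1} that each $v_i$ would have to be a scalar instanton, contradicting $\div v = 0$. Your Kato-inequality rigidity is a valid alternative to the componentwise step: equality in $|\nabla |v||\le |\nabla v|$ a.e.\ forces $v = a\phi$ for a constant vector $a$ and a scalar minimizer $\phi$, and then $\div v = a\cdot\nabla\phi = 0$ forces $a=0$. What is not straightforward, and would need to be argued in detail, is the compactness-modulo-translations-and-dilations of a \emph{constrained} minimizing sequence that you invoke to "extract a nontrivial extremizer''; the paper avoids this entirely because (b) hands it a minimizer.

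Part (b) has two genuine gaps. First, the claim that $S < S_\curl$ is "decisive'' for ruling out loss of compactness is misplaced. Any bubble escaping to infinity or to a singular scale would itself satisfy the limit equation $\curl(\curl u) = |u|^4u$ on $\r3$ and therefore carries energy at least $\frac13 S_\curl^{3/2}$, not $\frac13 S^{3/2}$; since the total energy tends to exactly $\frac13 S_\curl^{3/2}$, the comparison with $S$ gives you nothing. The paper does not use part (a) anywhere in the proof of (b); it instead renormalizes via Solimini's lemma (Lemma \ref{lem:Solimini}) to obtain $v_n'\rightharpoonup v_0\ne 0$. Second, and more seriously, you do not address why the weak limit $u$ is a weak solution of \eqref{eq}. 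This is precisely the obstacle the paper emphasizes throughout: $J'$ is not weak-to-weak$^*$ continuous because $W_0^6(\curl;\r3)$ does not embed locally compactly in any $L^p$, and the $\cW$-components $w(v_n)$ need not converge a.e.\ along a weakly convergent subsequence. The paper overcomes this via Theorem \ref{Th:Concentration}: the constraint $\div(|u_n|^4u_n)=0$ (built into $\cN$) makes the map $v\mapsto w(v)$ continuous in the required weak/a.e.\ sense, giving $w(v_n)\to w(v_0)$ a.e.\ after extraction, hence $u_n\to u_0$ a.e.\ and $J'(u_n)z\to J'(u_0)z$. Your Brezis--Lieb splitting tacitly assumes a.e.\ convergence of the full $u_n$, which is exactly what you have not established. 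Without this ingredient (or a substitute) one cannot conclude that $u$ is a solution, and therefore cannot conclude that $u\in\cN$ or that $J(u)=\inf_\cN J$.
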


A natural question arises whether ground states must have some symmetry properties. It follows from Theorem 1.1 in \cite{Bartsch} that any $\cO(3)$-equivariant (weak) solution to \eqref{eq} is trivial, hence a ground state cannot be radially symmetric.

The curl-curl problem $\curlop (\curlop u)= f(x,u)$ in a bounded domain or in $\R^3$ has been recently studied e.g. in \cite{Bartsch,BenFor,BartschMederski1,BartschMederski2,Mederski,MSchSz} under different hypotheses on $f$ but always assuming $f$ is subcritical, i.e. $f(x,u)/|u|^5\to 0$ as $|u|\to\infty$. However, the occurence of ground states to \eqref{eq} (i.e., in the critical exponent case) has been an open problem as we have already mentioned.  In view of the existence of Aubin-Talenti instantons, this is a very natural question. While the instantons are given explicitly, we have no such explicit formula for ground states in the curl-curl case. Since the instantons are radially symmetric up to translations, one can find them by ODE methods. In view of the above remark  concerning $\cO(3)$-equivariant solutions, such methods do not seem available for the curl-curl problem and a different approach is needed. Note further that there is no maximum principle for the curl-curl operator and, to our knowledge, no unique continuation principle applicable to our case. An approach different than for \eqref{se} is also required for the proof of $\Om$-independence of $S_{\curl}$, see Section \ref{sec:nonex}. Moreover concentration-compactness analysis for the curl operator is considerably different from that in \cite{ev,Lions85,Willem} -- see our approach in Section \ref{sec:con-com}.

We would like to emphasize an important role of the analysis of nonlinear curl-curl problems from the physical point of view. Solutions $u$ to nonlinear curl-curl equations describe the profiles of time-harmonic solutions $E(x,t)=u(x)\cos(\omega t)$ 
to the time-dependent nonlinear electromagnetic wave equation, which together with material constitutive laws and Maxwell equations, describes the {\em exact} propagation of electromagnetic waves in a nonlinear medium \cite{Agrawal,BartschMederski1,Stuart:1993}. Since finding propagation exactly may be very difficult, there are several simplifications in the literature which rely on  approximations of the nonlinear electromagnetic wave equation. The most prominent one is the scalar or vector nonlinear Schr\"odinger equation. 
For instance, one assumes that the term $\nabla(\div(u))$ in $\curlop(\curlop u)= \nabla(\div(u))-\Delta u$ is negligible and can be dropped, or one uses the so-called {\em slowly varying envelope approximation}. However, such simplifications may produce {\em non-physical} solutions; see \cite{Akhmediev-etal, Ciattoni-etal:2005} and the references therein.

We also point out that the term $|u|^4u$ in \eqref{eq} as well as in \eqref{eq:main} below allows to consider the so-called quintic effect in nonlinear optics modelled by Maxwell equations. See for instance \cite{Agrawal,BartschMederski1,Stuart:1993,Doerfler,MederskiJFA2018,Desyatnikov,Mihalache} and the references therein. We hope that our results will prompt further analytical studies of physical phenomena involving the quintic  nonlinearity, e.g. the well-known cubic-quintic effect in nonlinear optics \cite{Desyatnikov,Mihalache}. 

Using our concentration-compactness result we are also able to treat the Brezis-Nirenberg problem \cite{BrezisNirenberg} for the curl-curl operator
\begin{equation}\label{eq:main}
\curlop(\curlop u) + \lambda u = |u|^{4}u \qquad\textnormal{in } \Om
\end{equation}
 together with the so-called metallic boundary condition
\begin{equation}\label{eq:bc}
\nu\times u = 0\qquad\text{on }\pa\Om.
\end{equation}
Here $\nu:\pa\Om\to\R^3$ is the exterior normal and $\Om\subset\r3$ is a bounded domain. This boundary condition is natural in the theory of Maxwell equations and it holds when $\Om$ is surrounded by a perfect conductor.
If the boundary of $\Om$ is not of class $\cC^{1}$, then we assume \eqref{eq:bc} is satisfied in a generalized sense by which we mean $u$ is in the space $W^6_0(\curl;\Om)$ defined in \eqref{wzerosix}.  Weak solutions to \eqref{eq:main}--\eqref{eq:bc} correspond to critical points 
of the associated energy functional $J_\lambda: W^6_0(\curl;\Om)\to \R$ given by
\begin{equation}\label{eq:defOfJ}
J_\lambda(u):=\frac12\int_\Om |\curlop u|^2\, dx + \frac{\lambda}{2}\int_{\Om} |u|^2\, dx-\frac{1}{6}\int_\Om |u|^6\, dx.
\end{equation} 
Recall  from \cite{MederskiJFA2018,BartschMederski2} that the spectrum of the curl-curl operator in $H_0(\curl;\Om) := W^2_0(\curl;\Om)$
consists of the eigenvalue $\lambda_0=0$ with infinite multiplicity and of a sequence of eigenvalues 
$$0<\lambda_1\le \lambda_2\leq\dots\le\lambda_k\to\infty$$ with corresponding finite multiplicities $m(\lambda_k)\in\N$.
Let $\cN_\lambda$ be the generalized Nehari manifold for $J_\lambda$ (see \eqref{eq:DefOfN} for the definition), and for $\lambda\le 0$ let
\[
c_\lambda := \inf_{\cN_\lambda} J_\lambda.
\]
Denote the Lebesque measure of $\Om$ by $|\Om|$. We introduce the following condition:
\begin{itemize}
\item[($\Om$)] $\Om$ is a bounded domain, either convex or with $\cC^{1,1}$-boundary. 
\end{itemize}
The reason for this assumption will be explained in the next section.

In domains $\Om\ne \r3$ we also introduce another constant, $\ol S_\curl(\Om)$, such that the inequality
\begin{equation}\label{eq:neqOm}
\int_{\Om}|\curlop u|^2\, dx\geq \ol S_\curl(\Om)\inf_{w\in \cW_{\Om}}\Big(\int_{\Om}|u+w|^6\,dx\Big)^{\frac13}
\end{equation}
holds for any $u\in W^6_0(\curl;\Om)\setminus \W_\Om$, where
$\cW_\Om:=\{w\in W^6_0(\curl;\Om): \curlop w=0\}$, and $\ol S_\curl(\Om)$ is largest with this property. As in \eqref{eq:neq}, also here the above inequality trivially holds if $u\in\cW_\Om$. Although $\ol S_\curl(\Om)$ seems to be more natural than $S_\curl(\Om)$, we do not know whether it equals $S_\curl$. We are only able to prove the following result:

\begin{Th}\label{th:Sbar} Let $\Omega$ be a Lipschitz domain in $\r3$, possibly unbounded, $\Omega\ne \r3$. Then $S_\curl\geq  \ol S_\curl(\Om)$. If $\Om$ satisfies $(\Om)$, then $\ol S_\curl(\Om)\ge S$. 
\end{Th}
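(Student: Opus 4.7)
For part (a), the plan is a concentration argument based on the ground state from Theorem \ref{Th:main1}(b). Let $u_0\in\cN$ attain $S_\curl$; the constraint $\div(|u_0|^4u_0)=0$ built into $\cN$ is exactly the Euler-Lagrange condition making $w=0$ a minimizer in $\inf_{w\in\cW}|u_0+w|_6$, so $\inf_{w\in\cW}|u_0+w|_{L^6(\R^3)}=|u_0|_6$. Fix $x_0\in\Om$ and a cutoff $\eta\in\cC_0^\infty(\Om)$ identically $1$ near $x_0$; for $\mu>0$ set $u_\mu(x):=\eta(x)\mu^{1/2}u_0(\mu(x-x_0))\in W^6_0(\curl;\Om)$. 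Scale-invariance of $|\curlop\cdot|_2^2$ and $|\cdot|_6^6$, combined with standard cutoff estimates (the bulk of $\mu^{1/2}u_0(\mu(\cdot-x_0))$ concentrates at $x_0$ as $\mu\to\infty$, making $\nabla\eta$ contributions negligible), give $|\curlop u_\mu|_{L^2(\Om)}^2\to|\curlop u_0|_2^2$. The denominator is sandwiched: the trivial upper bound $\inf_{w\in\cW_\Om}|u_\mu+w|_{L^6(\Om)}^6\leq|u_\mu|_{L^6(\Om)}^6\to|u_0|_6^6$, and for the reverse bound I use that extension by zero embeds $\cW_\Om\hookrightarrow\cW$ and preserves the relevant $L^6$-norm (for $w\in\cW_\Om$, both $u_\mu$ and $w$ vanish outside $\Om$, so $|u_\mu+w|_{L^6(\Om)}=|u_\mu+w|_{L^6(\R^3)}$), whence
\[
\inf_{w\in\cW_\Om}|u_\mu+w|_{L^6(\Om)}^6\geq \inf_{w\in\cW}|u_\mu+w|_{L^6(\R^3)}^6\longrightarrow |u_0|_6^6.
\]
Dividing numerator by denominator, the ratio converges to $|\curlop u_0|_2^2/|u_0|_6^2=S_\curl$, giving $\ol S_\curl(\Om)\leq S_\curl$.

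For part (b) under $(\Om)$, my plan is to use the Helmholtz decomposition inside $\Om$. Solving $\Delta\varphi=\div u$ in $\Om$ with $\varphi\in W^{1,6}_0(\Om)$ — available under $(\Om)$ by $L^6$ elliptic regularity — I set $v:=u-\nabla\varphi$. Then $w:=-\nabla\varphi\in\cW_\Om$ (since $\varphi=0$ on $\pa\Om$ forces $\nu\times\nabla\varphi=0$), $v\in W^6_0(\curl;\Om)$ is divergence-free with $\nu\times v=0$ on $\pa\Om$, and $\curlop v=\curlop u$. Under $(\Om)$ the Gaffney/Friedrichs regularity gives $v\in H^1(\Om)$ together with an integration-by-parts identity of the form
\[
\int_\Om|\nabla v|^2\,dx = \int_\Om|\curlop v|^2\,dx + \int_\Om|\div v|^2\,dx - \int_{\pa\Om} H\,(v\cdot\nu)^2\,d\sigma,
\]
where $H\geq 0$ is a boundary curvature quantity (twice the mean curvature, nonnegative when $\Om$ is convex; the analogous sign for $\cC^{1,1}$-boundaries together with the sign convention for $\nu\times v=0$ gives the same conclusion). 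With $\div v=0$ this yields $|\nabla v|_{L^2(\Om)}^2\leq|\curlop u|_{L^2(\Om)}^2$. A Sobolev-type bound $|\nabla v|_{L^2(\Om)}^2\geq S|v|_{L^6(\Om)}^2$, combined with $|v|_{L^6(\Om)}^2\geq(\inf_{w'\in\cW_\Om}|u+w'|_{L^6(\Om)}^6)^{1/3}$ (which holds since $v=u+w$ with $w\in\cW_\Om$), then delivers $\ol S_\curl(\Om)\geq S$.

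The hard part is this last Sobolev step, since $v\notin H^1_0(\Om)$ in general — the normal component $v\cdot\nu$ on $\pa\Om$ need not vanish, and extension by zero to $\R^3$ is not $H^1$-admissible. I expect the resolution exploits the extra structure $\nu\times v=0$ and $\div v=0$ to build an extension $\tilde v\in\cD^{1,2}(\R^3)$ adapted to the problem (e.g.\ in local coordinates near $\pa\Om$, a zero extension of the tangential part combined with an even reflection of the normal component), using the favorable boundary curvature term in the identity above to absorb the cost of the reflection. Alternatively, one can first run the argument on $u\in\cC_0^\infty(\Om,\R^3)$, where the $\R^3$-Helmholtz decomposition $v=u-\nabla\Phi$ on $\R^3$ yields a genuinely div-free $v\in\cD^{1,2}(\R^3)$ with $|\nabla v|_2=|\curlop u|_2$ and the sharp $\R^3$-Sobolev inequality applies directly, and then identify the $\R^3$-gradient $\nabla\Phi$ with an element of $\cW_\Om$ up to a harmonic correction that must be controlled under $(\Om)$, concluding by density of $\cC_0^\infty(\Om,\R^3)$ in $W^6_0(\curl;\Om)$.
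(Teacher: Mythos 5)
Your proof of the first inequality ($S_\curl\ge\ol S_\curl(\Om)$) is correct, and it takes a genuinely different route from the paper. The paper deduces this inequality indirectly: it first observes $\ol S_\curl(\Om)\le S_\curl(\Om)$ (trivially, from $\cW_\Om\subset\cW$ via Lemma \ref{lem:W_Om}), and then proves $S_\curl(\Om)\le S_\curl$ by approximating the ground state $u_0$ by compactly supported smooth functions, rescaling them into $\Om$, projecting onto $\cN$, and invoking Lemma \ref{lemNehariineq}. You instead concentrate the ground state directly at an interior point via a cutoff and rescaling, which bypasses the Nehari inequality entirely. This is clean. The one step you state without justification --- that $\inf_{w\in\cW}|u_\mu+w|_{L^6(\r3)}^6\to|u_0|_6^6$ --- does hold, but needs a sentence: by Lemma \ref{isom} one has $|u_\mu+w(u_\mu)|_6=|v_\mu+w(v_\mu)|_6$ where $v_\mu:=T_{1/\mu,x_0}(u_\mu)=\eta(\cdot/\mu+x_0)u_0\to u_0$ in $L^6$, and then Lemma \ref{Continuity} together with $w(u_0)=0$ (which, as you rightly note, is exactly the constraint $\div(|u_0|^4u_0)=0$ in $\cN$) finishes it.

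For the second inequality ($\ol S_\curl(\Om)\ge S$ under $(\Om)$), your plan identifies the right ingredients --- the interior Helmholtz decomposition $u=v+w$ with $v\in\cV_\Om$, $w\in\cW_\Om$, and the Gaffney/Friedrichs-type estimate $|\nabla v|_{L^2(\Om)}^2\le|\curlop v|_{L^2(\Om)}^2$ valid for convex or $\cC^{1,1}$ domains when $\div v=0$ and $\nu\times v=0$ --- but, as you yourself acknowledge, the argument then stalls at the Sobolev step. That concern is well founded: $v\in H^1(\Om,\r3)$ but not $H^1_0(\Om,\r3)$, and extending $v$ (or a single component $v_i$) by zero does not produce an element of $\cD^{1,2}(\r3)$ because the normal trace $v\cdot\nu$ need not vanish (the distributional gradient of the zero extension picks up a surface term $-v_i\nu\,\sigma_{\pa\Om}$). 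Hence the component-wise scalar Sobolev inequality with the sharp constant $S$, which is the engine of Lemma \ref{lemmaScurlS}, is not available off the shelf. Neither of your two sketched fixes --- a reflection-type extension using the curvature term, or passing through $\cC_0^\infty(\Om,\r3)$ and the $\r3$-Helmholtz decomposition with a harmonic correction --- is carried through, and neither is obviously short: in the second route, the $\r3$-Helmholtz projection $\hat v$ of the zero extension satisfies $|\nabla\hat v|_{L^2(\r3)}^2=|\curlop v|_{L^2(\Om)}^2$ and a Sobolev bound, but $|\hat v+w(\hat v)|_{L^6(\r3)}$ is a \emph{lower} bound for $\big(\inf_{w\in\cW_\Om}|u+w|_{L^6(\Om)}^6\big)^{1/6}$ rather than an upper bound, so the chain of inequalities goes the wrong way and only recovers $S_\curl(\Om)\ge S$, not $\ol S_\curl(\Om)\ge S$. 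The paper itself disposes of this part in one line ("repeating the argument of Lemma \ref{lemmaScurlS} with obvious changes"), which amounts to asserting the scalar Sobolev inequality $\io|\nabla v_i|^2\ge S(\io|v_i|^6)^{1/3}$ for the components of $v\in\cV_\Om$; making that assertion precise is exactly the step you flag, and your proposal leaves it open.
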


Finally, the main result concerning the Brezis-Nirenberg problem for the curl-curl operator \eqref{eq:main} reads as follows.

\begin{Th}\label{thm:mainBN} 
	Suppose $\Om$ satisfies $(\Om)$. Let  $\lambda\in(-\lambda_\nu, -\lambda_{\nu-1}]$ for some $\nu\geq 1$.  
	Then $c_\lambda >0$ and
	 the following statements hold:\\
	(a) If $c_\lambda<c_0$, then there is a  ground state solution to \eqref{eq:main}--\eqref{eq:bc}, i.e. $c_\lambda$ is attained by a critical point of $J_\lambda$. A sufficient condition for this inequality to hold is $\lambda\in(-\lambda_\nu,-\lambda_\nu+ \ol S_\curl(\Om)|\Om|^{-2/3})$. \\
	(b) There exists $\eps_\nu\geq \ol S_\curl(\Om)|\Om|^{-2/3}$ such that $c_\lambda$ is not attained for $\lambda\in(-\lambda_{\nu}+\eps_\nu,-\lambda_{\nu-1}]$, and $c_\lambda=c_0$  for $\lambda\in[-\lambda_{\nu}+\eps_\nu,-\lambda_{\nu-1}]$. We do not exclude that $\eps_\nu>\lambda_\nu-\lambda_{\nu-1}$, so these intervals may be empty.\\
	(c) $c_\lambda\to 0$ as $\lambda\to -\lambda_\nu^-$, and the function $$(-\lambda_{\nu},-\lambda_{\nu}+\eps_\nu]\cap (-\lambda_{\nu},-\lambda_{\nu-1}]\ni\lambda\mapsto c_\lambda\in (0,+\infty)$$ is continuous and strictly increasing.
	\\
	(d) There are at least $\#\big\{k: -\lambda_k <\lambda <-\lambda_k +\frac13\ol S_\curl(\Om)|\Om|^{-\frac23}\big\}$ pairs of solutions $\pm u$ to \eqref{eq:main}--\eqref{eq:bc}. 
\end{Th}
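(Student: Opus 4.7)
The proof adapts the Brezis--Nirenberg strategy to the curl-curl setting, combining the generalized Nehari set $\cN_\lambda$ with the concentration-compactness analysis developed in Section \ref{sec:con-com} and the sharp value $c_0 = \tfrac{1}{3}S_\curl^{3/2}$ identified via Theorem \ref{Th:main1}.

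First I would check $c_\lambda > 0$ for $\lambda \in (-\lambda_\nu, -\lambda_{\nu-1}]$: the spectral decomposition of $\curl\curl$ on $H_0(\curl;\Om)$ shows that on the orthogonal complement of $\bigoplus_{k<\nu} E_{\lambda_k}$ the quadratic form $|\curl u|_2^2 + \lambda|u|_2^2$ is coercive, which on $\cN_\lambda$ yields a uniform positive lower bound for $|u|_6$. For part (a), I would take a minimizing sequence $(u_n)\subset\cN_\lambda$, apply Ekeland's variational principle to extract a Palais--Smale sequence, and then invoke the concentration-compactness splitting modulo $\cW_\Om$: the sequence decomposes into a weak limit plus finitely many ``bubbles'' which, after rescaling, converge to nontrivial solutions of the limit equation \eqref{eq} on $\R^3$. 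By Theorem \ref{Th:main1}(b) each bubble carries energy at least $c_0$, so the hypothesis $c_\lambda < c_0$ forbids bubbles altogether and produces (after correction by an element of $\cW_\Om$) a minimizer, hence a critical point of $J_\lambda$ solving \eqref{eq:main}--\eqref{eq:bc}. For the sufficient condition, I would test with an eigenfunction $\phi$ of $\curl\curl$ at eigenvalue $\lambda_\nu$: using $|\curl\phi|_2^2 = \lambda_\nu|\phi|_2^2$, a direct computation on the Nehari fibre through $\phi$ gives
\[
c_\lambda \leq \max_{t>0} J_\lambda(t\phi) = \frac{1}{3}\bigg(\frac{(\lambda_\nu + \lambda)|\phi|_2^2}{|\phi|_6^2}\bigg)^{3/2},
\]
and Hölder's inequality $|\phi|_2^2 \leq |\Om|^{2/3}|\phi|_6^2$ together with $\ol S_\curl(\Om) \leq S_\curl$ from Theorem \ref{th:Sbar} reduces $c_\lambda < c_0$ to $(\lambda_\nu + \lambda)|\Om|^{2/3} < \ol S_\curl(\Om)$, which is exactly the advertised interval. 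The same estimate yields $c_\lambda \to 0$ as $\lambda \to -\lambda_\nu^-$.

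For (c), strict monotonicity of $c_\lambda$ follows from the strict increase of $J_\lambda$ in $\lambda$ on nonzero elements of $\cN_\lambda$ combined with the Nehari fibre structure; continuity comes from the Lipschitz dependence of $\lambda \mapsto J_\lambda(u)$ on $L^2$-bounded sets together with uniform bounds on near-minimizers. For (b), were $c_\lambda = c_0$ attained, the concentration-compactness analysis combined with the rigidity part of Theorem \ref{Th:main1} would produce a ground state of \eqref{eq} on $\R^3$ supported modulo $\cW$ in $\ol\Om$, contradicting the slow decay of $\R^3$-extremizers; hence non-attainment, and the monotonicity of (c) together with the sufficient condition of (a) supplies $\eps_\nu \geq \ol S_\curl(\Om)|\Om|^{-2/3}$, with $c_\lambda = c_0$ on the ensuing range by squeezing. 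For (d), I would apply a Ljusternik--Schnirelman / genus construction to the $\Z_2$-equivariant functional $J_\lambda$ on $\cN_\lambda$: using linear combinations of the first $k$ eigenfunctions of $\curl\curl$ I construct odd continuous maps $S^{k-1}\to\cN_\lambda$ whose $J_\lambda$-supremum on the image stays below $c_0$ precisely when $-\lambda_k < \lambda < -\lambda_k + \tfrac{1}{3}\ol S_\curl(\Om)|\Om|^{-2/3}$; the Palais--Smale compactness below $c_0$ established in (a) then delivers one additional pair of critical points for each such $k$.

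The principal obstacle is the concentration-compactness analysis at the critical level. Unlike the scalar Laplacian case, $\curl$ has the infinite-dimensional kernel $\cW_\Om$, and weak convergence in $W^6_0(\curl;\Om)$ does not control the $L^6$-norm of the gauge class $u + \cW_\Om$. Bubbles must be detected modulo $\cW_\Om$ via a Helmholtz-type orthogonal decomposition and their localization established without the benefit of a maximum principle, Pohozaev identity, or unique continuation. Matching each bubble energy exactly to $\tfrac{1}{3}S_\curl^{3/2}$ rests decisively on Theorems \ref{TheoremS_curl} and \ref{Th:main1}, which is precisely what makes $c_0$ the natural Brezis--Nirenberg threshold for this problem.
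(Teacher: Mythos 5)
Three substantive problems.

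\textbf{The threshold level $c_0$.} You set $c_0 = \tfrac13 S_\curl^{3/2}$. In the paper $c_0 = \inf_{\cN_0}J_0 = \tfrac13\,\ol S_\curl(\Om)^{3/2}$, where $\ol S_\curl(\Om)$ is defined via the smaller kernel $\cW_\Om$ and is known only to satisfy $S\le\ol S_\curl(\Om)\le S_\curl$; whether equality holds is open. This distinction matters: the excess energy of a non-converging Palais--Smale residual lives on $\Om$ and is compared to $\ol S_\curl(\Om)$ via $\cN_\Om$, not to $S_\curl$ via a blow-up to $\R^3$. Your bubbling picture would need both the concentrating (giving $S_\curl$) and non-concentrating (giving $\ol S_\curl(\Om)$) contributions, and the minimum of the two is $\ol S_\curl(\Om)$. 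The paper avoids the bubbling machinery entirely: after showing $u_n\to u_0$ in $L^2$ and a.e.\ (Theorem \ref{Th:Concentration}, Lemma \ref{lem:ConvL2}), it projects $u_n-u_0$ onto $\cN_0 = \cN_\Om$ using Lemma \ref{lemNehariineq} and reads off $J_0(u_n-u_0)\ge c_0 + o(1)$ directly. No bubble extraction, localization, or blow-up to $\R^3$ is needed or available.

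\textbf{The upper estimate for $c_\lambda$.} You write $c_\lambda\le\max_{t>0}J_\lambda(t\phi)$. This is not justified: the point of $\cN_\lambda$ lying on the fibre through $\phi$ is $m_\lambda(\phi) = t(\phi)(\phi+\wt w(\phi))$, and $J_\lambda(m_\lambda(\phi))=\max_{t>0,\,\wt w\in\wt\cW}J_\lambda(t\phi+\wt w)\ge\max_{t>0}J_\lambda(t\phi)$, so your inequality goes the wrong way. The paper instead estimates $J_\lambda(u)$ for an \emph{arbitrary} $u=te_\nu+\wt v+w\in\cN_\lambda$ on that fibre, using $|\curlop v|_2^2\le\lambda_\nu|v|_2^2$ (valid because $v=te_\nu+\wt v$ lies in the span of eigenvectors with eigenvalue $\le\lambda_\nu$) and the $L^2$-orthogonality of $v$ and $w$ to obtain $c_\lambda\le\tfrac13(\lambda+\lambda_\nu)^{3/2}|\Om|$. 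You arrive at the same number after H\"older, but the intermediate inequality is false as stated.

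\textbf{Non-attainment in (b).} Your argument invokes ``slow decay of $\R^3$-extremizers'' to contradict an extremizer supported in $\ol\Om$. No such decay property is known; the paper explicitly leaves open whether ground states of \eqref{eq} can have compact support (see the Remark following the proof of Theorems \ref{TheoremS_curl}, \ref{th:Sbar}, and problems (P1), (P2)). The paper proves (b) and (c) by citing \cite[Theorem~2.2(b)]{MederskiJFA2018}: the map $\lambda\mapsto c_\lambda$ is non-decreasing and continuous, $c_\lambda\to0$ as $\lambda\to-\lambda_\nu^-$, and whenever $c_{\mu_1}=c_{\mu_2}$ with $\mu_1<\mu_2$ the infimum is not attained on $(\mu_1,\mu_2]$; defining $\eps_\nu$ as the supremum of parameters where $c_\lambda<c_0$ then gives (b), and the estimate from Lemma \ref{beta} gives $\eps_\nu\ge\ol S_\curl(\Om)|\Om|^{-2/3}$. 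Your monotonicity observations for (c) are along the right lines, but the non-attainment mechanism must come from this monotonicity--plateau argument, not from rigidity of $\R^3$-extremizers. Part (d) and the positivity $c_\lambda>0$ are handled essentially as you describe.
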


Note that if $\lambda$ is as in (a), then the relation $-\lambda_k <\lambda <-\lambda_k +\frac13\ol S_\curl(\Om)|\Om|^{-\frac23}$ holds for $k=\nu,\ldots,\nu+m-1$ where $m$ is the multiplicity of $\lambda_\nu$ but it may also hold for some $k$ with $\lambda_k>\lambda_\nu$.

The above result is known for cylindrically symmetric domains where it is possible to reduce the curl-curl operator to a positive definite one, see \cite{MederskiJFA2018}. However, the solution obtained there is a ground state in a subspace of functions having cylindric symmetry and we do not know whether it is a ground state in the full space.

Let us recall from earlier work that the main difficulties when treating $J$ and $J_\lambda$, also in the subcritical case, are that these functionals are strongly indefinite, i.e., they are unbounded from above and from below, even on subspaces of finite codimension. Moreover, the quadratic part of $J$ has infinite-dimensional kernel and $J', J'_\lambda$ are not (sequentially) weak-to-weak$^*$ continuous, i.e. $u_n\weakto u$ does not imply that $J'_\lambda(u_n)\vp\to J'_\lambda(u)\vp$ for all $\vp\in \cC_0^{\infty}(\Om,\R^3)$. This lack of continuity is caused by the fact that $W^p_0(\curl;\Om)$ is not (locally) compactly embedded in any Lebesgue space and we do not know whether necessarily $u_n\to u$ a.e. in $\Om$. A consequence of this is that for a Palais-Smale sequence $u_n\rh u$ it is not clear whether $u$ is a critical point.  In the subcritical case  one can overcome these difficulties since either a variant of the Palais-Smale condition is satisfied or some compactness  can be recovered on a suitable topological manifold, see e.g. \cite{BartschMederski1,Mederski,MSchSz}. In the critical case however, there are additional difficulties. In Section \ref{sec:con-com} we introduce a general concentration-compactness analysis for this case. We show that the topological manifold 
\begin{equation*}
\Big\{u\in W_0^6(\curl;\R^3): \div(|u|^4u)=0\Big\}
\end{equation*}
is locally compactly embedded in $L^p(\R^3,\R^3)$ for $1\leq p<6$ and that if a sequence $(u_n)$ is contained in this manifold and $u_n\rh u$, then $u_n\to u$ a.e. after passing to a subsequence. This result will play a crucial role  because it implies that if such $(u_n)$ is a Palais-Smale sequence, then $u$ is a solution for our equation. If the condition $\div(|u|^4u)=0$ is violated, the embedding need not be locally compact.

The paper is organized as follows. In Section \ref{sec:setting} we introduce the functional setting and some notation. Section \ref{sec:con-com} concerns the concentration-compactness analysis as we have already mentioned. In Section \ref{sec:Om=R3} we prove Theorem \ref{Th:main1}, and in Section \ref{sec:nonex} we  prove Theorems \ref{TheoremS_curl} and \ref{th:Sbar}. The proof of Theorem \ref{thm:mainBN} is contained in Section \ref{sec:bn} whereas in Section \ref{op} we state some open problems. 

\section{Functional setting and preliminaries}\label{sec:setting}

Throughout the paper we assume that $\Om$ is a Lipschitz domain in $\r3$ and $2\le p\leq 2^*=6$. 
The curl of $u$, $\curlop u$, should be understood in the distributional sense.
We shall look for solutions to \eqref{eq} and \eqref{eq:main}--\eqref{eq:bc} in the space $W_0^6(\curl;\r3)$ and $W^6_0(\curl;\Om)$ respectively. 
We introduce the subspaces
\begin{eqnarray*}
\cV_\Om &:=&\left\{v\in W^6_0(\curl;\Om): \int_\Om\langle v,\varphi\rangle\,dx=0
\text{ for every $\varphi\in \cC^\infty_0(\Om,\R^3)$ with $\curlop\varphi=0$} \right\},\\
\cW_\Om &:=& \left\{w\in W^6_0(\curl;\Om): \int_\Om\langle w,\curlop\varphi\rangle \,dx = 0
\text{ for all }\varphi\in\cC^\infty_0(\Om,\R^3)\right\}\\
&=& \{w\in W^6_0(\curl;\Om): \curlop w=0 \text{ in the sense of distributions}\}.
\end{eqnarray*}
The second one has already been defined in Section \ref{sec:intro}. Here and below $\langle .\,,.\rangle$ denotes the inner product in $\r3$. If $\Om=\r3$, we shall usually write $\cV$ and $\cW$ for $\cV_{\r3}$ and $\cW_{\r3}$.

In the sequel $\Om$ is always a Lipschitz domain and $C$ denotes a generic positive constant which may vary from one equation to another.

In the following subsections we consider two cases.

\subsection{$\Om=\R^3$}

\begin{Lem} \label{density}
	$W^p(\curl;\R^3) = W_0^p(\curl;\R^3)$ for each $p\in[2,6]$.
\end{Lem}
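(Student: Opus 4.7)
The plan is to show that $\cC_0^\infty(\R^3,\R^3)$ is dense in $W^p(\curl;\R^3)$ for the norm $(|u|_p^2+|\curlop u|_2^2)^{1/2}$, via the standard two-step procedure: first approximate $u\in W^p(\curl;\R^3)$ by compactly supported elements of the same space (truncation), then smooth those out by convolution with a mollifier.

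For the truncation step, I would fix $\eta\in\cC_0^\infty(\R^3)$ with $\eta\equiv 1$ on $B(0,1)$, $\eta\equiv 0$ off $B(0,2)$, $0\le\eta\le 1$, and set $\eta_n(x):=\eta(x/n)$, so that $|\nabla\eta_n|\le C/n$ and $\nabla\eta_n$ is supported in the shell $A_n:=\{n\le|x|\le 2n\}$. Then $\eta_n u\to u$ in $L^p$ by dominated convergence, and the identity
$$
\curlop(\eta_n u) = \eta_n \curlop u + \nabla\eta_n\times u
$$
shows that $\eta_n\curlop u\to \curlop u$ in $L^2$, again by dominated convergence. The delicate term is $\nabla\eta_n\times u$, for which H\"older's inequality (with exponents $p/2$ and $p/(p-2)$, or the limit case $p=6$ handled with exponents $3$ and $3/2$) gives
$$
\int_{A_n}|u|^2\,dx \le \Bigl(\int_{A_n}|u|^p\Bigr)^{2/p} |A_n|^{(p-2)/p} \le C\,n^{3(p-2)/p}\Bigl(\int_{A_n}|u|^p\Bigr)^{2/p},
$$
and hence $|\nabla\eta_n\times u|_2\le C\,n^{3(p-2)/(2p)-1}|u|_{L^p(A_n)}$. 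For $2\le p<6$ the exponent $3(p-2)/(2p)-1$ is negative, so the bound tends to $0$; for $p=6$ the power of $n$ is exactly $0$ and one uses instead that $|u|_{L^6(A_n)}\to 0$ because $u\in L^6(\R^3)$ and $|A_n|\uparrow$ the complement of a bounded set. In all cases $\eta_n u\to u$ in $W^p(\curl;\R^3)$.

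For the mollification step, given a compactly supported $v\in W^p(\curl;\R^3)$ I would take a standard mollifier $\rho_\eps$ and set $v_\eps:=\rho_\eps*v$. Then $v_\eps\in \cC_0^\infty(\R^3,\R^3)$ for $\eps$ small enough, $\curlop v_\eps=\rho_\eps*\curlop v$ in the distributional sense, and the usual convergence $\rho_\eps* f\to f$ in $L^q(\R^3,\R^3)$ yields $v_\eps\to v$ in $L^p$ and $\curlop v_\eps\to\curlop v$ in $L^2$. Combined with the truncation step, this produces $\cC_0^\infty$ functions arbitrarily close to $u$, proving $W^p(\curl;\R^3)\subset W_0^p(\curl;\R^3)$; the reverse inclusion is immediate from the definition.

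The main obstacle, however slight, is the borderline case $p=6$, where the naive scaling count only yields boundedness of the cutoff error: one has to supplement the H\"older estimate with the absolute continuity of $\int|u|^6$ over the shells $A_n$. Once this is noted, the argument is uniform in $p\in[2,6]$.
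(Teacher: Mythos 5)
Your proof is correct and follows essentially the same route as the paper's: truncation by a dilated cutoff, control of the cutoff error $\nabla\eta_n\times u$ via H\"older's inequality (with the same exponents, written there componentwise as $q=2p/(p-2)$), then mollification. Your explicit remark that the borderline $p=6$ case needs the vanishing of $\int_{A_n}|u|^6$ rather than the decaying power of $n$ makes precise a point the paper leaves implicit.
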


\begin{proof}
	We show $\cC_0^\infty(\R^3,\R^3)$ is dense in $W^p(\curl;\R^3)$. Let $\chi_R\in \cC_0^\infty(\R^3)$ be such that $|\nabla\chi_R|\le 2/R$, $\chi_R=1$ for $|x|\le R$ and $\chi_R= 0$ for $|x|\ge 2R$. Take $u=(u_1,u_2,u_3)\in W^p(\curl;\R^3)$. Then $\chi_Ru\to u$ in $L^p(\R^3,\R^3)$ as $R\to\infty$. We have 
	\begin{equation} \label{chi}
	\partial_i(\chi_Ru_j)-\partial_j(\chi_Ru_i) = (\partial_i\chi_R)u_j - (\partial_j\chi_R)u_i + \chi_R(\partial_iu_j-\partial_ju_i), \quad i\ne j.
	\end{equation}
If $p=2$, it is clear that $(\partial_i\chi_R)u_j\to 0$ in $L^2(\r3)$. If $2<p\le 6$, then 
\[
\ir3(\partial_i\chi_R)^2u_j^2\,dx \le \left(\int_{R\le|x|\le 2R}|\partial_i\chi_R|^q\,dx\right)^{2/q}\left(\int_{R\le|x|\le 2R}|u_j|^p\,dx\right)^{2/p} 
\]
where $q=2p/(p-2)\ge 3$. Since
\[
\int_{R\le|x|\le 2R}|\partial_i\chi_R|^q\,dx \le CR^{3-q}<+\infty,
\]
also here $(\partial_i\chi_R)u_j\to 0$ in $L^2(\r3)$. As $\partial_iu_j-\partial_ju_i\in L^2(\R^3)$, it follows that the left-hand side in \eqref{chi} tends to $\partial_iu_j-\partial_ju_i$ in $L^2(\R^3)$ as $R\to\infty$. 
Hence $\chi_R u \to u$ in $W^p(\curl;\R^3)$ and functions of compact support are dense in $W^p(\curl;\R^3)$.
		
	 Suppose now  $u\in W^p(\curl;\R^3)$ has a compact support. Clearly, $j_\eps*u\to u$ in $L^p(\R^3,\R^3)$ as $\eps\to 0$ where $j_\eps$ is the standard mollifier. Since
	\begin{equation} \label{mollify}
	\partial_i(j_\eps*u_j)-\partial_j(j_\eps*u_i) = j_\eps*(\partial_iu_j-\partial_ju_i)
	\end{equation}
	and $\partial_iu_j-\partial_ju_i\in L^2(\R^3)$, the right-hand side above tends to $\partial_iu_j-\partial_ju_i$ in $L^2(\R^3)$ as $\eps\to 0$. This completes the proof.
\end{proof}

As usual, let $\cD^{1,2}(\R^3,\R^3)$ denote the completion of $\cC^{\infty}_0(\R^3,\R^3)$ with respect to the norm $|\nabla \cdot|_2$.
The following Helmholtz decomposition holds, see \cite{Mederski,MSchSz}:

\begin{Lem}\label{defof} $\V$ and $\cW$ are closed subspaces of $W^6_0(\curl;\R^3)$  and
	\begin{equation}\label{HelmholzDec}
	W^6_0(\curl;\R^3)=\V\oplus \W.
	\end{equation}
	Moreover, $\cV\subset\cD^{1,2}(\r3,\r3)$ and the norms $|\nabla \cdot|_2$ and $\|\cdot\|_{W^6(\curl;\R^3)}$ are equivalent in $\V$.
\end{Lem}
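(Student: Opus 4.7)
The plan is to identify $\cV$ and $\cW$ as the distributional divergence-free and curl-free subspaces of $W^6_0(\curl;\r3)$, construct the splitting via a Leray-type projection built from Riesz transforms, and deduce the equivalence of norms from the Plancherel identity $|\nabla v|_2=|\curlop v|_2$, valid for divergence-free fields.

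First I would verify closedness: $\cW$ is the kernel of the bounded operator $\curlop:W^6_0(\curl;\r3)\to L^2(\r3,\r3)$, and $\cV$ is an intersection of kernels of the continuous functionals $v\mapsto\int_{\r3}\langle v,\varphi\rangle\,dx$ indexed by $\varphi\in\cC_0^\infty(\r3,\r3)$ with $\curlop\varphi=0$ (continuous on $L^6$ since each such $\varphi$ is fixed with compact support). Next I would show that $\cV=\{v\in W^6_0(\curl;\r3):\div v=0\text{ in }\cD'(\r3)\}$. The key observation is that every $\varphi\in\cC_0^\infty(\r3,\r3)$ with $\curlop\varphi=0$ is a gradient $\varphi=\nabla\phi$ of some $\phi\in\cC_0^\infty(\r3)$: define $\phi(x):=\int_{x_0}^x\varphi\cdot dl$ from a base point $x_0$ outside a ball containing $\supp\varphi$; path-independence follows from $\curlop\varphi=0$, and since $\r3\setminus\supp\varphi$ is connected, $\phi$ vanishes outside $\supp\varphi$. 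Hence $\int_{\r3}\langle v,\varphi\rangle\,dx=-\langle\div v,\phi\rangle$, which proves the equivalence.

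For the splitting, given $u\in W^6_0(\curl;\r3)=W^6(\curl;\r3)$ by Lemma \ref{density}, I set $w:=-\nabla(-\Delta)^{-1}\div u$, realized as a composition of Riesz transforms via $\hat w(\xi)=-\xi(\xi\cdot\hat u(\xi))/|\xi|^2$. Since the Riesz transforms $R_j=\partial_j(-\Delta)^{-1/2}$ are bounded on $L^p(\r3)$ for $1<p<\infty$ by Calder\'on--Zygmund theory, $w\in L^6(\r3,\r3)$ with $\curlop w=0$, so $w\in\cW$. Then $v:=u-w$ satisfies $\curlop v=\curlop u\in L^2$ and $\div v=\div u-\Delta\bigl(-(-\Delta)^{-1}\div u\bigr)=0$, so $v\in\cV$. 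The intersection $\cV\cap\cW=\{0\}$ follows because any $u$ with $\curlop u=0=\div u$ is componentwise harmonic ($-\Delta u=\curlop(\curlop u)-\nabla\div u=0$), and a harmonic function in $L^6(\r3)$ must vanish: the mean value property combined with H\"older's inequality gives $|u(x)|\leq CR^{-1/2}|u|_{L^6(B_R(x))}\to 0$ as $R\to\infty$.

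Finally, for $v\in\cV$, viewing $\hat v$ as a tempered distribution, the conditions $\div v=0$ and $\curlop v\in L^2$ become $\xi\cdot\hat v=0$ and $\xi\times\hat v\in L^2$; the algebraic identity $|\xi\times\hat v|^2+|\xi\cdot\hat v|^2=|\xi|^2|\hat v|^2$ (valid for real $\xi$) then yields $|\xi|\hat v\in L^2$ with $|\nabla v|_2=|\curlop v|_2$, so $v\in\cD^{1,2}(\r3,\r3)$. Combining with the Sobolev embedding $\cD^{1,2}\hookrightarrow L^6$ gives $|v|_6\leq C|\curlop v|_2$, hence the norm equivalence on $\cV$. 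The main obstacle is justifying the Fourier/Riesz-transform manipulations for $u$ only in $L^6$ rather than in the Plancherel-friendly $L^2$ setting; this is handled by interpreting $\hat v$ as a tempered distribution and using the algebraic identity above to upgrade the $L^2$ information on $\widehat{\curlop v}$ to an $L^2$ bound on $|\xi|\hat v$.
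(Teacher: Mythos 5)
The paper itself does not prove this lemma; it is stated with a citation to earlier work of Mederski and of Mederski--Schino--Szulkin, so there is no in-paper proof to compare against. Your argument is a self-contained, correct proof by the standard Leray/Helmholtz projection route, which is almost certainly close to what those references do. Each ingredient is sound: closedness of $\cV$ and $\cW$ as intersections of kernels of continuous maps; the identification $\cV=\{v\in W^6_0(\curl;\R^3):\div v=0\}$ via the observation that compactly supported curl-free fields on $\R^3$ are exactly gradients of compactly supported scalars (simple connectedness plus connectedness of the complement of a large ball); the construction of the splitting through double Riesz transforms, which are bounded on $L^6$; and the fact that a field with zero curl and zero divergence is componentwise harmonic, hence zero in $L^6$ by the mean-value estimate.

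The only place that deserves to be written out more carefully is the one you flag yourself: the passage from $\curlop v\in L^2$, $\div v=0$ to $\nabla v\in L^2$ with $|\nabla v|_2=|\curlop v|_2$. The pointwise identity $|\xi|^2|\hat v|^2=|\xi\times\hat v|^2+|\xi\cdot\hat v|^2$ cannot be invoked directly since $\hat v$ is only a tempered distribution. The clean fix is to use the distributional identity $-\Delta v=\curlop\curlop v-\nabla\div v=\curlop\curlop v$, hence $|\xi|^2\hat v=-i\xi\times\widehat{\curlop v}$ away from $\xi=0$; dividing by $|\xi|$ gives $|\xi|\hat v=-i\,(\xi/|\xi|)\times\widehat{\curlop v}$, which is the (vector) Riesz transform of the $L^2$ function $\widehat{\curlop v}$ and therefore lies in $L^2$ with $\||\xi|\hat v\|_2=\|\widehat{\curlop v}\|_2$ (since $|(\xi/|\xi|)\times a|=|a|$ whenever $\xi\cdot a=0$, and indeed $\xi\cdot\widehat{\curlop v}=0$). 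One also needs to rule out a contribution to $\hat v$ supported at the origin; this is excluded because such a contribution corresponds to a polynomial part of $v$, incompatible with $v\in L^6(\R^3,\R^3)$. With that spelled out, your proof is complete. A small cosmetic slip: $|u|_{L^6(B_R(x))}$ does not tend to $0$ as $R\to\infty$; what you want is $|u(x)|\le CR^{-1/2}|u|_{L^6(\R^3)}\to 0$.
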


We note that $\cW$ is the closure of $\{\nabla\vp: \vp\in \cC_0^\infty(\r3)\}$. Indeed, if $w\in\cW$, then $\curlop w = 0$, hence we can find $\vp_n$ such that $\nabla\vp_n\to w$ and $\nabla\vp_n\in \cC_0^\infty(\r3,\r3)$ \cite{Mederski,MSchSz}. Since $\nabla\vp_n=0$ outside of some ball, $\vp_n$ is constant there and we may assume this constant is 0. 

\subsection{$\Om$ bounded} \label{sectionOm}

Recall $H_0(\curl;\Om) := W^2_0(\curl;\Om)$ and  note that
\[
\begin{aligned}
\cV_\Om
&\subset\left\{u\in H_0(\curl;\Om): \div(u)\in L^2(\Om,\R^3)\right\}.
\end{aligned}
\]
Here we have used the fact that if $\vp$ in the definition of $\cV_\Om$ is supported in a ball, then $\vp=\nabla\psi$ for some $\psi$ and hence $u\in\cV_\Om$ implies $\div(u)=0$. 
It follows from \cite{Amrouche,Costabel}  that 
$\cV_\Om$
is continuously embedded in $H^{s}(\Om,\R^3)$ for some $s\in [1/2,1]$, hence compactly in $L^2(\Om,\R^3)$. If, in addition $\Om$ satisfies $(\Om)$, then $\cV_\Om$ is continuously embedded in $H^{1}(\Om,\R^3)$, hence compactly in $L^p(\Om,\R^3)$ for $1\leq p<6$ and continuously in $L^6(\Om,\R^3)$.
This implies in particular that 
\begin{equation} \label{vomega}
\cV_\Om=\left\{v\in H_0(\curl;\Om): \int_\Om\langle v,\varphi\rangle\,dx=0
\text{ for every $\varphi\in \cC^\infty_0(\Om,\R^3)$ with $\curlop\varphi=0$} \right\}
\end{equation}
is a Hilbert space with inner product
\[
(v,z) = \int_{\Om}\langle \curlop v,\curlop z\rangle\,dx \equiv \io  \langle\nabla v,\nabla z\rangle\,dx.
\]
 Observe that the right-hand side of \eqref{vomega} is a closed linear subspace of $W^6_0(\curl;\Om)$ as a consequence of $(\Om)$. 
Using this, it follows from the decomposition in \cite[Theorem 4.21(c)]{KirschHettlich} that also here there is a Helmholtz type decomposition
$$W^6_0(\curl;\Om) = \cV_\Om\oplus\cW_\Om$$ and that
\begin{equation*}%\label{eq:VperpW}
\int_{\Om}\langle v,w\rangle \,dx = 0 \quad \text{if } v\in \cV_\Om,\ w\in \cW_\Om  
\end{equation*}
which means that $\cV_\Om$ and $\cW_\Om$ are orthogonal in $L^2(\Om,\R^3)$. In $W_0^6(\curl;\Om)=\V_\Om\oplus\W_\Om$ we can use the norm
$$\|v+w\|:=\big((v,v)+|w|_6^2\big)^{\frac12}, \quad v\in \cV_\Om,\ w\in \cW_\Om$$
which is equivalent to $\|\cdot\|_{W_0^6(\curl;\Om)}$ if ($\Om$) is satisfied. 

 According to \cite[Theorem IX.2]{Dautray} or \cite[Theorem~3.33]{Monk}, there is a continuous tangential trace operator $\gamma_t:H(\curl;\Om) := W^2(\curl;\Om) \to H^{-1/2}(\pa\Om)$ such that
$$
\gamma_t(u)=\nu\times u|_{\pa\Om}\qquad\text{for any $u\in \cC^{\infty}(\overline\Om,\R^3)$}
$$
and
$$
H_0(\curl;\Om)=\{u\in H(\curl;\Om): \gamma_t(u)=0\}.
$$
Hence any vector field $u\in  W^6_0(\curl;\Om)=\V_\Om\oplus\W_\Om\subset H_0(\curl;\Om)$ satisfies the metallic boundary condition \eqref{eq:bc}.

 Denote the subspace of all gradient vector fields in $W_0^{1,6}(\Om)$ by $\nabla W_0^{1,6}(\Om)$. Clearly, 
$\nabla W_0^{1,6}(\Om)\subset\cW_\Om$. 
However, for general domains the subspace $\{w\in \W_\Om: \div(w)=0\}$ may be nontrivial and hence  $\nabla W^{1,6}_0(\Om)\subsetneq\cW_\Om$, see \cite[pp. 4314 and 4315]{BartschMederski2} and \cite[Theorem 3.42]{Monk}.

\begin{Lem}\label{lem:W_Om}
There holds $\cW_\Om=W^6_0(\curl;\Om)\cap \cW= W^6_0(\curl;\Om)\cap\nabla W^{1,6}(\Om)$.
If $\partial\Om$ is connected, then $\cW_\Om=\nabla W^{1,6}_0(\Om)$. If $\Om$ is unbounded, $\cW_\Om=W^6_0(\curl;\Om)\cap \cW$ still holds.
\end{Lem}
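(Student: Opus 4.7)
The plan is to first dispatch the equality $\cW_\Om=W_0^6(\curl;\Om)\cap\cW$, which is essentially built into the convention that elements of $W_0^6(\curl;\Om)$ are viewed inside $W_0^6(\curl;\R^3)$ via extension by zero. Given $w\in W_0^6(\curl;\Om)$, pick approximants $\vp_n\in\cC_0^\infty(\Om,\R^3)$ with $\vp_n\to w$ in the $W^6(\curl)$-norm; their zero-extensions $\tilde\vp_n\in\cC_0^\infty(\R^3,\R^3)$ converge in $L^6(\R^3,\R^3)$, while $\curlop\tilde\vp_n$ is the zero-extension of $\curlop\vp_n$ and hence Cauchy in $L^2(\R^3,\R^3)$. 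Thus $\curlop\tilde w$ equals the zero-extension of $\curlop w$, so $w\in\cW_\Om$ iff its extension lies in $\cW$. This argument uses nothing about boundedness of $\Om$, so it simultaneously settles the final claim for unbounded $\Om$.

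For $\cW_\Om=W_0^6(\curl;\Om)\cap\nabla W^{1,6}(\Om)$, the $\supset$ inclusion is immediate since $\curlop\nabla\vp=0$ distributionally. For $\subset$, take $w\in\cW_\Om$ and identify it with its zero-extension in $\cW\subset W_0^6(\curl;\R^3)$. Since $\R^3$ is simply connected, $\curlop w=0$ there yields a potential $\Phi\in W^{1,6}_{\mathrm{loc}}(\R^3)$ with $\nabla\Phi=w$. Restricting to the bounded Lipschitz domain $\Om$ gives $\nabla\Phi\in L^6(\Om,\R^3)$; adjusting $\Phi$ by an additive constant and invoking the Poincar\'e--Wirtinger inequality places $\Phi|_\Om\in W^{1,6}(\Om)$, as required.

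For $\cW_\Om=\nabla W_0^{1,6}(\Om)$ under the assumption that $\partial\Om$ is connected, the $\supset$ direction follows by approximating $\vp\in W_0^{1,6}(\Om)$ by $\vp_n\in\cC_0^\infty(\Om)$: then $\nabla\vp_n\in\cC_0^\infty(\Om,\R^3)\subset W_0^6(\curl;\Om)$ with $\curlop\nabla\vp_n=0$, so $\nabla\vp\in\cW_\Om$ as the limit in the $W^6(\curl)$-norm. For the reverse inclusion, the key topological input is that a bounded Lipschitz domain with connected boundary has connected complement $\R^3\setminus\ol\Om$ (Alexander duality, or the correspondence between components of $\partial\Om$ and of $\R^3\setminus\ol\Om$ for bounded Lipschitz $\Om$). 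Since $\nabla\Phi=w=0$ outside $\Om$, $\Phi$ is globally constant on that connected complement; subtracting this constant, we may take $\Phi\equiv 0$ on $\R^3\setminus\ol\Om$. Because $6>3$, the Morrey embedding $W^{1,6}_{\mathrm{loc}}(\R^3)\hookrightarrow C^{0,1/2}_{\mathrm{loc}}(\R^3)$ makes $\Phi$ continuous across $\partial\Om$, so its trace vanishes and $\Phi|_\Om\in W_0^{1,6}(\Om)$, yielding $w\in\nabla W_0^{1,6}(\Om)$.

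The main obstacle is the last paragraph: one needs the topological assertion that $\partial\Om$ connected implies $\R^3\setminus\ol\Om$ connected, together with the regularity upgrade via Morrey embedding which promotes the $L^6$-gradient primitive $\Phi$ to a continuous function whose boundary trace vanishes. Everything else reduces to bookkeeping with extension by zero and the simple-connectedness of $\R^3$.
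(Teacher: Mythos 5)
Your proof is correct. For the first two equalities you follow the same route as the paper: extend by zero (observing along a $\cC_0^\infty(\Om,\R^3)$-approximating sequence that the $\curl$ of the zero-extension equals the zero-extension of the $\curl$), invoke the existence of a scalar potential $\Phi\in W^{1,6}_{\mathrm{loc}}(\R^3)$ for the curl-free extended field (the paper cites Leinfelder's Lemma 1.1; you appeal to simple-connectedness of $\R^3$ --- same content), and restrict to $\Om$, adjusting by a constant via Poincar\'e--Wirtinger. The genuine divergence is the connected-$\partial\Om$ case. The paper stays inside $\Om$: since $w\in W_0^6(\curl;\Om)\subset H_0(\curl;\Om)$, the tangential trace $\nu\times w$ vanishes, hence the surface gradient $\nabla_S\psi=(\nu\times\nabla\psi)\times\nu$ vanishes, hence the trace of $\psi$ is constant on the connected boundary, and Monk's trace theory is quoted to conclude $\psi\in W_0^{1,6}(\Om)$ after subtracting that constant. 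You instead work on the zero-extension over $\R^3$: Alexander duality shows $\R^3\setminus\ol\Om$ is connected when $\partial\Om$ is, so $\Phi$ is a single constant there; after normalizing, the Morrey embedding $W^{1,6}_{\mathrm{loc}}(\R^3)\hookrightarrow C^{0,1/2}_{\mathrm{loc}}(\R^3)$ (valid because $6>3$) makes $\Phi$ continuous across $\partial\Om$ with vanishing boundary values, so $\Phi|_\Om\in W_0^{1,6}(\Om)$. Your route avoids the surface-gradient and tangential-trace machinery and is more self-contained; the price is a topological input and reliance on the exponent $6$ exceeding the dimension $3$, whereas the paper's trace argument is exponent-agnostic and would also apply in regimes where the Morrey step fails.
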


\begin{proof}
	Let $w\in\cW_{\Om}$ and take a sequence $(\vp_n)\subset\cC_0^{\infty}(\Om,\R^3)$ such that $\vp_n\to w$ in $W_0^6(\curl;\Om)$. Extend $\vp_n$ by $0$ in $\R^3\setminus\Om$ and note that $(\vp_n)$ is a Cauchy sequence, so $\vp_n\to \wt w$ in $W^6_0(\R^3,\R^3)$ where $\wt w|_\Om=w$ and $\wt w=0$ in $\r3\setminus\Om$.
As 
	$$\int_{\R^3}\langle \wt w,\curlop \psi\rangle\,dx=\lim_{n\to\infty}
	\int_{\R^3}\langle \vp_n,\curlop \psi\rangle\,dx=
	\lim_{n\to\infty}
	\int_{\R^3}\langle \curlop\vp_n,\psi\rangle\,dx\leq 
	\lim_{n\to\infty} |\curlop \vp_n|_2|\psi|_2=0$$
	for any $\psi\in \cC_0^{\infty}(\R^3,\R^3)$, it follows that $ \wt w\in\cW$. Moreover, since $\wt w\in L^6(\R^3,\R^3)$ and $\curlop \wt w =0$, in view of \cite[Lemma 1.1]{Le} we obtain $\wt w=\nabla \psi$ for some $\psi \in W^{1,6}_{loc}(\R^3)$. Therefore $w=\nabla \psi|_{\Om}\in\nabla W^{1,6}(\Om)$. Clearly, $W^6_0(\curl;\Om)\cap \cW$ and  $W^6_0(\curl;\Om)\cap\nabla W^{1,6}(\Om)$ are contained in $\cW_\Om$.
	
	Suppose that $\partial\Om$ is connected. Similarly as above, we obtain $w=\nabla \psi$ for some $\psi \in W^{1,6}(\Om)$ and the surface gradient
	$$\nabla_S \psi=(\nu \times \nabla \psi)\times \nu=0.$$
	Therefore we may assume that $\psi\in W^{1,6}_0(\Om)$, cf. \cite[Theorem 4.3 and Remark 4.4]{Monk}.
\end{proof}

\section{General concentration-compactness analysis in $\rn$}
\label{sec:con-com}

In this, self-contained, section we have $N\ge 3$ and we work in subspaces of $L^{2^*}(\rn,\rn)$ where $2^* := 2N/(N-2)$.

Let $\Om$ be a domain in $\rn$, $\cV$ a closed subspace of $\cD^{1,2}(\R^N,\R^N)$ and 
\begin{equation}\label{def:W}
\W:=\big\{ w=(w_1,...,w_N)\in L^{2^*}(\Om,\R^N): \curlop w = 0\big\}
\end{equation}
where $\curlop w$ denotes the skew-symmetric, matrix-valued distribution having $\partial_k w_l-\partial_l w_k\in \cD'(\Om)$ as matrix elements. So for $N=3$, $\cW$ corresponds to $\cW_\Om$ in Section \ref{sec:setting} but $\cV$ may be a more general subspace. Note that $\curlop $ is the usual curl operator if $N=3$. Let $Z$ be a finite-dimensional subspace of $L^{2^*}(\Om,\R^N)$ such that $Z\cap \cW = \{0\}$ and put 
\[
\wt{\cW} := \cW\oplus Z.
\]
Assume
\begin{itemize}
	\item[(F1)] $F:\Om\times\R^N\to\R$ is differentiable with respect to the second variable $u\in\R^N$ for a.e. $x\in\Om$, $F(x,0)=0$ and $f=\pa_uF:\Om\times\R^N\to\R^N$ is a Carath\'eodory function (i.e., $f$ is measurable in $x\in\Om$ for all $u\in\rn$ and continuous in $u\in\R^N$ for a.e.\ $x\in\Om$);
	\item[(F2)] $F$ is uniformly strictly convex with respect to $u\in\R^N$, i.e.\ for any compact set $A\subset(\R^N\times\R^N)\setminus\{(u,u):\;u\in\R^N\}$
	$$
	\inf_{\genfrac{}{}{0pt}{}{x\in\Om}{(u_1,u_2)\in A}}
	\left(\frac12\big(F(x,u_1)+F(x,u_2)\big)-F\left(x,\frac{u_1+u_2}{2}\right)\right) > 0;
	$$
	\item[(F3)] There are $c_1,c_2>0$ and $a\in L^{N/2}(\Om)$, $a\ge 0$, such that
	$$c_1|u|^{2^*}\leq F(x,u)\quad\hbox{and }|f(x,u)|\le a(x)|u|+c_2|u|^{2^*-1}$$
	for every $u\in\R^N$ and a.e. $x\in\Om$.
\end{itemize}

In view of (F2) and (F3), for any $v\in\cV$ we find a unique $\wt w_\Om(v)\in \wt{\cW}$ such that
\begin{equation}\label{eq:ineqF}
\int_{\Om}F(x,v+\wt w_\Om(v))\,dx \le \int_{\Om}F(x,v+\wt w)\,dx \quad \text{for all } \wt w\in \wt\cW.
\end{equation}
This implies that
\begin{equation} \label{eq:eqf}
\int_{\Om}\langle f(x,v+\wt w),\zeta\rangle\,dx  =0 \quad \text{for all } \zeta\in\wt\cW \text{ if and only if } \wt w=\wt w_\Om(v).
\end{equation}
Denote the space of finite measures in $\rn$ by $\cM(\rn)$.

\begin{Th}\label{Th:Concentration}
	Assume that (F1)--(F3) are satisfied. 
	Suppose $(v_n)\subset\cV$,  $v_n\weakto v_0$ in $\cV$, $v_n\to v_0$ a.e. in $\rn$, $|\nabla v_n|^2\weakto \mu$ and $|v_n|^{2^*}\rh \rho$
	 in $\cM(\R^N)$. Then
	there exists an at most countable set $I\subset\R^N$ and nonnegative weights $\{\mu_x\}_{x\in I}$, $\{\rho_x\}_{x\in I}$ such that
	$$
	\mu\geq |\nabla v_0|^2+\sum_{x\in I}\mu_x\delta_x, \quad \rho = |v_0|^{2^*} + \sum_{x\in I}\rho_x\delta_x,
	$$
	 and passing to a subsequence, $\wt w_\Om(v_n)\rh \wt w_\Om(v_0)$ in $\wt\cW$, $\wt w_\Om(v_n)\to \wt w_\Om(v_0)$ a.e. in $\Om$ and in $L^p_{loc}(\Om)$ for any $1\leq  p<2^*$.
\end{Th}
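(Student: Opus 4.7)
The plan splits naturally into the Lions-type atomic decomposition of the measures $\mu$ and $\rho$, and the identification and convergence of the minimizers $\wt w_n := \wt w_\Om(v_n)$. For the decomposition, I would apply the second concentration--compactness lemma of P.-L.\ Lions to the bounded sequence $v_n - v_0$ in $\cD^{1,2}(\R^N,\R^N)$, which converges weakly to $0$ and a.e. The Brezis--Lieb identity identifies the limit measure of $|v_n|^{2^*}$ as $|v_0|^{2^*}$ plus a purely atomic measure supported on a countable set $I\subset\R^N$, giving the equality for $\rho$, while weak lower semicontinuity of $u\mapsto |\nabla u|_2^2$ combined with the atomic bookkeeping for $v_n-v_0$ yields the lower bound $\mu\geq |\nabla v_0|^2+\sum_{x\in I}\mu_x\delta_x$.

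For the $\wt w_n$ part, the first step is $L^{2^*}$-boundedness: taking $\wt w=0$ in \eqref{eq:ineqF} gives $\int F(x,v_n+\wt w_n)\,dx \leq \int F(x,v_n)\,dx$; the upper bound in (F3), together with Sobolev embedding and $a\in L^{N/2}(\Om)$, bounds the right-hand side, and the lower bound $F\geq c_1|u|^{2^*}$ in (F3) then bounds $v_n+\wt w_n$ and hence $\wt w_n$ in $L^{2^*}$. Since $\cW$ is weakly closed in $L^{2^*}(\Om,\R^N)$ (it is the kernel of $\curlop$ acting into distributions) and $Z$ is finite-dimensional with $\cW\cap Z=\{0\}$, the direct sum $\wt\cW$ is closed with continuous projections, and a subsequence converges weakly, $\wt w_n\weakto \wt w^*\in\wt\cW$.

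The core step is to show $\wt w^*=\wt w_\Om(v_0)$ and that the convergence holds a.e. Set $u_n:=v_n+\wt w_n$ and $u^*:=v_0+\wt w^*$, so that $u_n\weakto u^*$ in $L^{2^*}$. I would combine the minimization inequality $\int F(x,u_n)\,dx\leq \int F(x,v_n+\wt w_\Om(v_0))\,dx$ with a generalized Brezis--Lieb decomposition, valid under the critical growth in (F3), using $v_n\to v_0$ a.e. Because the concentration-point contributions to $\int F(x,v_n+\wt w)\,dx$ depend only on $v_n$, they cancel on the two sides of the minimization inequality. Passing to $\liminf$ on the left via weak lower semicontinuity of the convex functional $\wt z\mapsto \int F(x,v_0+\wt z)\,dx$, one obtains $\int F(x,u^*)\,dx \leq \int F(x,v_0+\wt w_\Om(v_0))\,dx$, and uniqueness of the minimizer from strict convexity (F2) forces $\wt w^*=\wt w_\Om(v_0)$. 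Equality then holds throughout the chain, so the convexity defect
$$\tfrac12\bigl(F(x,u_n)+F(x,u^*)\bigr)-F\bigl(x,\tfrac12(u_n+u^*)\bigr)$$
tends to $0$ in $L^1_{loc}(\Om)$; the uniformity in (F2), applied after truncating to ranges $|u_n|,|u^*|\leq M$, then promotes this to $u_n\to u^*$ in measure, hence a.e.\ along a further subsequence. Combined with $v_n\to v_0$ a.e., this gives $\wt w_n\to \wt w_\Om(v_0)$ a.e., and the $L^p_{loc}$ convergence for $1\leq p<2^*$ follows from a.e.\ convergence together with the uniform $L^{2^*}$ bound by Vitali's theorem.

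The main obstacle is the combined identification-and-a.e.-convergence step just described. Two bookkeeping tasks are delicate: (a) the Brezis--Lieb cancellation of the energy concentrated at the atoms of $\rho$, which crucially uses that those atoms are inherited from $v_n$ and therefore appear identically on both sides of the minimization inequality irrespective of the admissible $\wt w$ tested against; and (b) upgrading from integrated equality in the convexity inequality to a.e.\ convergence, which relies on the \emph{uniform} strict convexity (F2) and a truncation reducing matters to a compact subset of $\R^N\times\R^N$ on which the convexity defect admits a uniform positive lower bound.
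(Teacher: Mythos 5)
Your Step 1 (Lions atomic decomposition) and the $L^{2^*}$-boundedness of $\wt w_n:=\wt w_\Om(v_n)$ match the paper's argument, and the weak-closedness remark is fine. The genuine gap is in the central identification-and-a.e.-convergence step, and it is a circularity rather than a fixable bookkeeping issue. You write that the ``concentration-point contributions to $\int F(x,v_n+\wt w)\,dx$ depend only on $v_n$, and so cancel on the two sides of the minimization inequality.'' On the right side, with $\wt w=\wt w_\Om(v_0)$ \emph{fixed}, the sequence $v_n+\wt w_\Om(v_0)$ does converge a.e., so Brezis--Lieb applies and the defect is controlled by $\lim\int F(x,v_n-v_0)\,dx$. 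But on the left side, $u_n=v_n+\wt w_n$ is only known to converge \emph{weakly} in $L^{2^*}$; without a.e.\ convergence of $\wt w_n$ (which is exactly the statement to be proved) you cannot apply any Brezis--Lieb decomposition there, nor can you identify its concentration contribution with that of $v_n$ alone. What weak lower semicontinuity of the convex functional actually gives you is only
\[
\int F(x,u^*)\,dx \;\leq\; \liminf_n \int F(x,u_n)\,dx \;\leq\; \int F(x,v_0+\wt w_\Om(v_0))\,dx \;+\; \lim_n\int F(x,v_n-v_0)\,dx,
\]
and the last (nonnegative) term does not cancel; when concentration occurs it is strictly positive, so you cannot conclude $\int F(x,u^*)\leq\int F(x,v_0+\wt w_\Om(v_0))$ and the uniqueness argument collapses. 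The subsequent ``equality throughout the chain'' and the passage from integrated convexity defect to convergence in measure are then without foundation, since no equality is obtained.

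The paper avoids this circularity by a quite different mechanism: it never decomposes the energy. Instead, it uses the Euler--Lagrange characterization \eqref{eq:eqf} of the minimizer and tests it against the admissible variations $\nabla\big(|\vp|^{2^*}(\xi_n-\xi)\big)\in\cW$, where $\xi_n$ are local scalar potentials of the $\cW$-component $w_n$ obtained via Leinfelder's lemma and the Poincar\'e inequality on balls $B_l$. This gives a \emph{local} monotonicity-type estimate in which, after H\"older, the error is controlled by $\big(\int|\vp|^{2^*}\,d\bar\rho\big)^{1/2^*}$. Uniform strict convexity (F2) is then used pointwise, via the quantity $m_k>0$ in \eqref{mrR}, to bound from below the integrand on the sets $\Om_{n,k}$; because $\bar\rho$ is supported on the countable set $I$, one may remove small open neighborhoods of $I$ and obtain $|\Om_{n_k,k}\cap B_l|\to 0$ along a diagonal subsequence, which yields a.e.\ convergence directly, without ever presupposing it. Your proposal correctly identifies the ingredients (F2), Vitali, and the role of $I$, but the mechanism that actually extracts a.e.\ convergence from strict convexity in the paper is this localized Euler--Lagrange estimate, not an energy cancellation. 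You will need to replace your cancellation step with an argument of this type.
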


\begin{Rem}  \emph{
We shall use this theorem in Sections \ref{sec:Om=R3} and \ref{sec:bn}. In Section \ref{sec:Om=R3} we have $\Om=\r3$ and $Z=\{0\}$, so $\wt w = w$ and we will write $w(v)$ for $w_{\r3}(v)$.  In Section \ref{sec:bn}, where we treat a Brezis-Nirenberg problem, $\Om$ will be bounded and $Z$ the subspace of $\cV_{\Om}$ on which the quadratic part of $J_\lambda$ (see \eqref{eq:defOfJ}) is negative semidefinite.
}
\end{Rem} 

\medskip

\begin{altproof}{Theorem \ref{Th:Concentration}}
{\em Step 1.} Let $\vp\in\cC_0^{\infty}(\R^N)$. By the Sobolev inequality,  
	\begin{eqnarray}
	\Big(\int_{\R^N}|\vp|^{2^*}|v_n-v_0|^{2^*}\,dx\Big)^{1/2^*}\label{eq:standardLions}
	&\leq&
	S^{-1/2}\Big(\int_{\R^N}|\nabla [\vp(v_n-v_0)]|^{2}\,dx\Big)^{1/2} \\ \nonumber
	&=&
	S^{-1/2}\Big(\int_{\R^N}|\vp|^2|\nabla (v_n-v_0)|^{2}\,dx\Big)^{1/2}+o(1).
	\end{eqnarray}
	Passing to the limit and using the Brezis-Lieb lemma \cite{BrezisLieb,Willem} on the left-hand side above we obtain
	\begin{equation} \label{eq:standardLions2}
	\Big(\int_{\R^N}|\vp|^{2^*}d\bar\rho\Big)^{1/2^*} \le S^{-1/2}\Big(\int_{\R^N}|\vp|^2\,d\bar{\mu}\Big)^{1/2}
	\end{equation}
	where 
	$\bar{\mu}:=\mu-|\nabla v_0|^2$ and $\bar{\rho}:=\rho-|v_0|^{2^*}$. Set $I=\{x\in\R^N: \mu_x:=\mu(\{x\})>0\}$. Since $\mu$ is finite and $\mu, \bar\mu$ have the same singular set,  $I$ is at most countable and $\mu\geq |\nabla v_0|^2+\sum_{x\in I}\mu_x\delta_x$. As in the proof of Theorem 1.9 in \cite{ev} it follows from \eqref{eq:standardLions2} that $\bar\rho = \sum_{x\in I}\rho_x\delta_x$, see also Proposition 4.2 in \cite{wa}. So $\mu$ and $\rho$ are as claimed.
	 
	 {\em Step 2.} Using (F3) and \eqref{eq:ineqF} we infer that
	\begin{eqnarray*}
	c_1|v_n+\wt w_\Om(v_n)|_{2^*}^{2^*}&\leq& \int_{\Om} F(x,v_n+\wt w_\Om(v_n))\leq \int_{\Om} F(x,v_n)\,dx \\
	&\leq& c_2 |v_n|_{2^*}^{2^*} + |a|_{N/2}|v_n|_{2^*}^2,
	\end{eqnarray*}
	and since the right-hand side above is bounded, so is $(|\wt w_\Om(v_n)|_{2^*})$. Hence, up to a subsequence, $\wt w_\Om(v_n)\rh \wt w_0$ for some $\wt w_0$. Write $\wt w_\Om(v_n) = w_n+z_n$, $\wt w_0=w_0+z_0$ where $w_n,w_0\in\cW$ and $z_n,z_0\in Z$.  We shall show that $\wt w_\Om(v_n)\to \wt w_0$ a.e. in $\Om$ after taking subsequences. Obviously, we may assume  $z_n\to z_0$ in $Z$ and a.e. in $\Om$.
	
	  We can find a sequence of open balls $(B_{l})_{l=1}^\infty$
	such that $\Om=\bigcup_{l=1}^\infty B_l$. Fix $l\geq 1$.
	In view of \cite[Lemma 1.1]{Le} there exists $\xi_n\in W^{1,2^*}(B_l)$ such that $w_n=\nabla\xi_n$ and we may assume without loss of generality that $\int_{B_l}\xi_n\,dx = 0$. Then by the Poincar\'e inequality,
	\[
	\|\xi_n\|_{W^{1,2^*}(B_l)} \le C|w_n|_{L^{2^*}(B_l,\R^N)} \le C|w_n|_{2^*}
	\]
	and passing to a subsequence, $\xi_n\rh \xi$ for some $\xi\in W^{1,2^*}(B_l)$. So $\xi_n\to \xi$ in $L^{2^*}(B_l)$. 
	Now take any $\vp\in\cC_0^{\infty}(B_l)$.
	Since $\nabla (|\vp|^{2^*}(\xi_n-\xi))\in\cW$, in view of \eqref{eq:eqf} we get
	$$\int_{\Om}\langle f(x,v_n+\wt w_\Om(v_n)),\nabla (|\vp|^{2^*} (\xi_n-\xi))\rangle\,dx = 0,$$
	that is, 
	\[
	\int_{\Om}|\vp|^{2^*}\langle f(x,v_n+\wt w_\Om(v_n)),w_n-\nabla\xi\rangle \,dx 
	=\int_{\Om}\langle f(x,v_n+\wt w_\Om(v_n)),\nabla (|\vp|^{2^*})(\xi-\xi_n)\rangle\,dx,
	\]
	where the right-hand side tends to $0$ as $n\to\infty$. Since $w_n\rh \nabla\xi$ in $L^{2^*}(B_l)$,  
	$$
	\int_{\Om}|\vp|^{2^*}\langle f(x,v_0+\nabla\xi+z_0),\, w_n-\nabla\xi\rangle \,dx =o(1),
	$$
	hence, recalling that $\wt w_\Om(v_n)=w_n+z_n$ and $z_n\to z_0$, we obtain
	\begin{equation} \label{eq:est1}
	\int_{\Om}|\vp|^{2^*}\langle f(x,v_n+\wt w_\Om(v_n))-f(x,v_0+\nabla\xi+z_0),\, \wt w_\Om(v_n)-\nabla\xi-z_0\rangle \,dx =o(1).
	\end{equation}
	The convexity of $F$ in $u$ implies that 
	$$F\Big(x,\frac{u_1+u_2}{2}\Big)\geq F(x,u_1)+ \Big\langle f(x,u_1),\,\frac{u_2-u_1}{2}\Big\rangle$$
	and
	$$F\Big(x,\frac{u_1+u_2}{2}\Big)\geq F(x,u_2)+\Big\langle f(x,u_2),\, \frac{u_1-u_2}{2}\Big\rangle.$$
	Adding these inequalities and using (F2), we obtain for any $k\geq 1$ and $|u_1-u_2|\ge \frac1k$, $|u_1|, |u_2| \le k$ that
	\begin{equation} \label{mk}
	m_{k} \le \frac12(F(x,u_1)+F(x,u_2)) - F\Big(x,\frac{u_1+u_2}2\Big)\le \frac14\langle f(x,u_1)-f(x,u_2),\, u_1-u_2\rangle
	\end{equation}
	where
	\begin{equation} \label{mrR}
	m_{k}:=\inf_{\substack{x\in\Om,u_1,u_2\in\R^N\\ \frac1k\leq|u_1-u_2|,\\|u_1|,|u_2|\leq k} }\;
	\frac{1}{2}(F(x,u_1)+F(x,u_2))-F\Big(x,\frac{u_1+u_2}{2}\Big)>0.
	\end{equation} 
	Let 
	$$\Omega_{n,k}:=\Big\{x\in \Om: |v_n+\wt w_\Om(v_n)-v_0-\nabla\xi-z_0|\geq  \frac1k \hbox{ and }|v_n+\wt w_\Om(v_n)|,|v_0+\nabla\xi+z_0|\leq k\Big\}.$$
	Taking into account \eqref{eq:est1} and using (F3), \eqref{mk} and H\"older's inequality, we get
	\begin{eqnarray*}
		&&4m_k\int_{\Omega_{n,k}}|\vp|^{2^*}\,dx \\ 
		&& \quad \le \int_{\Om}|\vp|^{2^*}\langle f(x,v_n+\wt w_\Om(v_n))-f(x,v_0+\nabla\xi+z_0), \, v_n+\wt w_\Om(v_n)-v_0-\nabla\xi-z_0\rangle \,dx\\
		&& \quad \le \int_{\Om}|\vp|^{2^*}\langle f(x,v_n+\wt w_\Om(v_n))-f(x,v_0+\nabla\xi+z_0), \, v_n-v_0\rangle \,dx +o(1)\\
		&& \quad \le C \Big(\int_{\Om}|\vp|^{2^*}|v_n-v_0|^{2^*}\,dx\Big)^{1/2^*} +o(1) = 
		C \Big(\int_{\Om}|\vp|^{2^*}\,d\bar{\rho}\Big)^{1/2^*}+o(1),
	\end{eqnarray*}
where $k$ is fixed. Here we have used the fact that $\io a(x)|v_n-v_0|^2\,dx \to 0$ if $v_n\rh v_0$ in $L^{2^*}(\Om,\rn)$.  
	Since $\vp\in\cC_0^{\infty}(B_l)$ is arbitrary, 
	\begin{equation}\label{eq:Borel}
	4m_k|\Omega_{n,k}\cap E|\leq \big(\bar{\rho}(E)\big)^{1/2^*}+o(1)
	\end{equation}
	for any Borel set $E\subset B_l$. We find an open set $E_k\supset I$ such that $|E_k|<1/2^{k+1}$. Then, taking $E=B_l\setminus E_k$ in \eqref{eq:Borel}, we have
	$4m_k|\Omega_{n,k}\cap (B_l\setminus E_k)|=o(1)$ as $n\to\infty$ because $\supp(\bar\rho)\subset I$; hence 
	we can find a sufficiently large $n_k$ such that $|\Omega_{n_k,k}\cap B_l|<1/2^k$ and we obtain
	$$\Big|\bigcap_{j=1}^{\infty}\bigcup_{k=j}^{\infty}\Omega_{n_k,k}\cap B_l\Big| \le \lim_{j\to\infty}\sum_{k=j}^{\infty}|\Omega_{n_k,k}\cap B_l|\leq \lim_{j\to\infty}\frac{1}{2^{j-1}}=0.$$
	If $x\notin \bigcap_{j=1}^{\infty}\bigcup_{k=j}^{\infty}\Omega_{n_k,k}$ and $x\in B_l$, then
	\begin{eqnarray*}
	&&|v_{n_k}(x)+\wt w_\Om(v_{n_k})(x)-v_0(x)-\nabla\xi(x)-z_0(x)|< \frac1k,\hbox{ or }|v_{n_k}(x)+\wt w_\Om(v_{n_k})(x)|>k,\\
	&&\hbox{ or }|v_0(x)+\nabla\xi(x)+z_0(x)|> k
	\end{eqnarray*}
for all sufficiently large $k$. Since $v_{n_k}+\wt w_\Om(v_{n_k})$ is bounded in $L^{2^*}(\Om,\R^N)$, the second and the third inequality above cannot hold on a set of positive measure for all large $k$. 
We infer that $v_{n_k}+\wt w_\Om(v_{n_k})\to v_0+\nabla\xi+z_0$, hence $\wt w_\Om(v_{n_k})\to \nabla\xi+z_0$  a.e. in $B_l$. Since $\wt w_\Om(v_n)\weakto \wt w_0$, $\wt w_0=\nabla \xi+z_0$ a.e. in $B_l$. Now employing the diagonal procedure, we find a subsequence of $\wt w_\Om(v_n)$ which converges to $\wt w_0$ a.e. in $\Om=\bigcup_{l=1}^{\infty}B_l$. 

Let $p\in [1,2^*)$. For $\Omega'\subset \Omega$ such that $|\Omega'|<+\infty$ we have
	\begin{equation*}
	\int_{\Om'} |v_n-v_0+\wt w_\Om(v_n)-\wt w_0|^p\,dx
	\leq |\Omega'|^{1-\frac{p}{2^*}}
	\Big(\int_{\Om}|v_n-v_0+\wt w_\Om(v_n)-\wt w_0|^{2^*}\,dx\Big)^{\frac{p}{2^*}},
	\end{equation*}
	hence by the Vitali convergence theorem, $v_n-v_0+\wt w_\Om(v_n)-\wt w_0\to 0$ in $L^p_{loc}(\Omega)$ after passing to a subsequence.
	
	{\em Step 3.} We show that $\wt w_\Om(v_0)=\wt w_0$.
	Take any $\wt w\in\wt{\cW}$ and observe that by  the Vitali convergence theorem, 
	$$0 = \int_{\Om}\langle f(x,v_n+\wt w_\Om(v_n)), \wt w\rangle\,dx\to \int_{\Om}\langle f(x,v_0+\wt w_0), \wt w\rangle\,dx$$
up to a subsequence. Now \eqref{eq:eqf} implies that $\wt w_0=\wt w_\Om(v_0)$ which completes the proof.
\end{altproof}

\section{Problem in $\Om=\R^3$ and proof of Theorem \ref{Th:main1}} \label{sec:Om=R3}

Let $S$ be the  best Sobolev constant  for the embedding of $\cD^{1,2}(\R^3)$ into $L^6(\R^3)$, see \eqref{se}. It is clear that a minimizer $w(u)$ in \eqref{eq:ineqF} exists uniquely for any $u\in W_0^6(\curl;\Om)$, not only for $u\in \cV$. Here we have $F(x,u)=\frac16|u|^6$ and $Z=\{0\}$. So by Lemma \ref{defof}, $u+w(u)=v+w(v)\in \cV\oplus\cW$ for some $v\in\cV$ and therefore
\begin{equation} \label{eqeq}
\inf_{w\in\cW} \ir3 |u+w|^6\,dx = \ir3|u+w(u)|^6\,dx = \ir3|v+w(v)|^6\,dx.
\end{equation}
Since $\div(v)=0$,
\begin{equation} \label{eq:scurl}
S_\curl = \inf_{\substack{u\in W_0^6(\curl;\r3) \\ \curlop u\neq 0}} \frac{|\nabla\times u|_2^2}{|u+w(u)|_6^2} = \inf_{v\in \cV\setminus\{0\}} \frac{|\nabla v|_2^2}{|v+w(v)|_6^2}.
\end{equation}

\begin{Lem}\label{lemmaScurlS}
$S_{\curl}\ge S$.
\end{Lem}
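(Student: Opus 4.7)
The plan is to exploit the formula \eqref{eq:scurl}, which reduces $S_{\curl}$ to an infimum over the divergence-free subspace $\cV\subset\cD^{1,2}(\r3,\r3)$, and then combine two elementary facts: the minimizing property of $w(v)$ and the scalar Sobolev inequality applied componentwise.

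First I would fix an arbitrary $v\in\cV\setminus\{0\}$. Since $0\in\cW$, the defining minimizing property of $w(v)$ (namely \eqref{eq:ineqF} with $F(x,u)=\tfrac16|u|^6$ and $Z=\{0\}$) immediately gives
\[
\int_{\r3}|v+w(v)|^6\,dx \;\le\; \int_{\r3}|v|^6\,dx,
\]
hence $|v+w(v)|_6\le|v|_6$. This is the only consequence of the Helmholtz projection I need at this stage.

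Next I would establish the vectorial Sobolev inequality
\[
|\nabla v|_2^2 \;\ge\; S\,|v|_6^2 \qquad \text{for every } v\in\cD^{1,2}(\r3,\r3).
\]
Writing $v=(v_1,v_2,v_3)$, Minkowski's inequality applied to the sum $\sum_i v_i^2$ in $L^3(\r3)$ yields
\[
|v|_6^2 \;=\; \Bigl\| \textstyle\sum_i v_i^2\Bigr\|_{L^3(\r3)} \;\le\; \sum_i \|v_i^2\|_{L^3(\r3)} \;=\; \sum_i |v_i|_6^2,
\]
and the scalar Sobolev inequality \eqref{se} in $\r3$ (applied to each component $v_i\in\cD^{1,2}(\r3)$) gives $|\nabla v_i|_2^2\ge S|v_i|_6^2$. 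Summing over $i$ yields $|\nabla v|_2^2=\sum_i|\nabla v_i|_2^2\ge S\sum_i|v_i|_6^2\ge S|v|_6^2$.

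Combining these two ingredients:
\[
\frac{|\nabla v|_2^2}{|v+w(v)|_6^2} \;\ge\; \frac{|\nabla v|_2^2}{|v|_6^2} \;\ge\; S.
\]
Taking the infimum over $v\in\cV\setminus\{0\}$ and invoking \eqref{eq:scurl} yields $S_{\curl}\ge S$. There is no real obstacle here: the argument is a short chain of a minimization comparison and Minkowski/Sobolev. The genuinely difficult statement, namely the strict inequality $S_{\curl}>S$ asserted in Theorem \ref{Th:main1}(a), will require a separate (and much harder) argument based on the fact that instanton-like optimizers for $S$ are not compatible with the curl-free subtraction, but that is not needed for the present lemma.
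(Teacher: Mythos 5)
Your proof is correct and follows essentially the same route as the paper: both rely on the minimizing property of $w(v)$ (so $|v+w(v)|_6\le|v|_6$) together with the componentwise scalar Sobolev inequality, combined via the elementary estimate $\sum_i|v_i|_6^2\ge|v|_6^2$. The only cosmetic differences are that you derive that last estimate from Minkowski's inequality in $L^3$ rather than by expanding $(v_1^2+v_2^2+v_3^2)^3$ and applying H\"older term by term as in \eqref{eq:3}--\eqref{eq:4}, and that you bound the Rayleigh quotient directly for every $v$ instead of using an $\eps$-approximate minimizer; both are minor simplifications of the same argument.
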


\begin{proof}
Given $\eps>0$, by \eqref{eq:scurl} we can find $v\ne 0$ such that
\begin{equation} \label{eq:5}
\int_{\R^3}|\nabla v|^2\, dx\le (S_{\curl}+\eps)\Big( \int_{\R^3}|v+w(v)|^6\,dx\Big)^{\frac13}.
\end{equation}
Let $v=(v_1,v_2,v_3)$. By the H\"older inequality,
\begin{equation} \label{eq:3}
\ir3 v_1^2v_2^2v_3^2\,dx \le \Big(\ir3v_1^6\,dx \ir3v_2^6\,dx \ir3v_3^6\,dx\Big)^{\frac13}
\end{equation}
and
\begin{equation} \label{eq:4}
\ir3 v_i^4v_j^2\,dx \le \Big(\ir3 v_i^6\Big)^{\frac23}\Big(\ir3 v_j^6\,dx\Big)^{\frac13}, \quad i\ne j.
\end{equation}
Using this and the Sobolev inequality gives
\begin{equation}\label{eq:1}
\int_{\R^3}|\nabla v|^2\,dx\geq S\sum_{i=1}^3\Big(\int_{\R^3}|v_i|^6\,dx\Big)^{1/3}\geq S\Big(\int_{\R^3}|v|^6\,dx\Big)^{1/3},
\end{equation}
and since $w(v)$ is a minimizer, we obtain using \eqref{eq:5} and \eqref{eq:1} 
\begin{eqnarray} \label{eq:2}
\int_{\R^3}|\nabla v|^2\, dx & \le & (S_{\curl}+\eps) \Big(\int_{\R^3}|v+w(v)|^6\,dx\Big)^{\frac13} \le (S_{\curl}+\eps) \Big(\int_{\R^3}|v|^6\,dx\Big)^{\frac13} \\
& \le & (S_\curl+\eps)/S\int_{\R^3}|\nabla v|^2\, dx. \nonumber
\end{eqnarray}
Hence $S_{\curl}+\eps\geq S$ for all $\eps>0$ and the conclusion follows.
\end{proof}

Next we look for ground states for the  curl-curl problem \eqref{eq}, i.e. nontrivial solutions
with  least possible associated  energy  $J$ given by \eqref{eq:action}. Throughout the rest of the paper we shall make repeated use of the following fact:

\begin{Lem} \label{stretching}
Let $\lambda>0$. Then $w(\lambda u) = \lambda w(u)$. Similarly, if $\Om$ is a proper subset of $\r3$, then $w_\Om(\lambda u) = \lambda w_\Om(u)$.
\end{Lem}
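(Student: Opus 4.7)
The plan is to exploit two structural facts: (i) $\cW$ (respectively $\cW_\Om$) is a linear subspace of $W_0^6(\curl;\R^3)$ (respectively $W_0^6(\curl;\Om)$), so that $\lambda \cW = \cW$ for any $\lambda \neq 0$, and (ii) the functional $w \mapsto \int_{\R^3}|u+w|^6\,dx$ scales homogeneously of degree $6$ under simultaneous scaling of $u$ and $w$. Combined with the uniqueness of the minimizer, which follows from the uniform strict convexity of $F(x,u)=\frac16|u|^6$ as invoked in the proof of Theorem \ref{Th:Concentration} (corresponding to assumption (F2) with $Z=\{0\}$), these two facts force $w(\lambda u)=\lambda w(u)$.

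Concretely, the first step is to recall that $w(u)$ is characterized as the \emph{unique} element of $\cW$ minimizing $w\mapsto \ir3 |u+w|^6\,dx$; uniqueness follows from strict convexity of $F(x,\cdot)$ together with the fact that $\cW$ is a (closed) linear subspace. Now fix $\lambda>0$ and consider the change of variables $w=\lambda w'$ in the minimization for $\lambda u$:
\begin{equation*}
\inf_{w\in\cW}\ir3|\lambda u+w|^6\,dx = \inf_{w'\in\cW}\ir3|\lambda u+\lambda w'|^6\,dx = \lambda^6 \inf_{w'\in\cW}\ir3|u+w'|^6\,dx,
\end{equation*}
where the first equality uses $\lambda\cW=\cW$. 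The infimum on the right is attained uniquely at $w'=w(u)$, so the infimum on the left is attained uniquely at $w=\lambda w(u)$. By definition of $w(\lambda u)$, this gives $w(\lambda u)=\lambda w(u)$.

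The proof for $w_\Om(\lambda u)=\lambda w_\Om(u)$ is identical word for word, with $\R^3$ replaced by $\Om$ and $\cW$ replaced by $\cW_\Om$ (which is also a linear subspace by its definition in Section \ref{sec:setting}, invariant under multiplication by any nonzero scalar). No obstacle is expected here; the only point that needs to be noted is that the argument relies on $\lambda\neq 0$ so that $\lambda \cW=\cW$, and the positivity $\lambda>0$ stated in the lemma is more than sufficient. (In fact the same computation shows the conclusion extends to all $\lambda\in\R\setminus\{0\}$, though it is only the positive case that will be used in the sequel.)
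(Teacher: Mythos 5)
Your proof is correct and follows essentially the same idea as the paper's: both exploit the homogeneity of degree $6$, the fact that $\cW$ (respectively $\cW_\Om$) is a linear subspace so $\lambda\cW=\cW$, and the uniqueness of the minimizer. The paper phrases it as a chain of inequalities showing $w_\Om(\lambda u)/\lambda$ is also a minimizer for $u$, while you phrase it as a change of variables relating the two minimization problems directly, but the content is identical.
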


\begin{proof}
We prove this for $w_\Om$. Using the minimizing property of $w_\Om(u)$ we obtain
\begin{eqnarray*}
\lambda^6\io|u+w_\Om(u)|^6\,dx & = & \io|\lambda u+\lambda w_\Om(u)|^6\,dx \ge \io|\lambda u+w_\Om(\lambda u)|^6\,dx \\
& = & \lambda^6\io |u+w_\Om(\lambda u)/\lambda|^6\,dx \ge \lambda^6\io|u+w_\Om(u)|^6\,dx.
\end{eqnarray*}
Since the minimizer is unique, $w_\Om(u) = w_\Om(\lambda u)/\lambda$ as claimed.
\end{proof}

\begin{Lem} \label{equivnehari}
Let $\cN$ be the set defined in \eqref{def:Neh}. Then 
\begin{equation} \label{descrneh}
\cN =\{u\in W^6_0(\curl;\R^3)\setminus\cW: J'(u)u=0 \text{ and } J'(u)|_\cW=0\}. 
\end{equation}
\end{Lem}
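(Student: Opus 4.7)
The plan is to compute $J'(u)$ explicitly on two classes of test vectors, $u$ itself and elements $w\in\cW$, and then translate the resulting scalar conditions into the two conditions defining $\cN$.

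First I would record that $J$ is of class $\cC^1$ on $W^6_0(\curl;\R^3)$, with
\[
J'(u)\varphi = \int_{\R^3}\langle\curlop u,\curlop\varphi\rangle\,dx - \int_{\R^3}|u|^4\langle u,\varphi\rangle\,dx
\]
for every $\varphi\in W^6_0(\curl;\R^3)$; this uses only Cauchy--Schwarz and H\"older. Setting $\varphi=u$ gives
\[
J'(u)u = \int_{\R^3}|\curlop u|^2\,dx - \int_{\R^3}|u|^6\,dx,
\]
so $J'(u)u=0$ is manifestly the first condition in the definition of $\cN$.

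Next, for $w\in\cW$ we have $\curlop w=0$, hence
\[
J'(u)w = -\int_{\R^3}|u|^4\langle u,w\rangle\,dx.
\]
Thus $J'(u)|_\cW=0$ is equivalent to $\int_{\R^3}|u|^4\langle u,w\rangle\,dx=0$ for every $w\in\cW$. It remains to show that this is equivalent to $\div(|u|^4u)=0$ in the sense of distributions. For $\varphi\in\cC_0^{\infty}(\R^3)$, the gradient $\nabla\varphi$ lies in $\cW$, and integration by parts in the sense of distributions gives
\[
\int_{\R^3}|u|^4\langle u,\nabla\varphi\rangle\,dx = -\langle \div(|u|^4u),\varphi\rangle,
\]
which makes sense because $|u|^4u\in L^{6/5}(\R^3,\R^3)$ since $u\in L^6(\R^3,\R^3)$. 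Hence $\div(|u|^4u)=0$ distributionally is exactly the vanishing of $J'(u)w$ on the subspace $\{\nabla\varphi:\varphi\in\cC_0^{\infty}(\R^3)\}$.

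Finally, to lift this from gradients of test functions to all of $\cW$, I would invoke the remark following Lemma \ref{defof}: $\cW$ is the closure of $\{\nabla\varphi:\varphi\in\cC_0^{\infty}(\R^3)\}$ in $W^6_0(\curl;\R^3)$, and in particular in $L^6(\R^3,\R^3)$. Since $|u|^4u\in L^{6/5}(\R^3,\R^3)$, the linear functional $w\mapsto\int_{\R^3}|u|^4\langle u,w\rangle\,dx$ is continuous on $L^6(\R^3,\R^3)$, so vanishing on the dense subset of gradients is equivalent to vanishing on all of $\cW$. Combining these observations yields the claimed identity \eqref{descrneh}. The only minor technical point is the duality argument in the last step, which is straightforward once density is in hand; no serious obstacle arises.
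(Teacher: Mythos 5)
Your proof is correct and follows essentially the same route as the paper: compute $J'(u)$ on $u$ and on elements of $\cW$, reduce $J'(u)|_\cW = 0$ to the vanishing of $\int_{\R^3}\langle |u|^4u,\nabla\varphi\rangle\,dx$ for test functions, and conclude by the density of $\{\nabla\varphi:\varphi\in\cC_0^\infty(\R^3)\}$ in $\cW$ (the remark after Lemma~\ref{defof}) together with $|u|^4u\in L^{6/5}$. You simply spell out the duality step that the paper's one-line proof leaves implicit.
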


\begin{proof}
The first condition in \eqref{def:Neh} is equivalent to $J'(u)u=0$. The second condition is satisfied because $\div(|u|^4u)=0$ if and only if $\ir3\langle |u|^4u,\nabla\vp\rangle\,dx = 0$ for all $\vp\in \cC_0^\infty(\r3)$ and each element of $\cW$ can be approximated by such $\vp$, see the comment preceding Subsection \ref{sectionOm}.  
\end{proof}

By Lemma \ref{defof}, $W_0^6(\curl;\R^3) = \cV\oplus\cW$.
It follows from \eqref{eq:ineqF} and \eqref{eq:eqf} that if $v\in\cV$, then $J'(v+w(v))|_\cW=0$, and as 
\begin{equation} \label{eq:jtu}
J(t(v+w(v))) = \frac{t^2}2\ir3|\nabla v|^2\,dx - \frac{t^6}6\ir3|v+w(v)|^6\,dx,
\end{equation}
there is a unique $t(v)>0$ such that
\begin{equation} \label{eq:m}
m(v):=t(v)(v+w(v))\in\cN\quad\hbox{for }v\in \cV\setminus\{0\}.
\end{equation}
We note that
\begin{equation} \label{maximizing}
J(m(v)) \ge J(t(v+w)) \quad \text{for all } t>0 \text{ and }w\in\cW.
\end{equation}
Since $J(m(v)) \ge J(v)$ and there exist  $a,r>0$ such that $J(v)\ge a$ if $\|v\|=r$, $\cN$ is bounded away from $\cW$ and hence closed.

\begin{Lem}\label{lem:m_continuous}
The mapping $m: \cV\setminus\{0\}\to\cN$ given by \eqref{eq:m} is continuous. 
\end{Lem}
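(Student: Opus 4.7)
The plan is to prove continuity of $m$ in two main steps: first show $v\mapsto w(v)$ is continuous from $\cV\setminus\{0\}$ (with the $\cV$-topology) into $L^6(\r3,\r3)$, then show $t$ is continuous, and finally assemble these into convergence in the $W^6(\curl;\r3)$-norm. I fix a sequence $v_n\to v$ in $\cV\setminus\{0\}$; by the equivalence of norms on $\cV$ (Lemma \ref{defof}) this gives $v_n\to v$ in $L^6(\r3,\r3)$ and $|\curlop v_n|_2\to|\curlop v|_2$.

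Step 1 (uniform bound and weak subsequential limit). Using the minimizing property of $w(v_n)$ tested against the admissible $w(v)\in\cW$,
\[
|v_n+w(v_n)|_6 \le |v_n+w(v)|_6 \le |v_n|_6+|w(v)|_6,
\]
so $(|w(v_n)|_6)$ is bounded. Pick a subsequence with $w(v_n)\rh w^*$ in $L^6$. Since $\cW$ is convex and closed in $L^6$ (being the kernel of $\curlop$ in the distributional sense), it is weakly closed, so $w^*\in\cW$. By weak lower semicontinuity of the $L^6$-norm and the minimization property,
\[
|v+w^*|_6 \le \liminf_{n\to\infty}|v_n+w(v_n)|_6 \le \liminf_{n\to\infty}|v_n+w(v)|_6 = |v+w(v)|_6,
\]
and uniqueness of the minimizer forces $w^*=w(v)$. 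Since every subsequence has a further subsequence converging weakly to the same limit, $w(v_n)\rh w(v)$ in $L^6$ along the whole sequence.

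Step 2 (upgrade to strong convergence; this is the main obstacle). Combining the displayed inequality with the reverse bound coming from weak lower semicontinuity yields $|v_n+w(v_n)|_6\to|v+w(v)|_6$. Since $L^6$ is uniformly convex, the Radon--Riesz property applies: weak convergence $v_n+w(v_n)\rh v+w(v)$ together with convergence of the norms gives strong convergence $v_n+w(v_n)\to v+w(v)$ in $L^6$, and therefore $w(v_n)\to w(v)$ in $L^6$. This is the delicate step, because without strict convexity one could only conclude weak convergence, which is insufficient for the rest of the argument.

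Step 3 (continuity of $t$ and conclusion). Solving $J'(t(v+w(v)))(v+w(v))=0$ using \eqref{eq:jtu} gives the explicit formula
\[
t(v)^4 = \frac{|\curlop v|_2^2}{|v+w(v)|_6^6}.
\]
Since $v\ne 0$ forces $v+w(v)\ne 0$ (because $\cV\cap\cW=\{0\}$ by Lemma \ref{defof}), the denominator is bounded away from $0$ near $v$, and $t(v_n)\to t(v)$ follows from $|\curlop v_n|_2\to|\curlop v|_2$ and $|v_n+w(v_n)|_6\to|v+w(v)|_6$. Combining,
\[
m(v_n)=t(v_n)\bigl(v_n+w(v_n)\bigr)\longrightarrow t(v)\bigl(v+w(v)\bigr)=m(v)
\]
strongly in $L^6(\r3,\r3)$, while $\curlop m(v_n)=t(v_n)\curlop v_n\to t(v)\curlop v=\curlop m(v)$ in $L^2(\r3,\r3)$. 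Hence $m(v_n)\to m(v)$ in $W^6_0(\curl;\r3)$, completing the proof.
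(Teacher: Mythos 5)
Your proof is correct. It follows the same broad strategy as the paper's argument — bound $(w(v_n))$ in $L^6$ using the minimization property, extract a weak limit, show convergence of the $L^6$-norms, and upgrade to strong convergence via uniform convexity of $L^6$ (Radon--Riesz) — but it routes the details more directly. The paper works through the functional $J$ in \eqref{eq:jtu}: it uses the ray-maximization property \eqref{maximizing} to obtain a chain of inequalities for $J(t(v_n)(v_n+w(v_n)))$ that must collapse to equalities, and then invokes closedness of $\cN$ to identify the limit $t_0(v_0+w_0)$ with $m(v_0)$. You instead argue purely with the $L^6$-minimization defining $w(v)$ (comparing against the fixed competitor $w(v_0)$ rather than $0$) and identify $w^*=w(v_0)$ from uniqueness of the minimizer; then you compute $t(v)$ explicitly from $t(v)^4=|\curlop v|_2^2/|v+w(v)|_6^6$ and deduce $t(v_n)\to t(v_0)$ by continuity of numerator and denominator, avoiding the appeal to closedness of $\cN$. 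Both are valid; yours is a bit more elementary and self-contained. One minor presentational point: in Step 2, the phrase about combining with ``the reverse bound from weak lower semicontinuity'' is a little confusing since weak lsc already gave the first inequality in Step 1; what you actually need is $\liminf_n|v_n+w(v_n)|_6\ge|v_0+w(v_0)|_6$ (weak lsc, using $v_n+w(v_n)\rh v_0+w(v_0)$) together with $\limsup_n|v_n+w(v_n)|_6\le\limsup_n|v_n+w(v_0)|_6=|v_0+w(v_0)|_6$ (minimality), and it would be cleaner to state this pair explicitly.
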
 

\begin{proof}
Let $v_n\to v_0\ne 0$ in $\cV$. Since 
\begin{equation} \label{eq:ab}
\ir3|v_n+w(v_n)|^6\,dx \le \ir3|v_n|^6\,dx, 
\end{equation}
it follows that $(w(v_n))$ is bounded and it is then clear from \eqref{eq:jtu} that so is $(t(v_n))$. Hence we may assume $t(v_n)\to t_0$ and $w(v_n)\weakto w_0$ in $L^6(\R^3,\R^3)$.
By the  weak sequential lower semicontinuity of  the second integral in \eqref{eq:jtu} and by \eqref{maximizing}, 
$$
J(t_0(v_0+w_0))\geq \limsup_{n\to\infty} J(t(v_n)(v_n+w(v_n)))\geq \limsup_{n\to\infty} J(t_0(v_n+w_0))
=J(t_0(v_0+w_0)).
$$
So $w(v_n)\to w_0$ and since $\cN$ is closed, $t_0(v_0+w_0) = t(v_0)(v_0+w(v_0)) = m(v_0)$.
\end{proof}

Now it is easily seen  that $m|_\cS:\cS:=\{v\in\cV: \|v\|=1\}\to\cN$ is a homeomorphism with the inverse $u=v+w(v)\mapsto v/\|v\|$.
 Note that $\cN$ is an infinite-dimensional topological manifold of infinite codimension.
Although  $J$ is of class $\cC^2$, we do not know whether $\cN$ is of class $\cC^1$. However, repeating the argument  in \cite[Proposition 4.4(b)]{Mederski} or \cite[Proposition 2.9]{SzulkinWeth} we see that $J\circ m|_{\cS}:\cS\to\R$ is of class $\cC^1$  and is bounded from below by the constant $a>0$ introduced above. By the Ekeland variational principle \cite[Theorem 8.5]{Willem}, there is a Palais-Smale sequence $(v_n)\subset \cS$ such that 
\begin{equation} \label{gea}
(J\circ m)(v_n)\to \inf_{\cS} J\circ m=\inf_{\cN}J\geq a>0.
\end{equation}
It follows from \cite[Proposition 4.4(b)]{Mederski} again or from \cite[Corollary 2.10]{SzulkinWeth}
that $(m(v_n))$ is a Palais-Smale sequence for $J$ on $\cN$, so in particular, $J'(m(v_n))\to 0$ as $n\to\infty$. See also an abstract critical point theory on the generalized Nehari manifold in \cite[Section 4]{BartschMederski1} and  in\cite[Section 4]{BartschMederski2}.

For $s>0$, $y\in\R^3$ and $u:\R^3\to\R^3$ we denote $T_{s,y}(u):= s^{1/2}u(s\cdot +y))$. 
The following lemma is a special case of \cite[Theorem 1]{Solimini}, see also \cite[Lemma 5.3]{Tintarev}.

\begin{Lem}\label{lem:Solimini}
Suppose that $(v_n)\subset \cD^{1,2}(\r3,\r3)$ is bounded. Then $v_n\to 0$ in $L^6(\R^3,\R^3)$ if and only if $T_{s_n,y_n}(v_n)\weakto 0$ in $\cD^{1,2}(\r3,\r3)$ for all $(s_n)\subset\R^+$ and $(y_n)\subset \R^3$.
\end{Lem}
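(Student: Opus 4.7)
The plan is to prove the two implications separately; the forward direction is a short scaling computation while all the substance lies in the backward one.

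\medskip

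For the forward direction, I would first observe that $T_{s,y}$ is a linear isometry of both $(\cD^{1,2}(\r3,\r3),|\nabla\cdot|_2)$ and $(L^6(\r3,\r3),|\cdot|_6)$: a direct change of variables yields $|\nabla T_{s,y}(u)|_2=|\nabla u|_2$ and $|T_{s,y}(u)|_6=|u|_6$, reflecting the scale-criticality of the Sobolev embedding $\cD^{1,2}\hookrightarrow L^6$. Hence, for any $(s_n)\subset(0,\infty)$ and $(y_n)\subset\r3$, the rescaled sequence $T_{s_n,y_n}(v_n)$ is bounded in $\cD^{1,2}$ and satisfies $|T_{s_n,y_n}(v_n)|_6 = |v_n|_6 \to 0$. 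Any $\cD^{1,2}$-weakly convergent subsequence is also $L^6$-weakly convergent by Sobolev embedding, so its weak limit must vanish. By reflexivity of $\cD^{1,2}$ and the standard subsequence principle, the full sequence satisfies $T_{s_n,y_n}(v_n)\weakto 0$.

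\medskip

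For the backward direction I would argue by contrapositive: assume $v_n\not\to 0$ in $L^6$, pass to a subsequence with $|v_n|_6\ge c>0$, and produce $(s_n),(y_n)$ such that $T_{s_n,y_n}(v_n)$ has a nontrivial weak limit in $\cD^{1,2}$. The key tool is a multiscale concentration argument: introduce a scale-sensitive concentration function such as
$$
Q_n(s):=\sup_{y\in\r3}\int_{B_{1/s}(y)}|v_n|^6\,dx,\qquad s>0,
$$
and combine the Sobolev inequality on balls with a dyadic covering argument to derive a refined estimate, schematically of the form
$$
|v_n|_6^{6}\;\le\; C\,|\nabla v_n|_2^{\alpha}\,\bigl(\sup_{s>0} Q_n(s)\bigr)^{\beta}
$$
for appropriate exponents $\alpha,\beta>0$ (equivalently, a Bahouri--G\'erard type inequality involving the Besov norm $\dot B^{-1}_{\infty,\infty}$). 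Because $|\nabla v_n|_2$ is bounded while $|v_n|_6\ge c$, $\sup_{s>0}Q_n(s)$ stays bounded below uniformly in $n$. Selecting $(s_n,y_n)$ that nearly realize this supremum, and noting that $T_{s_n,y_n}$ sends $B_{1/s_n}(y_n)$ onto a ball of fixed radius centered at the origin, we see that $T_{s_n,y_n}(v_n)$ carries a definite amount of $L^6$-mass inside that fixed ball. Extracting a weak limit in $\cD^{1,2}$ and using the local Rellich compactness of $\cD^{1,2}\hookrightarrow L^p$ on bounded sets for $p<6$ to upgrade to a.e.\ convergence, one obtains a non-zero weak limit $v_0$, contradicting the hypothesis.

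\medskip

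The main obstacle is establishing the refined Sobolev estimate and verifying that the chosen rescaling does capture a definite fraction of the $L^6$-mass. Because the $L^6$-norm is invariant under both translations \emph{and} dilations, the classical subcritical Lions vanishing lemma (which uses only translations over balls of a fixed radius) is insufficient: one must simultaneously optimize over the scales $s_n$. This multiscale covering argument is precisely the content of Solimini's theorem in \cite{Solimini} and of \cite[Lemma~5.3]{Tintarev}, so in the final write-up I would simply invoke those references for this step rather than reproduce the argument in full.
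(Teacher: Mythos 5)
The paper gives no proof of this lemma: it simply observes that the statement is a special case of Solimini \cite[Theorem~1]{Solimini} (see also \cite[Lemma~5.3]{Tintarev}) and notes that Solimini's $H^{1,2}$ is the same space as $\cD^{1,2}$. Your proposal does ultimately cite the same references for the substantive implication, so the final disposition is compatible with the paper, and your proof of the forward implication (that $T_{s,y}$ is an isometry of $\cD^{1,2}$ and of $L^6$, so the weak limit of any $\cD^{1,2}$-convergent subsequence must vanish by Sobolev embedding, hence the whole sequence converges weakly to $0$) is correct and a useful addition.

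However, the sketch of the backward implication has a soft spot in the final step. You claim that after selecting $(s_n,y_n)$ nearly realizing the concentration functional, the rescaled sequence carries a definite $L^6$-mass in a fixed ball, and that a.e.\ convergence coming from the local Rellich embedding $\cD^{1,2}\hookrightarrow L^p_{loc}$ for $p<6$ then forces the weak limit $v_0$ to be nonzero. This step is not sound as written: a bounded sequence can converge to $0$ in $L^p(B_1)$ for every $p<6$ and a.e.\ while retaining $\int_{B_1}|\cdot|^6\ge\delta$ (the unscaled Aubin–Talenti instantons $U_{\eps_n,0}$ already do this). Local $L^p$-compactness for subcritical $p$ gives no control on critical mass, and Fatou's lemma goes the wrong way. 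In the standard profile-extraction arguments (Solimini, G\'erard, Bahouri--G\'erard, Tintarev--Fieseler) the nontriviality of the limit is instead obtained from a \emph{mollified pointwise} quantity of $\dot B^{-1}_{\infty,\infty}$ type: the refined Sobolev inequality bounds $\sup_{s,y}s^{1/2}|(\chi_s*v_n)(y)|$ below, one rescales so that $(\chi*T_{s_n,y_n}(v_n))(0)\ge c>0$, and this convolution \emph{is} stable under weak $\cD^{1,2}$-convergence, which is what forces $v_0\ne0$. Since you explicitly defer to \cite{Solimini,Tintarev} for this step, the gap is one of exposition rather than of ultimate correctness, but the mechanism you describe for extracting a nonzero limit would not survive scrutiny if written out in full.
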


Observe that the above lemma in \cite{Solimini} is expressed in terms of the space $H^{1,2}$. However, in the notation of \cite{Solimini}, this is the same space as our  $\cD^{1,2}$.

\begin{Lem} \label{isom}
$T_{s,y}$ is an isometric isomorphism of $W_0^6(\curl;\r3)$ which leaves the functional $J$ and the subspaces $\cV, \cW$ invariant. In particular, $w(T_{s,y}u)=T_{s,y}w(u)$.
\end{Lem}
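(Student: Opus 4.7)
The plan is to verify each claim by direct change of variables, exploiting the particular scaling factor $s^{1/2}$ built into $T_{s,y}$, which is chosen precisely so that the critical Sobolev norms are preserved.

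First I would check that $T_{s,y}$ is an isometry. Since $T_{s,y}u(x)=s^{1/2}u(sx+y)$, the substitution $z=sx+y$ gives $|T_{s,y}u|_6^6=s^3\int|u(sx+y)|^6\,dx=|u|_6^6$. For the curl term, differentiating componentwise yields $(\curlop T_{s,y}u)(x)=s^{3/2}(\curlop u)(sx+y)$, and the same substitution produces $|\curlop T_{s,y}u|_2^2=|\curlop u|_2^2$. Hence $T_{s,y}$ preserves $\|\cdot\|_{W^6(\curl;\R^3)}$. It is clearly linear, and a direct calculation shows its inverse is $T_{s^{-1},-y/s}$, so $T_{s,y}$ is a bijective isometry. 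Because $T_{s,y}$ sends $\cC_0^\infty(\R^3,\R^3)$ into itself and $W^6(\curl;\R^3)=W_0^6(\curl;\R^3)$ by Lemma \ref{density}, this isometry acts on $W_0^6(\curl;\R^3)$.

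Invariance of $J$ is immediate from the two norm identities above. For $\cW$, if $\curlop u=0$ then $(\curlop T_{s,y}u)(x)=s^{3/2}(\curlop u)(sx+y)=0$, so $T_{s,y}\cW\subset\cW$, and applying the same to $T_{s,y}^{-1}$ gives equality. For $\cV$, take $u\in\cV$ and any test field $\vp\in\cC_0^\infty(\R^3,\R^3)$ with $\curlop\vp=0$. The change of variables $z=sx+y$ gives
\begin{equation*}
\int_{\R^3}\langle T_{s,y}u,\vp\rangle\,dx=s^{-5/2}\int_{\R^3}\langle u(z),\vp((z-y)/s)\rangle\,dz,
\end{equation*}
and the rescaled test field $\tilde\vp(z):=\vp((z-y)/s)$ is still in $\cC_0^\infty(\R^3,\R^3)$ with $\curlop\tilde\vp=s^{-1}(\curlop\vp)(\cdot)=0$, so the integral vanishes. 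Hence $T_{s,y}u\in\cV$, and symmetry again gives $T_{s,y}\cV=\cV$.

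Finally, for the transformation rule $w(T_{s,y}u)=T_{s,y}w(u)$, I would use uniqueness of the minimizer. Since $T_{s,y}$ maps $\cW$ bijectively onto itself and preserves the $L^6$ norm, for every $\omega\in\cW$
\begin{equation*}
\int_{\R^3}|T_{s,y}u+\omega|^6\,dx=\int_{\R^3}|T_{s,y}(u+T_{s,y}^{-1}\omega)|^6\,dx=\int_{\R^3}|u+T_{s,y}^{-1}\omega|^6\,dx,
\end{equation*}
which is minimized exactly when $T_{s,y}^{-1}\omega=w(u)$, i.e.\ $\omega=T_{s,y}w(u)$. By uniqueness of $w(T_{s,y}u)$ the identity follows. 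I do not anticipate a genuine obstacle here; the only mild subtlety is keeping track of the exponents in the change of variables so that the scaling factors cancel — this is where the choice $s^{1/2}$ (the $\cD^{1,2}$-critical scaling in dimension three) is essential.
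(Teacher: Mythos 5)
Your proof is correct and takes exactly the approach the paper indicates (the paper merely remarks ``the proof is by an explicit (and simple) computation''), working out each change of variables with the correct scaling exponents and then invoking uniqueness of the $L^6$-minimizer $w(\cdot)$ for the final identity. Nothing is missing.
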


The proof is by an explicit (and simple) computation.

\begin{Lem} \label{equiv}
Suppose $u+w(u)\in \cN$. Then 
\[
\frac{|\nabla\times u|_2^2}{|u+w(u)|_6^2} = A \quad \text{if and only if} \quad J(u+w(u)) = \frac13 A^{3/2}.
\]
In particular, $\inf_\cN J = \frac13S_\curl^{3/2}$.
\end{Lem}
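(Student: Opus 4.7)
The plan is to observe that on $\cN$, the nonlinear equation $\int|\curlop u|^2\,dx=\int|u+w(u)|^6\,dx$ collapses $J$ to a single monomial in one of these quantities, after which both assertions become bookkeeping.

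First I would exploit that $w(u)\in\cW$ and therefore $\curlop(u+w(u))=\curlop u$. The defining relation of $\cN$ in \eqref{def:Neh} then reads $|\curlop u|_2^2=|u+w(u)|_6^6$. Substituting into \eqref{eq:action} gives
\[
J(u+w(u)) \;=\; \tfrac12|\curlop u|_2^2-\tfrac16|u+w(u)|_6^6 \;=\; \tfrac13|u+w(u)|_6^6.
\]
At the same time the ratio in the lemma becomes
\[
A\;=\;\frac{|\curlop u|_2^2}{|u+w(u)|_6^2}\;=\;\frac{|u+w(u)|_6^6}{|u+w(u)|_6^2}\;=\;|u+w(u)|_6^4,
\]
so $|u+w(u)|_6^6=A^{3/2}$ and hence $J(u+w(u))=\tfrac13 A^{3/2}$; conversely, reading the same chain of equalities in the other direction gives the reverse implication. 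This settles the first (equivalence) part.

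For the identity $\inf_\cN J=\tfrac13 S_\curl^{3/2}$, the step just proved reduces the task to showing that
\[
\inf_{u+w(u)\in\cN}\frac{|\curlop u|_2^2}{|u+w(u)|_6^2}\;=\;S_\curl.
\]
Here I would use the Helmholtz decomposition of Lemma~\ref{defof}: if $u=v+\tilde w$ with $v\in\cV$ and $\tilde w\in\cW$, then by uniqueness of the minimizer one has $w(u)=w(v)-\tilde w$, whence $u+w(u)=v+w(v)$ and $\curlop u=\curlop v$. Thus the ratio above equals $|\nabla v|_2^2/|v+w(v)|_6^2$ (using that the $\cD^{1,2}$- and curl-norms agree on $\cV$), which is exactly the Rayleigh quotient in \eqref{eq:scurl}. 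Conversely, for any $v\in\cV\setminus\{0\}$, the map $m$ from \eqref{eq:m} produces $t(v)(v+w(v))\in\cN$ with the same ratio by the scaling relation in Lemma~\ref{stretching}. Taking infima yields the desired equality.

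I do not expect a real obstacle: the only substantive ingredients beyond algebra are (i) the decomposition $u+w(u)=v+w(v)$, which is just the uniqueness of the $L^6$-minimizer combined with $\cV\oplus\cW$, and (ii) the invariance of the Rayleigh quotient under the positive rescaling used to project onto $\cN$. Both have already been established in the preceding material.
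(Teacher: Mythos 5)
Your proof is correct and the first part (the equivalence) is exactly the paper's own argument: the Nehari constraint $|\curlop u|_2^2=|u+w(u)|_6^6$ collapses both $J(u+w(u))$ and the Rayleigh quotient to powers of $|u+w(u)|_6$. For the ``In particular'' claim the paper's proof stops after the equivalence and treats the rest as a direct reading of \eqref{eq:scurl}; you spell out the missing bookkeeping --- that $u+w(u)=v+w(v)$ with $\curlop u=\curlop v$ via the Helmholtz decomposition and uniqueness of the minimizer, and that the quotient is invariant under the rescaling $m(v)=t(v)(v+w(v))$ --- which is precisely what the paper is implicitly relying on. One small wording point: the norms $|\nabla\cdot|_2$ and $|\curlop\cdot|_2$ do not merely ``agree up to equivalence'' on $\cV$ as Lemma~\ref{defof} states; they are literally equal there because $\div v=0$ gives $-\Delta v=\curlop(\curlop v)$, and this exact equality (already used tacitly in \eqref{eq:scurl} and \eqref{ac}) is what your computation needs.
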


\begin{proof}
Since $u+w(u)\in \cN$, $J'(u)u=0$, i.e. $|\nabla\times u|_2^2 = |u+w(u)|_6^6$. Hence 
\[
\frac{|\nabla\times u|_2^2}{|u+w(u)|_6^2} =  |u+w(u)|_6^4 \quad \text{and} \quad J(u+w(u)) = \frac13|u+w(u)|_6^6.
\] 
\end{proof}

\medskip 

\begin{altproof}{Theorem \ref{Th:main1}} We prove part (b) first. 
Take a minimizing sequence $(u_n) = (m(v_n))\subset \cN$  constructed above and write $u_n=t(v_n)(v_n+w(v_n)) = v_n'+w(v_n')\in\cV\oplus\cW$. As
\begin{equation} \label{ac}
J(u_n) = J(u_n) -\frac16J'(u_n)u_n = \frac13|\curlop u_n|^2_2 = \frac13|\nabla v_n'|^2_2
\end{equation}
and $|\nabla\cdot|_2$ is an equivalent norm in $\cV$, $(v_n')$ is bounded. We also have
\begin{equation} \label{ca}
J(u_n) = J(u_n) -\frac12J'(u_n)u_n  = \frac13|u_n|^6_6.
\end{equation} 
Since $J(u_n)$ is bounded away from 0, $|u_n|_6\not\to 0$ and hence by \eqref{eq:ab}, $|v_n'|_6\not\to 0$. 
Therefore, passing to a subsequence and using Lemma \ref{lem:Solimini}, $\tv_n:=T_{s_n,y_n}(v_n')\weakto v_0$ for some $v_0\neq 0$, $(s_n)\subset\R^+$ and $(y_n)\subset \R^3$. Taking subsequences again we also have that $\tv_n\to v_0$ a.e. in $\R^3$ and in view of Theorem \ref{Th:Concentration}, $w(\tv_n)\weakto w(v_0)$ and $w(\tv_n)\to w(v_0)$ a.e. in $\R^3$. We set $u:=v_0+w(v_0)$ and by Lemma \ref{isom} we may assume without loss of generality that $s_n=1$ and $y_n=0$. So if $z\in W_0^6(\curl;\r3)$, then using weak and a.e. convergence,
\[
J'(u_n)z = \ir3\langle\curlop u_n, \curlop z\rangle\,dx -\ir3\langle|u_n|^4u_n,z\rangle\,dx \to J'(u)z.
\]
Here we have used that $|u_n|^4u_n\rh \zeta$ in $L^{6/5}(\r3,\r3)$ for some $\zeta$ but since $|u_n|^4u_n\to |u|^4u$ a.e., $\zeta = |u|^4u$. So $u$ is a solution to \eqref{eq}. To show it is a ground state, we note that using Fatou's lemma,
\begin{eqnarray*}
\inf_\cN J & = & J(u_n)+o(1) = J(u_n)-\frac12J'(u_n)u_n+o(1) = \frac13|u_n|^6_6 + o(1) \\
& \ge & \frac13|u|^6_6 + o(1) = J(u)-\frac12J'(u)u+o(1) = J(u)+o(1).
\end{eqnarray*}
Hence $J(u)\le \inf_\cN J$ and as a solution, $u\in\cN$. It follows using Lemma \ref{equiv} that $J(u) = \inf_\cN J = \frac13S_\curl^{3/2}$. 

If $u$ satisfies equality in \eqref{eq:neq}, then $t(u)(u+w(u))\in\cN$ and is a minimizer for $J|_\cN$. But then the corresponding point $v$ in $\cS$ is a minimizer for $J\circ m|_\cS$, see \eqref{gea}. So $v$ is a critical point of $J\circ m|_\cS$ and $m(v)=u$ is a critical point of $J$. This completes the proof of (b).

\medskip

(a) By Lemma \ref{lemmaScurlS}, $S_\curl \ge S$ and by part (b), there exists $u=v+w(v)$ for which $S_\curl$ is attained. Suppose $S_\curl=S$. Then all inequalities become equalities in \eqref{eq:2} with $\eps=0$, and therefore also in \eqref{eq:1}.  But then $\ir3|\nabla v_i|^2\,dx = S|v_i|_6^2$ for $i=1,2,3$ and hence all $v_i$ are instantons, up to multiplicative constants. Since $v\ne 0$ and $\div(v)=0$, this is impossible. It follows that $S_\curl>S$. 
\end{altproof}

\section{Proof of Theorems \ref{TheoremS_curl} and \ref{th:Sbar}} \label{sec:nonex}

Let $\Omega$ be a Lipschitz domain in $\r3$. Recall from Section \ref{sec:setting} that we have the Helmholtz decompositions  
\begin{equation} \label{split}
W_0^6(\curl;\r3) = \cV\oplus \cW \quad  \text{and}\quad W_0^6(\curl;\Om) = \cV_\Om\oplus \cW_\Om  
\end{equation}
where the second one holds if condition $(\Om)$ in the introduction is satisfied.
For $u\in W_0^6(\curl;\Om)$, denote the minimizer of 
\[
\io|u+w|^6\,dx, \quad w\in\cW_\Om
\]
by $w_\Om(u)$ (cf. \eqref{eqeq}) and, according to our notational convention, write $w(u)$ for $w_{\r3}(u)$. Recall from \eqref{eq:neq} the definition of $S_\curl(\Om)$:
\[
\ir3|\nabla\times u|^2\,dx \ge S_\curl(\Om)\inf_{w\in\cW}\left(\ir3|u+w|^6\,dx\right)^{1/3}
\]
where $u\in W_0^6(\curl;\Om)\setminus\cW$ and $S_\curl(\Om)$ is the largest constant with this property.
By \eqref{split} we have $u=v+w\in \cV\oplus\cW$. We emphasize that although $u=0$ in $\r3\setminus\ol\Om$, $v$ and $w$ need not be 0 there.
 Note that $S_\curl({\Om)}$ can be characterized as
\begin{equation} \label{sobolev}
S_\curl(\Om) = \inf_{\substack{u\in W_0^6(\curl;\Om)\\ \curlop u\ne 0}} \sup_{w\in \cW} \frac{|\nabla\times u|_2^2}{|u+w|_6^2} =  \inf_{\substack{u\in W_0^6(\curl;\Om)\\ \curlop u\ne 0}} \frac{|\nabla\times u|_2^2}{|u+w(u)|_6^2} 
\end{equation}
(cf. \eqref{eq:scurl}). In domains $\Om\ne \r3$ there is also another constant, $\ol S_\curl(\Om)$, introduced in \eqref{eq:neqOm}. Similarly as in \eqref{sobolev}, it can be characterized as
\begin{equation} \label{sobolev2}
\ol S_\curl(\Om) = \inf_{\substack{u\in W_0^6(\curl;\Om)\\ \curlop u\ne 0}} \sup_{w\in \cW_\Om} \frac{|\nabla\times u|_2^2}{|u+w|_6^2} =  \inf_{\substack{u\in W_0^6(\curl;\Om)\\ \curlop u\ne 0}} \frac{|\nabla\times u|_2^2}{|u+w_\Om(u)|_6^2}. 
\end{equation}
As we have noticed in the introduction, although this constant seems more natural, we do not know whether it equals $S_\curl$.

\begin{Lem} \label{Continuity}
The mapping $u\mapsto w_\Om(u): L^6(\Om,\r3)\to L^6(\Om,\r3)$ is continuous (\,$\Om=\r3$ is admitted).
\end{Lem}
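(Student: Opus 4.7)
The plan is to exploit the strict convexity of the functional $w\mapsto \int_\Om|u+w|^6\,dx$ on the closed convex set $\cW_\Om\subset L^6(\Om,\r3)$, combined with uniform convexity of $L^6$. Let $u_n\to u$ in $L^6(\Om,\r3)$; I will show that $w_\Om(u_n)\to w_\Om(u)$ in $L^6(\Om,\r3)$ along the full sequence via a subsequence argument.

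First I would establish boundedness of $(w_\Om(u_n))$ in $L^6$: testing the minimizing inequality with $w=0$ gives $|u_n+w_\Om(u_n)|_6\le |u_n|_6$, whence $|w_\Om(u_n)|_6\le 2|u_n|_6$ is uniformly bounded. Next, along a subsequence, $w_\Om(u_n)\rh w_0$ weakly in $L^6$. Since $\cW_\Om$ is a linear subspace of $L^6$, it suffices to note it is norm-closed (the condition $\curlop w=0$ passes to $L^6$ limits in the distributional sense, and $W_0^6(\curl;\Om)$ is a Banach space, so convergence in $L^6$ of elements of $\cW_\Om$ is automatically convergence in the $W_0^6(\curl)$ norm because the curl is identically zero), hence weakly closed by Mazur. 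Therefore $w_0\in\cW_\Om$.

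To identify $w_0$, I would use the defining inequality $\int_\Om|u_n+w_\Om(u_n)|^6\,dx\le \int_\Om|u_n+w|^6\,dx$ with the admissible choice $w=w_\Om(u)\in\cW_\Om$. Passing to the $\limsup$ using $u_n\to u$ in $L^6$ gives
$$
\limsup_{n\to\infty}\int_\Om|u_n+w_\Om(u_n)|^6\,dx \le \int_\Om |u+w_\Om(u)|^6\,dx.
$$
On the other hand, $u_n+w_\Om(u_n)\rh u+w_0$ weakly in $L^6$, so the weak lower semicontinuity of the $L^6$ norm yields the reverse inequality
$$
\int_\Om|u+w_0|^6\,dx \le \liminf_{n\to\infty}\int_\Om|u_n+w_\Om(u_n)|^6\,dx \le \int_\Om|u+w_\Om(u)|^6\,dx.
$$
By uniqueness of the minimizer (guaranteed by strict convexity of $t\mapsto|t|^6$), this forces $w_0=w_\Om(u)$, and moreover equality throughout gives $|u_n+w_\Om(u_n)|_6\to |u+w_\Om(u)|_6$.

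Finally, $u_n+w_\Om(u_n)\rh u+w_\Om(u)$ weakly in $L^6$ together with convergence of norms implies strong convergence in $L^6$, since $L^6$ is uniformly convex (the Radon--Riesz/Kadec--Klee property). Subtracting $u_n\to u$ yields $w_\Om(u_n)\to w_\Om(u)$ strongly in $L^6$. A standard subsequence argument (any subsequence admits a further subsequence converging to $w_\Om(u)$, by the reasoning above) upgrades this to convergence of the entire sequence. The only subtle point is the passage from weak plus norm convergence to strong convergence; this is exactly where uniform convexity of $L^6$ enters, and it is what makes the argument work without any regularity improvement on $w_\Om(u_n)$.
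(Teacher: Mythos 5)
Your proof is correct and follows essentially the same route as the paper's: bound $w_\Om(u_n)$, extract a weakly convergent subsequence, pin down the weak limit via the minimizing property of $w_\Om(\cdot)$ together with weak lower semicontinuity of the $L^6$ norm, and upgrade to strong convergence. You are somewhat more explicit than the paper about the weak closedness of $\cW_\Om$, the Kadec--Klee/Radon--Riesz step, and the final subsequence argument, but the underlying argument is the same.
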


\begin{proof}
Let $u_n\to u_0$. Since $(w_\Om(u_n))$ is bounded, $w_\Om(u_n)\rh w_0$ after passing to a subsequence. By the maximality and uniqueness of $w_\Om(\cdot)$,
\begin{eqnarray*}
\io|u_0+w_\Om(u_0)|^6\,dx & \le & \io|u_0+w_0|^6\,dx \le\liminf_{n\to\infty}\io|u_n+w_\Om(u_n)|^6\,dx \\
&\le & \liminf_{n\to\infty} \io|u_n+w_\Om(u_0)|^6\,dx = \io|u_0+w_\Om(u_0)|^6\,dx.
\end{eqnarray*}
Hence all inequalities above must be equalities and it follows that $w_0=w_\Om(u_0)$ and $w_\Om(u_n)\to w_\Om(u_0)$.
\end{proof} 

We shall need the following inequality:
\begin{Lem}\label{lemNehariineq}
	If $u\in W^6_0(\curl;\Om)\setminus\{0\}$, $w\in \W_\Om$ and $t\geq 0$, then
	\begin{equation}\label{bbb1}
	J(u)\geq J(tu+w) -J'(u)\left[\frac{t^2-1}{2}u+tw\right].
	\end{equation}
	Moreover,  strict inequality holds unless $t=1$ and $w=0$. (\,$\Om=\r3$ admitted.)
\end{Lem}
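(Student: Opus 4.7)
The plan is to reduce the assertion to a sharp pointwise inequality and verify it by Cauchy-Schwarz and Young.

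\textbf{Step 1: algebraic reduction.} Since $w \in \cW_\Om$ satisfies $\curlop w = 0$ in $L^2$, we have $\curlop(tu+w) = t\curlop u$ and $\int\langle \curlop u, \curlop w\rangle\,dx = 0$. Expanding $J$ and $J'$ from \eqref{eq:action}, I would check that all curl contributions in
$$\Delta := J(u) - J(tu+w) + J'(u)\Big[\tfrac{t^2-1}{2}u + tw\Big]$$
cancel, leaving
$$\Delta = \tfrac16\int_\Om\Big[|tu+w|^6 - (3t^2-2)|u|^6 - 6t|u|^4\langle u,w\rangle\Big]\,dx.$$
Thus it suffices to establish the pointwise inequality $P(t,a,b) := |ta+b|^6 - (3t^2-2)|a|^6 - 6t|a|^4\langle a,b\rangle \ge 0$ for all $t \ge 0$ and $a, b \in \R^3$.

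\textbf{Step 2: pointwise bound via Cauchy-Schwarz and Young.} Substituting $s := ta+b$, the pointwise expression becomes $P = |s|^6 + (3t^2+2)|a|^6 - 6t|a|^4\langle a,s\rangle$. By Cauchy-Schwarz, $\langle a,s\rangle \le |a||s|$, and Young's inequality applied to $|s|\cdot t|a|^5$ with conjugate exponents $6$ and $6/5$ gives $6t|a|^5|s| \le |s|^6 + 5t^{6/5}|a|^6$. Together these yield $P \ge f(t)|a|^6$ where $f(t) := 3t^2 - 5t^{6/5} + 2$. A direct calculus check shows $f'(t) = 6(t - t^{1/5})$ vanishes only at $t=0,1$, with $f$ strictly decreasing on $(0,1)$ and strictly increasing on $(1,\infty)$; since $f(1)=0$, we conclude $f \ge 0$ on $[0,\infty)$ with equality only at $t=1$.

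\textbf{Step 3: strict inequality.} If $t \ne 1$, then $f(t)>0$, and since $u\not\equiv 0$ in $\Om$, $P \ge f(t)|u|^6>0$ on a set of positive measure, so $\Delta>0$. If $t=1$, then at each $x \in \Om$, $P(1,u(x),w(x)) = |u(x)+w(x)|^6 - |u(x)|^6 - 6|u(x)|^4\langle u(x),w(x)\rangle$ is the remainder in the tangent-plane inequality for the strictly convex map $b\mapsto |b|^6$ on $\R^3$, hence $P>0$ wherever $w(x)\ne 0$ (the case $u(x)=0$ being covered directly since then $P = |w(x)|^6$). Thus $\Delta > 0$ whenever $w\not\equiv 0$.

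The main subtlety is choosing the Young exponents in Step 2: the pair $(6/5, 6)$ is forced by the requirement that the $|s|^6$ terms cancel exactly, leaving a nonnegative function of $t$ alone whose unique zero occurs at the expected point $t=1$. The remaining ingredients — the curl cancellation, the equality analysis, and the calculus for $f$ — are straightforward.
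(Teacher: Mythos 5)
Your computation of $\Delta$ and the pointwise reduction are correct, and the Cauchy--Schwarz plus Young argument is sound: the exponents $(6/5,6)$ do force exact cancellation of the $|s|^6$ terms, leaving $f(t)=3t^2-5t^{6/5}+2$ whose unique nonnegative zero is $t=1$, and the strict-inequality discussion via strict convexity of $b\mapsto|b|^6$ is fine. The paper takes a genuinely different route to the same pointwise inequality. Rather than eliminating the vector dependence upfront, it fixes $x$ (with $u(x)\ne 0$) and studies $\vp(t,x)$ as a one-variable function of $t$: it checks $\vp(0,x)>0$ and $\vp(t,x)\to\infty$ as $t\to\infty$, then computes $\partial_t\vp$ and shows that at any interior critical point $t_0>0$ one necessarily has either $\langle u,t_0u+w\rangle=0$ or $|u|=|t_0u+w|$, and in both cases $\vp(t_0,x)\ge0$, with strictness analyzed case by case. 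Your approach replaces that critical-point bookkeeping with a single chain of inequalities reducing to a scalar calculus check; it is somewhat more streamlined and, as you note, exploits the specific exponent~$6$ so that the Young pair is forced. The paper's argument, on the other hand, is the more flexible template (it carries over directly to general uniformly convex nonlinearities as in the cited \cite{Mederski,MederskiJFA2018}) and does not rely on a lucky cancellation of exponents. Both are complete proofs.
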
 

\begin{proof}
	The proof follows a similar argument as in \cite[Proposition 4.1]{Mederski} and \cite[Lemma 4.1]{MederskiJFA2018}. We include it for the reader's convenience. We show that 
	\begin{equation}\label{eq:B3check}
	J(u)-J(tu+w) +J'(u)\left[\frac{t^2-1}{2}u+tw\right]\\
	= \int_{\R^3}\vp(t,x)\,dx\geq 0,
	\end{equation}
	where
	\begin{equation*}
	\vp(t,x):= -\Big\langle |u|^{4}u,\frac{t^2-1}{2}u+tw\Big\rangle - \frac16|u|^6 + \frac16|tu+w|^6.
	\end{equation*}
	An explicit computation using $\curlop w = 0$ shows that both sides of \eqref{eq:B3check} are equal. Clearly, $\vp(t,x)\ge 0$ if $u(x)=0$. 
	So let $u(x)\neq 0$. It is easy to check that $\vp(0,x) > 0$ and $\vp(t,x)\to\infty$ as $t\to\infty$. Note that if $\partial_t\vp (t_0,x)=0$ for some $t_0>0$, then either  $\langle u,\,t_0u+w\rangle =0$ or $|u|=|t_0u+w|$.  In the first case, substituting $-\langle u,w\rangle = t_0|u|^2$, we obtain $\vp(t_0,x) = \big(\frac{t_0^2}2+\frac13\big)|u|^6+\frac16|t_0u+w|^6>0$. In the second case we have, using $-t_0\langle u, w\rangle = \frac{t_0^2-1}2|u|^2+\frac12|w|^2$, that $\vp(t_0,x) = \frac12|u|^4|w|^2\ge 0$. Hence
	 $\vp(t,x)\geq 0$ for all $t\geq0$ and the inequality is strict if  $w\neq 0$. 
	If $w=0$, then $\vp(t,x)=\big(\frac{t^6}{6}-\frac{t^2}{2}+\frac13\big)|u|^6>0$ provided  $t\ne 1$.
\end{proof}

Similarly as in \eqref{descrneh} we introduce the  set
\begin{equation}\label{def:NehOm}
\cN_\Om:=\Big\{u\in W^6_0(\curl;\Om)\setminus\cW_\Om: J'(u)u=0\hbox{ and }J'(u)|_{\cW_\Om}=0\Big\}.
\end{equation}

\medskip

\begin{altproof}{Theorems  \ref{TheoremS_curl} and \ref{th:Sbar}}
Since $tu+w(tu) = t(u+w(u))$ according to Lemma \ref{stretching}, we may assume without loss of generality that $u+w(u)\in \cN$ in \eqref{sobolev} and similarly, $u+w_\Om(u)\in \cN_\Om$ in \eqref{sobolev2}.  According to Lemma \ref{equiv},
$$\inf_{\cN} J|_{W_0^6(\curl;\Om)} = \frac13S_\curl(\Om)^{\frac32},\quad \inf_{\cN_\Om} J = \frac13\ol S_\curl(\Om)^{\frac32},\quad\inf_\cN J = \frac13S_\curl^{\frac32}.$$
In view of Lemma \ref{lem:W_Om}, $\W_\Om\subset \cW$, hence we easily infer from  \eqref{sobolev}, \eqref{sobolev2} that $S_\curl(\Om)\geq \ol S_\curl(\Om)$. As $W_0^6(\curl;\Om)\subset W_0^6(\curl;\r3)$, it follows that
$S_\curl \le S_\curl(\Om)$.

Next we show that $S_\curl(\Om)\leq S_\curl$.
Let $u_0$ be a minimizer for $J$ on $\cN$ provided by Theorem \ref{Th:main1}(b) and find a sequence $(u_n)\subset \cC_0^\infty(\r3,\r3)$ such that $u_n\to u_0$. We can decompose $u_n$ as $u_n=v_n+w_n$, $v_n\in\cV$, $w_n\in\cW$.
Since $u_0=v_0+w(v_0)$ (recall $u_0\in\cN$), $u_n=v_n+w_n\to u_0=v_0+w(v_0)$ and therefore $v_n\to v_0$, $w_n\to w(v_0)$. So $v_0\ne 0$ and $v_n$ are bounded away from $0$ in $L^6(\R^3,\R^3)$.
Assume without loss of generality that $0\in\Om$. There exist $\lambda_n$ such that $\wt u_n$ given by $\wt u_n(x):=\lambda_n^{1/2}u_n(\lambda_nx)$ are supported in $\Om$. Set $\wt w_n := w(\wt u_n)\in \cW$ and choose $t_n$ so that $t_n(\wt u_n+\wt w_n)\in\cN$. Then
\begin{equation}\label{tn}
t_n^2 = \frac{|\nabla\times \tu_n|_2}{|\tu_n+\wt w_n|_6^3}.
\end{equation}
According to Lemma \ref{isom}, $\|\wt u_n\| = \|u_n\|$ and $|\tu_n+\wt w_n|_6 = |u_n+w(u_n)|_6 = |v_n+w(v_n)|_6$. As $(u_n)$ is bounded, so is $(\wt u_n)$ and as $|v_n+w(v_n)|_6\to |v_0+w(v_0)|_6$, $|\wt u_n+\wt w_n|_6$ is bounded away from 0. So $(t_n)$ is bounded. Moreover, $|\wt w_n|_6 = |w(u_n)|_6$ and therefore $(\wt w_n)$ is bounded.
Since $J(\wt u_n)=J(u_n)\to \frac13S_\curl^{3/2}$ and $\|J'(\wt u_n)\| = \|J'(u_n)\|\to 0$, it follows from Lemma \ref{lemNehariineq} that
\begin{eqnarray*}
	\frac13S_\curl^{3/2} = \lim_{n\to\infty} J(\wt u_n) & \ge & \lim_{n\to\infty} \left(J(t_n(\wt u_n+\wt w_n)) -J'(\wt u_n)\left[\frac{t_n^2-1}{2}\wt u_n+t_n^2\wt w_n\right]\right) \\
	& = & \lim_{n\to\infty} J(t_n(\wt u_n+\wt w_n)) \ge \frac13 S_\curl(\Om)^{3/2}.
\end{eqnarray*}
The last inequality follows from Lemma \ref{equiv} and the fact that $\wt u_n$ are as in \eqref{sobolev}, i.e. $\wt u_n\in W_0^6(\curl;\Om)$.

It remains to show that $\ol S_{\curl}(\Om)\geq S$ if $(\Om)$ is satisfied. But this follows by repeating the argument of Lemma \ref{lemmaScurlS} with obvious changes: $S_\curl$ should be replaced by $\ol S_{\curl}(\Om)$, $w(v)$ by $w_\Om(v)$ and the domain of integration should be $\Om$.
\end{altproof}

\begin{Rem}
\emph{
Let $\Om\ne \r3$ and suppose  $S_\curl(\Om)$ is attained by some $u$. Extend $u$ by 0 outside $\Om$. As $S_\curl(\Om)=S_\curl$,  $u$ also solves \eqref{eq} in $\r3$, possibly after replacing $u$ with $\alpha u$ for an appropriate $\alpha>0$. In particular, if $S_\curl(\Om)$ were attained in a bounded $\Om$, this would imply the existence of ground states in $\r3$ which have compact support. To our knowledge, there is no unique continuation principle which could rule out this possibility.
}
\end{Rem} 

In view of this remark we expect that similarly as is the case for the Sobolev constant, $S_\curl$ is attained if and only if $\Om=\r3$. We leave this problem as a conjecture.

\section{The Brezis-Nirenberg type problem and proof of Theorem \ref{thm:mainBN}} \label{sec:bn}

Let $\lambda\leq 0$. In this section $\Om\subset\r3$ is a fixed bounded domain satisfying $(\Om)$ but $\lambda$ will be varying. Therefore we drop the subscript $\Om$ from notation and replace it by $\lambda$ ($J_\lambda$, $\cN_\lambda$ etc.). We also write $\cV,\cW$ for $\cV_\Om, \cW_\Om$. 

Recall from the introduction and Subsection \ref{sectionOm} that the spectrum of the curl-curl operator in $H_0(\curl;\Om)$ consists of the eigenvalue $\lambda_0=0$ whose eigenspace is $\cW$ and of a sequence of eigenvalues 
$$0<\lambda_1 \le \lambda_2\leq\dots\le\lambda_k\to\infty$$ with finite multiplicities $m(\lambda_k)\in\N$. The eigenfunctions corresponding to different eigenvalues are $L^2$-orthogonal and those corresponding to $\lambda_k>0$ are in $\cV$.

For $\lambda\leq 0$ we find two closed and orthogonal subspaces $\cV^+$ and $\tcV$ of $\cV$ such that the quadratic form $Q:\cV\to\R$
given by
\begin{equation*}\label{eq:DefQ}
Q(v):=\int_\Om (|\curlop v|^2+\lambda |v|^2)\, dx \equiv \int_\Om (|\nabla v|^2+\lambda |v|^2)\, dx 
\end{equation*}
is positive definite on $\cV^+$ and negative semidefinite on $\tcV$ where $\mathrm{dim}\,\tcV<\infty$. Writing $u=v+w=v^++\wt v+w\in \cV^+\oplus\wt\cV\oplus \cW$, we have
\[
Q(v)=Q(v^+)+Q(\wt v)
\]
and our functional $J_\lambda$ (see \eqref{eq:defOfJ}) can be expressed as
\begin{equation*} \label{Jlambda}
J_\lambda(u) = \frac12Q(v^+) + \frac12 Q(\wt v) + \frac{\lambda}2\io|w|^2\,dx - \frac16|u|^6\,dx.
\end{equation*}
 We shall use Theorem \ref{Th:Concentration} with 
\[
F(x,u)=\frac16|u|^6 -\frac{\lambda}2|u|^2.
\] 
Here $\wt\cW := \wt\cV\oplus \cW$ (so $Z=\wt\cV$ in the notation of Section \ref{sec:con-com}) and $\wt w = \wt v+w$. $\cV$, and hence $\cV^+$, may be considered, after a proper extension, as closed subspaces of $\cD^{1,2}(\R^3,\R^3)$. Indeed, let $U$ be a bounded domain in $\r3$, $U\supset\ol\Om$. Since $\cV\subset H^1(\Om,\r3)$,  each $v\in\cV$ may be extended to $v'\in H^1_0(U,\r3)$ such that $v'|_\Om=v$. This extension is bounded as a mapping from $\cV$ to $H^1_0(U,\r3)$. Since
	$$\cV':=\big\{v'\in H^1_0(U,\R^3): v'|_{\Om}\in \cV\big\}$$
	is a closed subspace of $H^1_0(U,\R^3)$, and hence of $\cD^{1,2}(\R^3,\R^3)$, we can apply Theorem \ref{Th:Concentration} with $F$ as above and $\cV^+$ replacing $\cV$. 
The generalized Nehari manifold is now given by
 \begin{equation}\label{eq:DefOfN}
 \cN_\lambda:=\{u\in W^6_0(\curl;\Om)\setminus(\tcV\oplus \W):\; J_\lambda'(u)|_{\R u\oplus\tcV\oplus \W}=0\}.
 \end{equation}
 As in Section \ref{sec:Om=R3}, also here it is not clear whether $\cN_\lambda$ is of class $\cC^1$. Setting $m_\lambda(v^+):=v^++\wt w(v^+)$ where $v^+\in \cV^+$ and $\wt w(v^+) \equiv \wt w_\Om(v^+)$ is the minimizer as in \eqref{eq:ineqF}, we have 
 \[
m_\lambda(v^+) := t(v^+)(v^++\wt w(v^+)) \in \cN_\lambda, \quad v^+\in \cV^+\setminus\{0\}
 \]
 (cf. \eqref{eq:m}) and $J_\lambda\circ m_\lambda$ is of class $\cC^1$ on $\cS^+$. Moreover, $m_\lambda|_{\cS^+}$ is a homeomorphism between $\cS^+$ and $\cN_\lambda$. As in \eqref{gea}, we may find a Palais-Smale sequence $(v_n^+)\subset\cS^+$ such that  
 \begin{equation} \label{psseq}
 (J_\lambda\circ m_\lambda)(v_n^+)\to \inf_{\cS^+} J_\lambda\circ m_\lambda = c_\lambda \hbox{ and }J_\lambda'(m_\lambda(v_n^+))\to 0
 \end{equation}
where
 $$c_\lambda:=\inf_{\cN_\lambda}J_\lambda.$$
 Note that 
 $$c_0 = \frac13 \ol S_\curl(\Om)^{3/2} \ge \frac13 S^{3/2}.$$ 
 
\begin{Lem} \label{beta}
	Let $\lambda\in(-\lambda_\nu,-\lambda_{\nu-1}]$ for some $\nu\ge 1$. There holds
	\[
	c_\lambda \le \frac13(\lambda+\lambda_\nu)^{3/2}|\Om| \quad \text{and} \quad c_\lambda<c_0 \text{ if } \lambda<-\lambda_\nu+ \ol S_\curl(\Om)|\Om|^{-2/3}.
	\]
\end{Lem}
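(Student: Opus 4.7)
The plan is to test the mountain-pass characterization $c_\lambda = \inf_{v^+\in\cV^+\setminus\{0\}} J_\lambda(m_\lambda(v^+))$ against an $L^2$-normalized eigenfunction $e$ of the curl-curl operator at the eigenvalue $\lambda_\nu$. The hypothesis $\lambda > -\lambda_\nu$ yields $Q(e) = \lambda_\nu + \lambda > 0$, so $e$ lies in the positive-definite part $\cV^+$ and is an admissible test direction. By the construction in Section \ref{sec:bn}, $m_\lambda(e) = t(e)(e + \wt w(e))$ with $t(e) > 0$ and $\wt w(e) \in \wt\cW = \wt\cV\oplus\cW$, so $m_\lambda(e)$ decomposes as $\tau e + \wt v + w$ for some $\tau > 0$, $\wt v\in\wt\cV$, $w\in\cW$.

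The key orthogonality is that $e$ is $L^2$-orthogonal to every element of $\wt\cW$: eigenfunctions of distinct eigenvalues of the curl-curl operator on $\cV$ are $L^2$-orthogonal (so $e\perp\wt\cV$), and $\cV\perp\cW$ in $L^2$ by the Helmholtz decomposition in Subsection \ref{sectionOm}. Similarly, $e$ is $\cV$-orthogonal to $\wt\cV$, so the quadratic part of $J_\lambda$ splits: $Q(\tau e + \wt v) = \tau^2(\lambda_\nu+\lambda) + Q(\wt v)$. Dropping the nonpositive terms $Q(\wt v)\le 0$ and $\tfrac{\lambda}{2}|w|_2^2\le 0$ gives
$$J_\lambda(m_\lambda(e)) \le \tfrac12\tau^2(\lambda_\nu+\lambda) - \tfrac16|\tau e + \wt v + w|_6^6.$$
From the $L^2$-orthogonality, $|\tau e+\wt v+w|_2^2\ge \tau^2$, and H\"older's inequality on a bounded domain yields $|\tau e + \wt v + w|_6^6 \ge |\tau e+\wt v+w|_2^6/|\Om|^2 \ge \tau^6/|\Om|^2$. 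Substituting,
$$J_\lambda(m_\lambda(e)) \le \tfrac12\tau^2(\lambda_\nu+\lambda) - \tfrac{\tau^6}{6|\Om|^2} \le \max_{s>0}\Bigl(\tfrac{s^2}{2}(\lambda_\nu+\lambda)-\tfrac{s^6}{6|\Om|^2}\Bigr) = \tfrac13(\lambda_\nu+\lambda)^{3/2}|\Om|,$$
which is the first asserted inequality. The second is immediate: the strict inequality $\lambda<-\lambda_\nu+\ol S_\curl(\Om)|\Om|^{-2/3}$ rewrites as $\tfrac13(\lambda_\nu+\lambda)^{3/2}|\Om| < \tfrac13\ol S_\curl(\Om)^{3/2}=c_0$.

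The calculations above are elementary once the setup is in place; the point that I expect to warrant the most care is the structural fact that the maximizer $m_\lambda(e)\in\cN_\lambda$ has its $\cV^+$-component proportional to $e$. This is built into the definition of $m_\lambda$ and rests on the generalized Nehari manifold construction for $J_\lambda$ in the strongly indefinite setting, which itself is an adaptation of the convexity argument of Lemma \ref{lemNehariineq}. No concentration-compactness is needed for this lemma; the estimate is purely a one-eigenfunction test balanced by H\"older's inequality.
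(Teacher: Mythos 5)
Your proof is correct and follows essentially the same route as the paper: test $c_\lambda$ against the Nehari point generated by the $\lambda_\nu$-eigenfunction, use $L^2$-orthogonality and non-positivity of the remaining quadratic terms, apply H\"older on the bounded domain, and optimize the resulting one-variable function $\tfrac{s^2}{2}A-\tfrac{s^6}{6B}$. The only (immaterial) variation is that the paper keeps $\bigl(\int_\Om|u|^6\bigr)^{1/6}$ as the free parameter after bounding $|\curlop v|_2^2\le\lambda_\nu|v|_2^2\le\lambda_\nu|u|_2^2$ for the combined $\cV$-component $v=te_\nu+\wt v$, whereas you split $Q(\tau e+\wt v)$ further by orthogonality, discard $Q(\wt v)\le 0$, and keep the eigenfunction coefficient $\tau$ as the free parameter — both choices yield the identical constant $\tfrac13(\lambda+\lambda_\nu)^{3/2}|\Om|$.
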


\begin{proof}
	The first inequality has been established in \cite[Lemma 4.7]{MederskiJFA2018}. However, for the reader's convenience we include the argument. Let $e_\nu$ be an eigenvector corresponding to $\lambda_\nu$. Then $e_\nu\in\cV^+$. Choose $t>0$, $\wt v\in\wt\cV$ and $w\in\cW$ so that $u= v+w=te_\nu+\wt v+w\in\cN_\lambda$. Since $\lambda_k\le\lambda_\nu$ for $k<\nu$, 
	\begin{eqnarray*}
		c_\lambda & \le & J_\lambda(u) = \frac12\io|\curlop v|^2\,dx +\frac{\lambda}2\io|u|^2\,dx -\frac16\io|u|^6\,dx \\
		& \le & \frac{\lambda_\nu}2\io|v|^2\,dx + \frac{\lambda}2\io|u|^2\,dx -\frac16\io|u|^6\,dx \le \frac{\lambda+\lambda_\nu}2\io|u|^2\,dx -\frac16\io|u|^6\,dx \\
		& \le & \frac{\lambda+\lambda_\nu}2\,|\Om|^{2/3}\left(\io|u|^6\,dx\right)^{1/3} - \frac16\io|u|^6\,dx \le \frac13(\lambda+\lambda_\nu)^{3/2}|\Om|.
	\end{eqnarray*}
	In the last step we have used the elementary inequality $\frac A2t^2-\frac16t^6 \le \frac13A^{3/2}$ ($A>0$).
	
	 Since $c_0 = \frac13\ol S_\curl(\Om)^{3/2}$, the second inequality follows immediately.
\end{proof}

 If $c_\lambda<c_0$, then in view of \cite[Theorem 2.2 (a)]{MederskiJFA2018}  there is a Palais-Smale sequence $(u_n)\subset\cN_\lambda$ such that $J_\lambda(u_n)\to c_\lambda>0$ and $u_n\weakto u_0\neq 0$ in $W^6_0(\curl;\Om)$. It has been unclear so far whether $u_0$ is a critical point of $J_\lambda$. Now we shall show using the concentration-compactness analysis from Section \ref{sec:con-com}  that $u_0$ is not only a solution but even  a ground state for \eqref{eq:main}. The following lemma plays a crucial role.

\begin{Lem}\label{lem:ConvL2}
	If $(u_n)\subset \cN_\lambda$ is bounded, then, passing to a subsequence, $u_n\to u_0$ in $L^2(\Om,\R^3)$ for some $u_0$.
\end{Lem}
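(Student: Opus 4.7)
The plan is to apply the concentration--compactness theorem (Theorem \ref{Th:Concentration}) with $Z=\{0\}$, $\widetilde\cW=\cW$, and the convex integrand
\[
F(x,u)=\tfrac{1}{6}|u|^{6}-\tfrac{\lambda}{2}|u|^{2}.
\]
Hypotheses (F1)--(F3) are straightforward to verify for this $F$ when $\lambda\le 0$ and $\Om$ is bounded: uniform strict convexity follows from the strict convexity of $|\cdot|^{6}$ together with the convexity of $-\tfrac{\lambda}{2}|\cdot|^{2}$, the lower bound $F(x,u)\ge\tfrac{1}{6}|u|^{6}$ is immediate, and $|f(x,u)|\le|\lambda||u|+|u|^{5}$ with $a(x):=|\lambda|\in L^{3/2}(\Om)$.

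I would decompose $u_{n}=v_{n}+w_{n}\in\cV\oplus\cW$. Boundedness of $(u_{n})$ in $W_{0}^{6}(\curl;\Om)$ yields $(v_{n})$ bounded in $\cV$ and $(w_{n})$ bounded in $L^{6}(\Om,\R^{3})$. Since $\Om$ satisfies $(\Om)$, the embedding $\cV\hookrightarrow L^{2}(\Om,\R^{3})$ is compact; viewing $\cV$ as a closed subspace of $\cD^{1,2}(\R^{3},\R^{3})$ via the extension described at the start of Section \ref{sec:bn}, a standard diagonal extraction produces a subsequence along which $v_{n}\to v_{0}$ in $L^{2}(\Om,\R^{3})$, $v_{n}\weakto v_{0}$ in $\cV$, $v_{n}\to v_{0}$ a.e.\ in $\Om$, and (extending by zero outside $\Om$) the measures $|\nabla v_{n}|^{2}$ and $|v_{n}|^{6}$ converge weakly-$*$ in $\cM(\R^{3})$.

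The crucial link with Theorem \ref{Th:Concentration} is that the Nehari condition $J_{\lambda}'(u_{n})|_{\cW}=0$ kills the $\curl$-term (since $\curl\zeta=0$ for $\zeta\in\cW$) and reduces to
\[
\int_{\Om}\langle f(x,v_{n}+w_{n}),\zeta\rangle\,dx=0\quad\text{for every }\zeta\in\cW,
\]
which by \eqref{eq:eqf} identifies $w_{n}$ precisely as the minimizer $w_{\Om}(v_{n})$ in the sense of Theorem \ref{Th:Concentration} with $Z=\{0\}$. That theorem then delivers $w_{n}\to w_{0}:=w_{\Om}(v_{0})$ a.e.\ in $\Om$ along a further subsequence. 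Since $(v_{n})$ and $(w_{n})$ are bounded in $L^{6}(\Om,\R^{3})$ and $|\Om|<\infty$, the $L^{6}$-bound supplies uniform integrability in $L^{p}$ for every $p<6$, so Vitali's theorem upgrades the a.e.\ convergence of both $v_n$ and $w_n$ to convergence in $L^{2}(\Om,\R^{3})$. Setting $u_{0}:=v_{0}+w_{0}$ then gives $u_{n}\to u_{0}$ in $L^{2}(\Om,\R^{3})$.

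The main potential obstacle is the apparent mismatch between the Nehari condition, which couples the $\widetilde\cV$- and $\cW$-directions through the full $J_{\lambda}'$, and the minimization characterization \eqref{eq:eqf} in Theorem \ref{Th:Concentration}. It is resolved by the observation above: projecting the Nehari condition onto $\cW$ alone already gives a pure $\cW$-orthogonality for $f(x,u_{n})$, so it is legitimate to take $Z=\{0\}$, with the $\widetilde\cV$-component of $v_{n}$ simply absorbed into the ``$v$'' variable of the theorem.
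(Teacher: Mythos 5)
Your proof is correct, but it takes a genuinely (if only slightly) different route from the paper's. The paper decomposes $u_n\in\cN_\lambda$ as $v_n^++\wt w(v_n^+)$ along $\cV^+\oplus\wt\cW$ with $\wt\cW=\wt\cV\oplus\cW$, i.e.\ it invokes Theorem \ref{Th:Concentration} with $Z=\wt\cV$ and uses compactness of $\cV^+\hookrightarrow L^2(\Om,\R^3)$. You instead use the Helmholtz splitting $u_n=v_n+w_n\in\cV\oplus\cW$ with $Z=\{0\}$, together with the observation that for $\zeta\in\cW$ the term $\int_\Om\langle\curlop u_n,\curlop\zeta\rangle\,dx$ vanishes, so the Nehari constraint $J_\lambda'(u_n)|_\cW=0$ reduces exactly to the Euler--Lagrange characterization \eqref{eq:eqf} of $w_\Om(v_n)$ in $\cW$. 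This is a clean simplification: it lets you apply the concentration--compactness theorem without ever having to insert the finite-dimensional negative subspace $\wt\cV$ into $\wt\cW$, and it makes the identification of the $\cW$-component as the $F$-minimizer completely transparent. Your verification of (F1)--(F3) for $F(x,u)=\tfrac16|u|^6-\tfrac{\lambda}2|u|^2$ (with $\lambda\le0$ and $a(x)\equiv|\lambda|\in L^{3/2}(\Om)$) and the final Vitali upgrade from a.e.\ convergence plus the uniform $L^6$-bound on a bounded domain to $L^2(\Om,\R^3)$-convergence are both sound; the Vitali step is a bit more explicit than the paper's direct appeal to the theorem, whose stated $L^p_{loc}(\Om)$ conclusion does yield $L^p(\Om)$ when $\Om$ is bounded, as you in effect re-derive.
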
 

\begin{proof}
Let $u_n = m_\lambda(v_n^+) =  v_n^++\wt w(v_n^+)$. Since $\cV^+$ and $\wt\cW$ are complementary subspaces, $(v_n^+)$ is bounded in $\cV^+$. So passing to a subsequence, $v_n^+\rh v_0^+$ in $\cV^+$, and $v_n^+\to v_0^+$ in $L^2(\Om,\r3)$ and a.e. in $\Om$. Hence by Theorem \ref{Th:Concentration},  $\wt w(v_n^+)\to \wt w(v_0^+)$ in $L^2(\Om,\R^3)$, and therefore also $u_n\to u_0$ there.
\end{proof}

\begin{Lem}{(cf. \cite[Lemma 4.6]{MederskiJFA2018})}  \label{coercive}
$J_\lambda$ is coercive on $\cN_\lambda$.
\end{Lem}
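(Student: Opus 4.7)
The plan is to argue by the contrapositive: given a sequence $(u_n)\subset\cN_\lambda$ with $J_\lambda(u_n)\le C$, I will show that $\|u_n\|$ stays bounded. Writing $u_n = v_n + w_n \in \cV\oplus\cW$, the norm decomposes as
\[
\|u_n\|^2 = (v_n,v_n) + |w_n|_6^2 = \int_\Om|\nabla v_n|^2\,dx + |w_n|_6^2,
\]
so it suffices to bound these two pieces separately.

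The key tools will be two Nehari-type identities valid on $\cN_\lambda$, both consequences of $J_\lambda'(u)u = 0$, namely
\[
J_\lambda(u) = \tfrac13|u|_6^6 \qquad\text{and}\qquad J_\lambda(u) = \tfrac13\bigl(Q(v)+\lambda|w|_2^2\bigr),
\]
obtained from $J_\lambda(u) - \tfrac12 J_\lambda'(u)u$ and $J_\lambda(u) - \tfrac16 J_\lambda'(u)u$, respectively. I will also use the $L^2$-orthogonality $|u|_2^2 = |v|_2^2 + |w|_2^2$ of the Helmholtz decomposition and the continuous embedding $\cV\hookrightarrow H^1(\Om,\R^3)\hookrightarrow L^6(\Om,\R^3)$ granted by $(\Om)$.

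From the first identity, $|u_n|_6^6 = 3J_\lambda(u_n)$ is bounded; since $\Om$ is bounded, H\"older's inequality gives a bound on $|u_n|_2$, and the orthogonal splitting propagates this to bounds on $|v_n|_2$ and $|w_n|_2$. From the second identity, $Q(v_n) = 3J_\lambda(u_n) - \lambda|w_n|_2^2$ is bounded. Because $\lambda\le 0$, I can then recover
\[
\int_\Om|\nabla v_n|^2\,dx = Q(v_n) - \lambda|v_n|_2^2 \le Q(v_n) + |\lambda|\,|v_n|_2^2,
\]
which is bounded, so $(v_n,v_n)$ is controlled. The Sobolev embedding then bounds $|v_n|_6 \le C|\nabla v_n|_2$, and hence $|w_n|_6 \le |u_n|_6 + |v_n|_6$ is bounded as well, which closes the argument.

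The main (mild) obstacle is that for $\lambda\le 0$ the quadratic form $Q$ is only semidefinite on $\cV$ (indeed negative on the finite-dimensional $\tcV$), so a bound on $Q(v_n)$ does not by itself yield a bound on $\int|\nabla v_n|^2\,dx$. The detour via $|u_n|_6$ to first control $|v_n|_2$ and then absorb the sign-wrong term $|\lambda|\,|v_n|_2^2$ is the step where the two Nehari identities and the $L^2$-orthogonality of the Helmholtz decomposition must cooperate; this is also the reason why the argument genuinely exploits $u_n\in\cN_\lambda$ and not merely the energy bound.
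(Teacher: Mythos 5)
Your proof is correct and follows the same backbone as the paper's: bound $|u_n|_6$ via $J_\lambda(u_n)-\tfrac12 J_\lambda'(u_n)u_n=\tfrac13|u_n|_6^6$, deduce $|u_n|_2$ bounded on the bounded domain, then use the decomposition of $J_\lambda$ to extract a bound on the $\cV$-component. The small point of departure is how the semidefiniteness of $Q$ on $\cV$ is handled. The paper works with the finer split $\cV=\cV^+\oplus\wt\cV$, writes $J_\lambda(u_n)=\tfrac12Q(v_n^+)+\tfrac12Q(\wt v_n)+\tfrac{\lambda}2|w_n|_2^2-\tfrac16|u_n|_6^6$, observes that the last three terms are bounded (using $\dim\wt\cV<\infty$ and the $|u_n|_2$ bound) and then invokes positive definiteness of $Q$ on $\cV^+$. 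You instead stay with $\cV\oplus\cW$, derive the second Nehari identity $J_\lambda(u)=\tfrac13\bigl(Q(v)+\lambda|w|_2^2\bigr)$, and absorb the sign-wrong piece $\lambda|v_n|_2^2$ in $\int_\Om|\nabla v_n|^2=Q(v_n)-\lambda|v_n|_2^2$ once $|v_n|_2$ is controlled by $L^2$-orthogonality. Your route is slightly more self-contained in that it does not require knowing the eigenvalue decomposition of the curl-curl operator or the positive-definiteness estimate on $\cV^+$; both versions rest on the same two ingredients, namely the Nehari relation and the $L^2$-orthogonality of the Helmholtz decomposition, so the difference is cosmetic rather than conceptual.
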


\begin{proof}
Let $(u_n)$ be a sequence in $\cN_\lambda$ such that $J_\lambda(u_n)\le d$. Then
\[
d\ge J_\lambda(u_n) = J_\lambda(u_n) - \frac12 J_\lambda'(u_n)u_n = \frac13\io|u_n|^6\,dx, 
\]
hence $(u_n)$ is bounded in $L^6(\Om,\r3)$, and therefore also in $L^2(\Om,\r3)$. It follows that  
\[
d\ge J_\lambda(u_n) = \frac12Q(v_n^+) + \frac12Q(\wt v_n) + \frac{\lambda}2\io|w_n|^2\,dx - \frac16\io|u_n|^6\,dx 
\]
where the last three terms are bounded (recall $\text{dim}\,\wt\cV<\infty$). Hence also $(v_n^+)$ is bounded. 
\end{proof}

 Let 
\[
N(u) := |u|^4u.
\]
 It is clear that $N: L^6(\Om,\r3) \to L^{6/5}(\Om,\r3)$. We shall need the following version of the Brezis-Lieb lemma:

\begin{Lem} \label{bl}
Suppose $(u_n)$ is bounded in $L^6(\Om,\r3)$ and $u_n\to u$ a.e. in $\Om$. Then 
\[
N(u_n)-N(u_n-u)\to N(u) \quad \text{in } L^{6/5}(\Om,\r3) \text{ as } n\to \infty.
\]
\end{Lem}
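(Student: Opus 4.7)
The plan is to adapt the original Brezis--Lieb scheme \cite{BrezisLieb} to the vector-valued nonlinearity $N(u)=|u|^{4}u$ acting on $\R^3$-valued functions, using the critical Sobolev exponent $6$ on the ``input side'' and the dual exponent $6/5$ on the ``output side''. The main ingredient is a pointwise algebraic inequality, and once it is in hand the argument reduces to Lebesgue's dominated convergence theorem.

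First I would establish the following vector-valued inequality: for every $\varepsilon>0$ there exists $C_\varepsilon>0$ such that
\begin{equation*}
\bigl|N(a+b)-N(a)-N(b)\bigr|^{6/5} \le \varepsilon\,|a|^{6} + C_\varepsilon\,|b|^{6}\qquad\text{for all }a,b\in\R^{3}.
\end{equation*}
Since $N\in \cC^{1}(\R^{3},\R^{3})$ with $|DN(x)|\le C|x|^{4}$, the mean value theorem gives $|N(a+b)-N(a)|\le C(|a|^{4}+|b|^{4})|b|$, and combining this with the analogous bound $|N(b)|=|b|^{5}$ yields
$|N(a+b)-N(a)-N(b)|\le C\bigl(|a|^{4}|b|+|b|^{5}\bigr)$.
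Applying Young's inequality $|a|^{4}|b|\le \eta|a|^{5}+C_{\eta}|b|^{5}$ and then raising to the power $6/5$ produces the stated estimate. This is the step I expect to be the main technical obstacle, because the $\R^{3}$-valued nature of $N$ prevents a direct appeal to the scalar Brezis--Lieb inequality; however, the homogeneity of $N$ and Young's inequality with parameter make the argument routine.

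Next I would apply the inequality with $a=u_n-u$ and $b=u$ to get
\begin{equation*}
\bigl|N(u_n)-N(u_n-u)-N(u)\bigr|^{6/5} \le \varepsilon\,|u_n-u|^{6} + C_\varepsilon\,|u|^{6}\qquad\text{a.e.\ in }\Om.
\end{equation*}
Set
\begin{equation*}
h_{n,\varepsilon}:=\bigl(\,\bigl|N(u_n)-N(u_n-u)-N(u)\bigr|^{6/5} - \varepsilon\,|u_n-u|^{6}\bigr)_{+}.
\end{equation*}
Then $0\le h_{n,\varepsilon}\le C_\varepsilon|u|^{6}$. Since $u_n\to u$ a.e.\ and Fatou's lemma ensures $u\in L^{6}(\Om,\R^{3})$ with $|u|^{6}$ integrable, the majorant is in $L^{1}(\Om)$. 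By continuity of $N$, $h_{n,\varepsilon}\to 0$ a.e.\ in $\Om$, and dominated convergence gives $\io h_{n,\varepsilon}\,dx\to 0$.

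Finally, using $(u_n)$ bounded in $L^{6}$ (so that $M:=\sup_n |u_n-u|_{6}^{6}<\infty$), I obtain
\begin{equation*}
\io \bigl|N(u_n)-N(u_n-u)-N(u)\bigr|^{6/5}\,dx \le \io h_{n,\varepsilon}\,dx + \varepsilon M.
\end{equation*}
Letting first $n\to\infty$ and then $\varepsilon\to 0$ yields $N(u_n)-N(u_n-u)\to N(u)$ in $L^{6/5}(\Om,\R^{3})$, as required.
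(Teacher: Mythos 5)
Your proof is correct, but it takes a genuinely different route from the paper. The paper first observes that a.e.\ convergence together with $L^{6/5}$-boundedness gives $N(u_n)-N(u_n-u)\rh N(u)$ weakly in $L^{6/5}(\Om,\r3)$, and then upgrades this to strong convergence by establishing that the norms converge, $|N(u_n)-N(u_n-u)|_{6/5}\to|N(u)|_{6/5}$, and invoking the uniform convexity of $L^{6/5}$ (weak convergence plus norm convergence implies strong convergence). The norm convergence in turn is obtained by exploiting the pure power structure of $N$ through the identity $|N(v)|^{6/5}=|v|^6$, representing the relevant quantity as $\int_0^1\frac{d}{dt}|u_n+(t-1)u|^6\,dt$ and passing to the limit with Vitali's theorem. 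Your argument instead follows the original Brezis--Lieb scheme directly on the nonlinearity: you prove the pointwise inequality $|N(a+b)-N(a)-N(b)|^{6/5}\le\eps|a|^{6}+C_\eps|b|^{6}$ by combining the mean value estimate $|DN(x)|\le C|x|^{4}$ with Young's inequality, and then conclude by dominated convergence applied to the truncated majorant $h_{n,\eps}$. Both are valid. The paper's route is compact precisely because $|N(v)|^{6/5}=|v|^6$ turns a vector-valued question into a scalar one, but it leans on weak-convergence and convexity machinery; your route is more elementary and self-contained, requires no duality or uniform convexity, and transfers essentially verbatim to other $\cC^1$ nonlinearities satisfying a power-type gradient bound.
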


\begin{proof}
Since $N(u_n)-N(u_n-u)\to N(u)$ a.e. in $\Om$ and $N(u_n)-N(u_n-u)$ is bounded in $L^{6/5}(\Om,\r3)$,  $N(u_n)-N(u_n-u)\rh N(u)$. We claim that $|N(u_n)-N(u_n-u)|_{6/5}\to |N(u)|_{6/5}$. Using Vitali's convergence theorem we obtain
\begin{gather*}
\io \left| |u_n|^4u_n-|u_n-u|^4(u_n-u)\right|^{6/5}\,dx = \io\int_0^1\frac d{dt}\left| |u_n+(t-1)u|^4(u_n+(t-1)u\right|^{6/5}\,dtdx \\
= \io\int_0^1\frac d{dt}|u_n+(t-1)u|^6\,dtdx = 6\int_0^1\io\langle |u_n+(t-1)u|^4(u_n+(t-1)u),\, u\rangle \,dxdt \\
\to 6\int_0^1\io t^5|u|^6\,dxdt = \io|u|^6\,dx.
\end{gather*}
Hence $N(u_n)-N(u_n-u)$ converges strongly to $N(u)$.
\end{proof}

\begin{Lem}\label{LemPScond}
	Let $\beta<c_0$. Then
	$J_\lambda$ satisfies the $(PS)_\beta$-condition in $\cN_\lambda$, i.e. if $(u_n)\subset \cN_\lambda$, $J_\lambda(u_n)\to\beta$ and $J'_\lambda(u_n)\to 0$ as $n\to\infty$, then $u_n\to u_0\ne 0$ in $W_0^6(\curl;\Om)$ along a subsequence. In particular, $u_0$ is a nontrivial solution for \eqref{eq:main}--\eqref{eq:bc}.
\end{Lem}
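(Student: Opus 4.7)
My approach follows the standard concentration--compactness scheme: extract a weak limit $u_0$ along a subsequence, identify $u_0$ as a weak solution via Theorem \ref{Th:Concentration}, and then upgrade to strong convergence by ruling out concentration through the hypothesis $\beta<c_0$ and the curl--Sobolev inequality \eqref{eq:neqOm}.

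First, by Lemma \ref{coercive} the sequence $(u_n)$ is bounded in $W_0^6(\curl;\Om)$; after extracting a subsequence, Lemma \ref{lem:ConvL2} gives $u_n\rh u_0$ in $W_0^6(\curl;\Om)$ together with $u_n\to u_0$ in $L^2(\Om,\R^3)$. Decompose $u_n=v_n+w_n\in\cV\oplus\cW$. Under $(\Om)$, the compact embedding $\cV\hookrightarrow L^p(\Om,\R^3)$ for $1\le p<6$ yields $v_n\to v_0$ a.e. The $\cN_\lambda$-relation $J_\lambda'(u_n)|_\cW=0$ reads $\io\langle|u_n|^4u_n-\lambda u_n,\,\zeta\rangle\,dx=0$ for every $\zeta\in\cW$, which, in view of \eqref{eq:eqf} applied with $F(x,u):=\tfrac16|u|^6-\tfrac{\lambda}{2}|u|^2$, $Z=\{0\}$ and $\wt\cW=\cW$, is precisely the Euler--Lagrange equation identifying $w_n$ as the unique minimizer of \eqref{eq:ineqF} with parameter $v_n$. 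Theorem \ref{Th:Concentration} then produces $w_n\to w_0$ a.e.\ in $\Om$, whence $u_n\to u_0$ a.e. Combining a.e.\ and weak convergence with the $L^{6/5}$-boundedness of $|u_n|^4u_n$, one passes to the limit in $J_\lambda'(u_n)\zeta\to 0$ to conclude $J_\lambda'(u_0)=0$; in particular $J_\lambda(u_0)=\tfrac13|u_0|_6^6\ge 0$.

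To upgrade to strong convergence, put $z_n:=u_n-u_0$, so $z_n\rh 0$ in $W_0^6(\curl;\Om)$ and $z_n\to 0$ in $L^2(\Om,\R^3)$. Lemma \ref{bl} together with the usual Brezis--Lieb splitting yield $J_\lambda(z_n)\to\beta-J_\lambda(u_0)$ and $J_\lambda'(z_n)\to 0$ in $(W_0^6(\curl;\Om))^*$. Testing $J_\lambda'(z_n)$ on $z_n$ together with $|z_n|_2\to 0$ gives $|\curl z_n|_2^2-|z_n|_6^6\to 0$; let $L\ge 0$ denote the common limit, so that $L=3(\beta-J_\lambda(u_0))$. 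Suppose $L>0$. Evaluating $J_\lambda'(z_n)$ on $w_\Om(z_n)\in\cW$, combining with the minimizer identity $\io\langle|z_n+w_\Om(z_n)|^4(z_n+w_\Om(z_n)),\,w_\Om(z_n)\rangle\,dx=0$ and the classical uniform monotonicity
\[
\langle|a|^4a-|b|^4b,\,a-b\rangle\ge c\,|a-b|^6,\qquad a,b\in\R^3,
\]
forces $|w_\Om(z_n)|_6\to 0$. Hence $|z_n+w_\Om(z_n)|_6\to L^{1/6}$, and \eqref{eq:neqOm} gives $L\ge\ol S_\curl(\Om)\,L^{1/3}$, i.e.\ $L\ge\ol S_\curl(\Om)^{3/2}=3c_0$. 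This yields $\beta\ge J_\lambda(u_0)+c_0\ge c_0$, contradicting $\beta<c_0$. So $L=0$ and $u_n\to u_0$ strongly in $W_0^6(\curl;\Om)$. Since $\beta=J_\lambda(u_0)=\tfrac13|u_0|_6^6$ and $\beta\ge c_\lambda>0$, the limit $u_0$ is nontrivial.

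The most delicate point is Step~1: verifying that the $\cW$-component $w_n$ of $u_n$ is exactly the variational minimizer required by Theorem \ref{Th:Concentration}. This works because the $\cN_\lambda$-condition restricted to $\cW$ (where the $\curl$-term vanishes) coincides with the Euler--Lagrange equation for $F$ once one takes $Z=\{0\}$ and $\wt\cW=\cW$; splitting off $\wt\cV$ as well would introduce a spurious $\io\langle\curl\wt v_n,\curl\wt\xi\rangle$ term and destroy the identification.
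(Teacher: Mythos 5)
Your proof is correct, and in two places it departs from the paper's argument in a substantive way. First, to identify the $\cW$-component of $u_n$ as the variational minimizer needed for Theorem \ref{Th:Concentration}, you decompose $u_n=v_n+w_n\in\cV\oplus\cW$ and take $Z=\{0\}$, whereas the paper works with $u_n=v_n^++\wt w(v_n^+)\in\cV^+\oplus\wt\cW$ and $Z=\wt\cV$. Your closing remark is on point: on $\cW$ the curl term drops out and the Nehari condition $J_\lambda'(u_n)|_\cW=0$ coincides exactly with the Euler--Lagrange equation \eqref{eq:eqf} for the $F$-minimizer, while on $\wt\cV$ the additional $\int_\Om\langle\curlop\wt v_n,\curlop\zeta\rangle\,dx$ term is present and the identification is no longer immediate. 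Your choice makes the application of Theorem \ref{Th:Concentration} cleaner and essentially re-derives Lemma \ref{lem:ConvL2} along the way (a.e.\ convergence plus $L^6$-boundedness on the bounded $\Om$ give $L^2(\Om)$ convergence via Vitali). Second, for the strong-convergence upgrade the paper sets $w_n:=w(u_n-u_0)$, rescales $t_n(u_n-u_0+w_n)$ onto $\cN_0$, and uses the energy comparison Lemma \ref{lemNehariineq} to force $\beta\ge c_0$. You instead test $J_\lambda'(z_n)$ against $w_\Om(z_n)$, invoke the uniform $p=6$ monotonicity $\langle|a|^4a-|b|^4b,a-b\rangle\ge c|a-b|^6$ to deduce $|w_\Om(z_n)|_6\to 0$, and then appeal directly to \eqref{eq:neqOm} to obtain $L\ge\ol S_\curl(\Om)^{3/2}=3c_0$. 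This reaches the same contradiction $\beta\ge J_\lambda(u_0)+c_0\ge c_0$ while bypassing the projection onto $\cN_0$ and Lemma \ref{lemNehariineq} entirely; the trade-off is that you need the elementary monotonicity inequality instead, which is a reasonable swap and arguably makes the concentration mechanism more transparent.
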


\begin{proof}
Let $(u_n)$ be a $(PS)_\beta$-sequence such that $(u_n)\subset \cN_\lambda$. According to Lemma \ref{coercive}, $(u_n)$ is bounded and we may assume $u_n\weakto u_0$ in $W_0^6(\curl;\Om)$. By Lemma \ref{lem:ConvL2}, $u_n\to u_0$ in $L^{2}(\Om,\R^3)$ and hence also a.e. in $\Om$ after passing to a subsequence if necessary. As in the proof of Theorem \ref{Th:main1} in Section \ref{sec:Om=R3} we see  that $J'_\lambda(u_0)=0$, i.e. $u_0$ is a solution for \eqref{eq:main}--\eqref{eq:bc}. According to the Brezis-Lieb lemma \cite{BrezisLieb},
	$$
	\lim_{n\to\infty}\Big(\int_{\Om} |u_n|^{6} \,dx -
	\int_{\Om} |u_n-u_0|^{6} \,dx\Big)
	=\int_{\Om} |u_0|^{6} \,dx,$$
	hence
	\begin{equation}\label{eq:PS1}
	\lim_{n\to\infty}\big(J_\lambda(u_n)-J_\lambda(u_n-u_0)\big)=J_\lambda(u_0)\geq 0,
	\end{equation}
	and by Lemma \ref{bl}, 
	\begin{equation}\label{eq:PS22}
	\lim_{n\to\infty}\big(J_{\lambda}'(u_n)-J_{\lambda}'(u_n-u_0)\big)=J_{\lambda}'(u_0) = 0.
	\end{equation}
	Since $J_\lambda'(u_n)\to 0$ and $u_n\to u_0$ in $L^2(\Om,\R^3)$, 
	\begin{equation}\label{eq:PS2}
	\lim_{n\to\infty} J_{0}'(u_n-u_0) = 0.
	\end{equation}
	Suppose  $\liminf_{n\to\infty}\|u_n-u_0\|>0$. Since $\lim_{n\to\infty}J_{0}'(u_n-u_0)(u_n-u_0)=0$, we infer that 
	$$\liminf_{n\to\infty}|\curlop(u_n-u_0)|_2>0.$$
	Let $u_n-u_0=v_n+\wt w_n\in \cV\oplus\cW$ according to the Helmholtz decomposition in $W_0^6(\curl;\Om)$. If $v_n\to 0$ in $L^6(\Om,\R^3)$, then by \eqref{eq:PS2} we have
	$J_{0}'(u_n-u_0)v_n\to 0$, thus
	\[
|\curlop (u_n-u_0)|_2^2 = |\curlop v_n|_2^2=J_{0}'(u_n-u_0)v_n+\int_{\Om}\langle|u_n-u_0|^4(u_n-u_0),v_n\rangle\,dx \to 0
	\]
as $n\to\infty$ which is a contradiction. Therefore $|v_n|_6$ is bounded away from $0$. Put $w_n:= w(u_n-u_0)\in\cW$. Then $(w_n)$ is bounded and since $u_n-u_0+w_n=v_n+w(v_n)\in\cV\oplus\cW$, $|u_n-u_0+w_n|_6$ is bounded away from 0. Choose $t_n$ so that $t_n(u_n-u_0+w_n)\in \cN_0$   ($\cN_0\equiv\cN_\Om$ in the notation of Section \ref{sec:nonex}). As in \eqref{tn} we have
\[
t_n^2 = \frac{|\nabla\times (u_n-u_0)|_2}{|u_n-u_0+w_n|_6^3},
\]
so $(t_n)$ is bounded.
	Using Lemma \ref{lemNehariineq}, as in the proof of Theorems \ref{TheoremS_curl} and \ref{th:Sbar} we get 
	\begin{equation*}\label{eq:PS3}
	J_{0}(u_n-u_0)\geq J_{0}(t_n(u_n-u_0+w_n))-J_{0}'(u_n-u_0)\Big[\frac{t_n^2-1}{2}(u_n-u_0)+t_n^2w_n\Big],
	\end{equation*}
	so by \eqref{eq:PS2} and since $u_n\to u_0$ in $L^2(\Om,\r3)$,   
		$$\beta =  \lim_{n\to\infty}J_\lambda(u_n-u_0)=
	\lim_{n\to\infty}J_{0}(u_n-u_0)\geq \lim_{n\to\infty}J_{0}(t_n(u_n-u_0+w_n))\geq c_0,$$
	 a contradiction. Therefore, passing to a subsequence, $u_n\to u_0$. Since $u_0\in\cN_\lambda$, $u_0\ne 0$. 
\end{proof}

\begin{altproof}{Theorem \ref{thm:mainBN}}
	(a) It follows from \eqref{psseq} and Lemma \ref{LemPScond} that if $c_\lambda<c_0$, then $c_\lambda$ is attained and hence there exists a ground state solution. By Lemma \ref{beta}, this inequality is satisfied whenever $\lambda\le \lambda_{\nu-1}$ and $\lambda\in (-\lambda_\nu,-\lambda_\nu+ \ol S_\curl(\Om)|\Om|^{-2/3})$.
	
 In view of \cite[Theorem 2.2(b)]{MederskiJFA2018}, the function $(-\lambda_{\nu},-\lambda_{\nu-1}]\ni\lambda\mapsto c_\lambda\in (0,+\infty)$ is non-decreasing, continuous and $c_\lambda\to 0$ as $\lambda\to -\lambda_\nu^- $, and if $c_{\mu_1}=c_{\mu_2}$  for some $-\lambda_{\nu}<\mu_1<\mu_2\leq-\lambda_{\nu-1}$, then 
$c_\lambda$ is not attained for $\lambda\in (\mu_1,\mu_2]$. Hence (b) and (c) follow.

 (d) Since $J_\lambda$ is even and, by Lemma \ref{LemPScond}, satisfies the Palais-Smale condition in $\cN_\lambda$ at any level below $c_0$, then, in view of \cite[Theorem 3.2(c)]{MederskiJFA2018}, $J_\lambda$ has at least $m(\cN_\lambda,c_0)$ pairs of critical points $\pm u$ such that $u\neq 0$ and $c_\lambda\leq J_\lambda(u)<c_0$ where
\begin{equation}\label{eq:DefOfm(N,beta)}
m(\cN_\lambda,c_0):=\sup\{\gamma(J_\lambda^{-1}((0,\beta])\cap \cN_\lambda):  \beta<c_0\}
\end{equation}
and $\gamma$ is the Krasnoselskii genus \cite{Struwe}. This is a consequence of the standard fact that if 
\[
\beta_k := \inf\{\beta\in\R: \gamma(J_\lambda^{-1}((0,\beta])\cap \cN_\lambda) \ge k\},
\]
then there are at least as many pairs of critical points as the number of $k$ for which $(PS)_{\beta_k}$ holds, see e.g. \cite{Struwe}.

 In order to complete the proof we show that 
$$m(\cN_\lambda,c_0)\geq \wt{M}(\lambda) := \#\Big\{k: -\lambda_k <\lambda <-\lambda_k +\frac13\ol S_\curl(\Om)|\Om|^{-\frac23}\Big\}.$$
Let
$$A(\lambda):=\big\{k\geq 1: -\lambda_k < \lambda <-\lambda_k +\frac13\ol S_\curl(\Om)|\Om|^{-\frac23}\hbox{ and }\lambda_k>\lambda_{k-1}\}$$
and observe that 
$$\wt{M}(\lambda)=\sum_{k\in A(\lambda)}m(\lambda_k),$$
where $m(\lambda_k)$ stands for the multiplicity of $\lambda_k$.
For $k\in A(\lambda)$, let $\V(\lambda_k)$ denote the eigenspace corresponding to $\lambda_k$. Then $\textrm{dim}\,\V(\lambda_k)=m(\lambda_k)$.
Let 
$S(\lambda)$
be the unit sphere in $\bigoplus_{k\in A(\lambda)}\V(\lambda_k)\subset \V^{+}$. Recall that $m_{\lambda}|_{\cS^+}$ is a homeomorphism from $\cS^+$ to $\cN_\lambda$. 
Since  $J_\lambda$ is even,  $m_{\lambda}$ is odd. Similarly as in Lemma \ref{beta} we show that for $u\in S(\lambda)$
$$J_\lambda(m_\lambda(u))\leq  \max_{k\in A(\lambda)}\frac13(\lambda+\lambda_{k})^{\frac{3}{2}}|\Om|=:\beta$$
and thus 
$m_\lambda(S(\lambda))\subset J_\lambda^{-1}((0,\beta])\cap \cN_\lambda$.
Hence
$$\gamma(J_\lambda^{-1}(0,\beta])\cap \cN_\lambda)\geq 
\gamma(S(\lambda))=\wt{m}_\lambda.$$
Since $\lambda <-\lambda_k +\frac13\ol S_\curl(\Om)|\Om|^{-\frac23}$ (cf. Lemma \ref{beta}), we have $\beta<c_0$ and it follows that
$m(\cN_\lambda,c_0)\geq \wt{M}(\lambda)$ which completes the proof.
\end{altproof}

\section{Open problems} \label{op}

In this section we state some open problems. Some of them have already been mentioned earlier. 

\begin{itemize}
	\item[(P1)] Does there exist a ground state solution $u$ whose support is a proper subset of $\r3$? In particular, can a ground state have compact support? 
	\item[(P2)]  Can one find an explicit expression for a ground state? Or at least, what can be said about the decay properties of ground states? If they are the same as for the Aubin-Talenti instantons, then one could hopefully retrieve the formulas in the middle of p. 35 in \cite{Willem} which could be useful when looking for ground states for \eqref{eq} with the right-hand side $|u|^4u+g(x,u)$ where $g$ is a monotone lower order term. 
	\item[(P3)] 
	Do the ground state solutions to \eqref{eq} have any symmetry properties? How regular are they?
	\item[(P4)] 
	If $\Om$ is a bounded domain which is neither convex nor has  $\cC^{1,1}$ boundary, then $\cV\subset H^s(\Om,\r3)$ where $s\in[1/2,1]$ and $s$ may be strictly less than 1, see Subsection 2.2 and \cite{Costabel}. Note that the critical exponent for $H^s$ is $6/(3-2s)<6$ if $s<1$. Do the results of Theorem \ref{thm:mainBN} remain valid (with the same right-hand side)? Here the boundary condition \eqref{eq:bc} should be understood in a generalized sense, i.e.  $u$ should be in $W_0^6(\curl;\Om)$.
	\item[(P5)] Can the inequality $S_\curl\geq  \ol S_\curl(\Om)\geq S$ be sharpened? Do there exist domains as in (P4) for which $\ol S_\curl(\Om)<S$?
\end{itemize}

\medskip

{\bf Acknowledgements.}
The authors would like to thank the referee for useful remarks.
J. Mederski was partially supported by the National Science Centre, Poland (Grant No. 2017/26/E/ST1/00817). He was also partially supported by the Alexander von Humboldt Foundation (Germany) and by the Deutsche Forschungsgemeinschaft (DFG, German Research Foundation) - Project ID 258734477 - SFB 1173 during the stay at Karlsruhe Institute of Technology.

{\bf Compliance with Ethical Standards.} The authors declare that they have no conflict of
interests, they also confirm that the manuscript complies to the Ethical Rules applicable for this
journal.


\begin{thebibliography}{99}
\baselineskip 2 mm

\bibitem{Agrawal} G. Agrawal: {\em Nonlinear Fiber Optics}, Academic Press 2013, 5-th Edition.

\bibitem{Akhmediev-etal} N.N. Akhmediev, A. Ankiewicz, J.M. Soto-Crespo: {\em Does the nonlinear Schr\"odinger equation correctly describe beam propagation?}, Opt. Lett. {\bf 18} (1993), 411.

\bibitem{Amrouche}  C. Amrouche, C. Bernardi, M. Dauge, V. Girault:
{\em Vector potentials in three-dimensional non-smooth domains},
Math. Methods Appl. Sci. {\bf 21} (1998), no. 9, 823--864.
	
\bibitem{Aubin} T. Aubin: {\em \'Equations diff\'erentielles non lin\'eaires et probl\'eme de Yamabe concernant la courbure scalaire}, J. Math. Pures Appl. (9) {\bf 55} (1976), no. 3, 269--296.

\bibitem{Bartsch} T. Bartsch, T. Dohnal, M. Plum, W. Reichel:
\emph{Ground states of a nonlinear curl-curl problem in cylindrically symmetric media}, Nonlinear Diff. Eq. Appl. \textbf{23:52} (2016), 34 pp.

\bibitem{BartschMederski1} T. Bartsch, J. Mederski:
{\em Ground and bound state solutions of semilinear time-harmonic Maxwell equations in a bounded domain}, Arch. Rational Mech. Anal., 215 (1), (2015), 283--306.

\bibitem{BartschMederski2} T. Bartsch, J. Mederski:
{\em Nonlinear time-harmonic Maxwell equations in an anisotropic bounded medium},
J. Funct. Anal. {\bf 272} (2017), no. 10, 4304--4333.

\bibitem{BenFor} V. Benci, D. Fortunato: {\em Towards a unified field theory for classical electrodynamics}, Arch. Rat. Mech. Anal. {\bf 173} (2004), 379--414.
	
\bibitem{BrezisLieb} H. Br\'ezis, E. Lieb: {\em A relation between pointwise convergence of functions and convergence of functionals}, Proc. Amer. Math. Soc. {\bf 88} (1983), no. 3, 486--490.

\bibitem{BrezisNirenberg} H. Brezis, L. Nirenberg: {\em Positive solutions of nonlinear elliptic equations involving critical Sobolev exponents}, Comm. Pure Appl. Math. {\bf 36} (1983), 437--477.

\bibitem{Ciattoni-etal:2005} A. Ciattoni, B. Crossignani, P. Di Porto, A. Yariv: {\em Perfect optical solitons: spatial Kerr solitons as exact solutions of Maxwell's equations}, J. Opt. Soc. Am. B {\bf 22} (2005), 1384--94.
	
\bibitem{Costabel} M. Costabel: {\em A remark on the regularity of solutions of Maxwell's equations on Lipschitz domains},
Math. Methods Appl. Sci. {\bf 12}, (1990), 365--368.

\bibitem{Dautray} R. Dautray, J.L. Lions: \emph{Mathematical Analysis and Numerical Methods for Science and Technology, Vol. 3}, Springer 1990.

\bibitem{Desyatnikov} A. Desyatnikov, A. Maimistov, B.  Malomed: {\em Three-dimensional spinning solitons in dispersive media with the cubic-quintic nonlinearity}, Phys. Rev. E. {\bf 61}(3), (2000), 3107--3113.

\bibitem{Doerfler} W. D\"orfler, A. Lechleiter, M. Plum, G. Schneider, C. Wieners:
{\em Photonic Crystals: Mathematical Analysis and Numerical Approximation},
Springer 2012.

\bibitem{ev} L. C. Evans: \emph{Weak convergence methods for nonlinear partial differential equations}, CBMS Regional Conference Series in Mathematics \# 74, AMS, Providence, RI 1990.

\bibitem{Girault} V. Girault, P.-A. Raviart: {\em Finite Element Methods for Navier-Stokes Equations: Theory and Algorithms}, Springer 1986.

\bibitem{KirschHettlich} A. Kirsch, F. Hettlich: {\em The Mathematical Theory of Time-Harmonic Maxwell's Equations: Expansion-, Integral-, and Variational Methods}, Springer 2015.

\bibitem{Le} H. Leinfelder: \emph{Gauge invariance of Schr\"odinger operators and related spectral properties}, J. Operator Theory 9 (1983), 163--179.

\bibitem{Lieb} E.H. Lieb: {\em Sharp constants in the Hardy-Littlewood-Sobolev and related inequalities}, Ann. of Math. (2) {\bf 118} (1983), no. 2, 349--374.

\bibitem{Lions85} P.L. Lions: {\em The concentration-compactness principle in the calculus of variations. The limit case. Part I and II.}, Rev. Mat. Iberoamer. {\bf 1} (1985), no. 1, 145--201 and no. 2, 45--121.

\bibitem{Mederski} J. Mederski: {\em Ground states of time-harmonic semilinear Maxwell equations in $\R^3$ with vanishing permittivity}, Arch. Rational Mech. Anal. {\bf 218} (2), (2015), 825--861.

\bibitem{MederskiJFA2018} J. Mederski: {\em The Brezis-Nirenberg problem for the curl-curl operator}, J. Funct. Anal. {\bf 274} (5) (2018), 1345--1380.

\bibitem{MSchSz} J. Mederski, J. Schino, A. Szulkin: {\em Multiple solutions to a nonlinear curl-curl problem in $\R^3$}, Arch. Rational Mech. Anal. 236 (1) (2020), 253--288.

\bibitem{Mihalache} D. Mihalache, D. Mazilu, L.-C. Crasovan, I. Towers, A.V. Buryak, B.A. Malomed, L. Torner: {\em Stable spinning solitons in three dimensions}, Phys. Rev. Lett. {\bf 88} (7), 4 (2002).

\bibitem{Monk} P. Monk:
{\em Finite Element Methods for Maxwell's Equations},
Oxford University Press 2003.

\bibitem{Nehari2} Z. Nehari: {\em Characteristic values associated with a class of non-linear second-order differential equations}, Acta Math. {\bf 105} (1961), 141--175.

\bibitem{Pankov} A. Pankov: {\em Periodic nonlinear Schr\"odinger equation with application to photonic crystals}, Milan J. Math. {\bf 73} (2005), 259--287.

\bibitem{Solimini} S. Solimini: {\em A note on compactness-type properties with respect to Lorentz norms of bounded subsets of a Sobolev space}, Ann. Inst. H. Poincar\'e Anal. Non Lin\'eaire {\bf 12} (1995), no. 3, 319--337. 

\bibitem{Struwe} M. Struwe: {\em Variational Methods}, Springer 2008.

\bibitem{Stuart:1993} C. A. Stuart: {\em Guidance properties of nonlinear planar waveguides}, Arch. Rational Mech. Anal. {\bf 125} (1993), no. 1, 145--200.

\bibitem{SzulkinWeth} A. Szulkin, T. Weth: {\em Ground state solutions for some indefinite variational problems}, J. Funct. Anal. {\bf 257} (2009), no. 12, 3802--3822. 

\bibitem{Talenti} G. Talenti: {\em Best constant in Sobolev inequality}, Ann. Mat. Pura Appl. (4) {\bf 110} (1976), 353--372.

\bibitem{Tintarev}  K. Tintarev, K.-H. Fieseler: {\em Concentration Compactness: Functional-analytic Grounds And Applications}, Imperial College Press 2007.

\bibitem{wa} S. Waliullah: \emph{Minimizers and symmetric minimizers for problems with critical Sobolev exponent}, Topol. Meth. Nonl. Anal. 34 (2009), 291--326.

\bibitem{Willem} M. Willem: {\em Minimax Theorems}, Birkh\"auser 1996.

\end{thebibliography}
\end{document}